\documentclass[12pt,a4paper, reqno]{amsart}
\usepackage{amsmath,verbatim,amsthm,amssymb,accents,tensor,slashed}
\usepackage{tikz-cd}
\usepackage[all]{xy}
\usepackage{comment}
\usepackage{fullpage}
\newtheorem{theorem}{Theorem}[section]
\newtheorem{proposition}[theorem]{Proposition}

\newtheorem{corollary}[theorem]{Corollary} 
 
\newtheorem{lemma}[theorem]{Lemma} 

\theoremstyle{definition} 
\numberwithin{equation}{section}
\renewcommand\({\Big(} 
\renewcommand\){\Big)} 
\newcommand\<{\langle} 
\renewcommand\>{\rangle}

\newcommand\er{\eqref}
\newcommand\be[1]{\begin{equation}\label{#1}}
\newcommand\ee{\end{equation}} 
\newcommand\bm{\begin{pmatrix}}
\renewcommand\em{\end{pmatrix}}
\newcommand\bd{\begin{vmatrix}}
\newcommand\ed{\end{vmatrix}}
\renewcommand\i[1]{\item{#1}}

\newcommand\I{\int\limits}
\renewcommand\S{\sum\limits}
\renewcommand\P{\prod\limits}
\newcommand\U{\bigcup\limits}
\newcommand\C{\bigcap\limits}

\newcommand\dl{\partial}

\newcommand\op{\oplus}

\newcommand\oc{\circ}

\newcommand\oo{\infty}
\newcommand\iu{\cup}
\newcommand\ui{\cap}
\newcommand\ic{\subset}
\newcommand\ci{\supset}

\newcommand\xx{\times}

\newcommand\xt{\otimes}

\newcommand\ap{\approx}
\newcommand\qi{\mapsto}

\renewcommand\le{\leqslant}
\renewcommand\ge{\geqslant}

\newcommand\sm{\setminus}

\renewcommand\.{\cdot}

\renewcommand\;{\ldots}
\newcommand\la{\alpha}
\newcommand\lb{\beta}

\newcommand\lc{\chi}
\renewcommand\lg{\gamma}
\newcommand\lG{\Gamma}
\newcommand\ld{\delta}
\newcommand\lD{\Delta}
\newcommand\Le{\varepsilon}
\newcommand\lh{\eta}

\newcommand\lk{\varkappa}
\renewcommand\ll{\lambda}

\newcommand\lm{\mu}
\renewcommand{\ln}{\nu}

\newcommand\lf{\phi}

\newcommand\lF{\Phi}
\renewcommand\lq{\psi}
\newcommand\lQ{\Psi}
\newcommand\lp{\pi}

\newcommand\lS{\Sigma}

\newcommand\Lt{\tau}
\newcommand\lo{\omega}
\newcommand\lO{\Omega}
\newcommand\lx{\xi}

\newcommand\lz{\zeta}
\newcommand\AL{\mathcal A}

\newcommand\DL{\mathcal D} 
\newcommand\EL{\mathcal E}

\newcommand\KL{\mathcal K} 
 
\newcommand\ML{\mathcal M}

\newcommand\PL{\mathcal P}

\newcommand\VL{\mathcal V}

\newcommand\Cl{{\mathbf C}}

\newcommand\Nl{{\mathbf N}}

\newcommand\Tl{{\mathbf T}}

\newcommand\hL{\mathfrak h}
\newcommand\kL{\mathfrak k}

\newcommand\bb[4]{\bm{#1}&{#2}\\{#3}&{#4}\em}

\newcommand\q{\quad} 
\renewcommand\l{\ell}

\renewcommand\b{{\mathrel{\scalebox{0.8}{$\Box$}}}} 
\renewcommand\c[1]{{\check{#1}}}

\newcommand\f[2]{{\frac{#1}{#2}}}
 
\newcommand\h[1]{{\widehat{#1}}} 
\renewcommand\o[1]{{\overline{#1}}}

\newcommand\x[1]{{\mathrm{#1}}} 
\newcommand\y[1]{{\undertilde{#1}}}

\DeclareFontFamily{U}{mathb}{\hyphenchar\font45}
\DeclareFontShape{U}{mathb}{m}{n}{
 <5><6><7><8><9><10> gen * mathb
 <10.95> mathb10
}{}
\DeclareSymbolFont{mathb}{U}{mathb}{m}{n}
\DeclareMathSymbol{\VDash}3{mathb}{"28}

\begin{document}
\setcounter{section}{-1}

\title[Hilbert Modules on Bounded Symmetric Domains]{$K$-invariant Hilbert Modules and Singular Vector Bundles on Bounded Symmetric Domains}
\author{Harald Upmeier}
\medskip
\address{Fachbereich Mathematik, Universit\"at Marburg, D-35032 Marburg, Germany}
\email{upmeier{@}mathematik.uni-marburg.de}

\dedicatory{Dedicated to the Memory of Ottmar Loos}

\subjclass{Primary 32M15, 46E22; Secondary 14M12, 17C36, 47B35}

\keywords{bounded symmetric domain, Hilbert module, complex-analytic fibre space, Jordan triple}

\begin{abstract} We show that the "eigenbundle" (localization bundle) of certain Hilbert modules over bounded symmetric domains of rank $r$ is a "singular" vector bundle (linearly fibrered complex analytic space) which decomposes as a stratified sum of homogeneous vector bundles along a canonical stratification of length $r+1$. The fibres are realized in terms of representation theory on the normal space of the strata.
\end{abstract}

\maketitle

\section{Introduction}
Let $\PL_E\ap\Cl[z_1,\;,z_d]$ be the algebra of all polynomials on a vector space $E\ap\Cl^d.$ Let $\ML_\lz$ be the maximal ideal at $\lz\in E.$ For any ideal $I\ic\PL_E$ the quotient module
$$\y I_\lz:=I/\ML_\lz I$$
has finite dimension, and the disjoint union
$$\y I:=\U_{\lz\in E}\y I_\lz$$
has the structure of a "linearly fibrered complex analytic space" \cite{F}, also called a "singular vector bundle." We call 
$\y I$ the "localization bundle" of the ideal $I.$ For example, if $I$ is a prime ideal whose vanishing locus $X$ consists only of smooth points, then a result of Duan-Guo \cite{DG} states that the localization bundle has rank 1 on the regular set $E\sm X$ whereas on $X$ the bundle is isomorphic to the (dual) normal bundle to the submanifold $X.$ Thus we have a "stratification" of length 2. In this paper we consider ideals over bounded symmetric domains of arbitrary rank $r$ and obtain localization bundles which are stratified of length $r+1.$  

Besides their interest in complex and algebraic geometry, as the dual objects of coherent analytic module sheaves 
\cite{BMP,F}, these bundles play a fundamental role in multi-variable operator theory for commuting tuples $(T_1,\;,T_d)$ of non-selfadjoint operators acting on a Hilbert space $H$ of holomorphic functions on a bounded domain $D\ic\Cl^d.$ Here a central concept is the so-called "eigenbundle" 
$$\y H_\lz:=\{\lf\in H:\ T_j^*\lf=\o{\lz_j}\.\lf\ \forall\ 1\le j\le d\}$$
of $H$ viewed as a Hilbert module \cite{CD}. The main idea is that the differential-geometric properties (Chern connection, curvature, etc.) of the eigenbundle $\y H,$ viewed as a (singular) hermitian holomorphic vector bundle over $D,$ should characterize the underlying operator tuple up to unitary equivalence. In this sense the complex-analytic properties of $\y H$ are analogous to the spectral theorem in the self-adjoint case. If $H=\o I$ is the Hilbert closure of a polynomial ideal $I$ then there is a natural isomorphism $\y H\ap\y I|_D.$

In the original approach by Cowen-Douglas \cite{CD} this program was fully realized for a certain class of operator tuples where the eigenbundle is a genuine vector bundle without singularities. In general, for example in the situation of the Duan-Guo theorem, the eigenbundle is not a smooth vector bundle anymore and its fibre dimension varies over different strata within the domain $D.$ If $D=G/K$ is a bounded symmetric domain  of arbitrary rank $r$ there is a canonical stratification into "Kepler varieties" defined by a rank condition, and our main result characterizes the localization bundle $\y J^\ll$ for certain polynomial ideals $J^\ll$ determined by any partition $\ll$  of length $\le r.$ Only the "fundamental" partitions $\ll=(1,\;,1,0,\;,0)$ give rise to prime ideals. 

We show that the associated eigenbundle is stratified of length $r+1$ (including the open stratum) and the fibre at a point $\lz$ is described in terms of certain polynomials (instead of linear forms as for prime ideals and smooth points) on the normal space at $\lz.$ The realization is given by an explicit geometric construction, taking a projection of a polynomial to the normal space, and the main challenge is to identify the kernel of this projection map. The theory of Jordan algebras and Jordan triples \cite{C,FK2} is used to carry out the general discussion in a uniform way without using the classification of bounded symmetric domains.  

The results of this paper have numerous consequences, further developed in \cite{U3}. For example, for any Hilbert closure $H=\o J^\ll,$ with reproducing kernel $\KL(z,\lz),$ the localization bundle is explicitly identified with the subbundle 
$\y H\ic D\xx H,$ by taking certain "normal derivatives" of kernel functions $\KL^s$ constructed from $\KL.$ This is important for introducing the hermitian metric on the singular vector bundle. Moreover, in the spirit of the Borel-Weil-Bott theorem, the fibres $\y J_\lz^\ll$ can be described by holomorphic sections of line bundels over flag manifolds. It is also shown how to extend the analysis to arbitrary $K$-invariant ideals, in particular the (Jordan) determinantal ideals which are defined by vanishing conditions on the underlying stratification. 
   
\section{Hilbert modules and their eigenbundles}
Let $D$ be a bounded domain in a finite dimensional complex vector space $E\ap\Cl^d.$ Denote by $\PL_E\ap\Cl[z_1,\;,z_d]$ the algebra of all polynomials on $E.$ A Hilbert space $H$ of holomorphic functions $f$ on $D$ (supposed to be scalar-valued) is called a {\bf Hilbert module} if for any polynomial $p\in\PL_E$ the multiplication operator $T_pf:=pf$ leaves $H$ invariant and is bounded. Using the adjoint operators 
$T_p^*,$ the closed linear subspace
$$\y H_\lz:=\{f\in H:\ T_p^*f=\o{p(\lz)}f\ \forall\ p\in\PL_E\}$$
is called the {\bf joint eigenspace} at $\lz\in D.$ Since $T_pT_q=T_{pq}$ for polynomials $p,q$ it suffices to consider linear functionals or just the coordinate functions. The disjoint union
$$\y H=\U_{\lz\in D}\ \y H_\lz$$
becomes a subbundle of the trivial vector bundle $D\xx H,$ which is called the {\bf eigenbundle} of $H,$ although it is not locally trivial in general. One also requires that the fibres have finite (non-constant) dimension and their union is total in $H.$ For more details, cf. \cite{KM,BMP}.

The eigenbundle is closely related to the concept of {\bf localization}. For a given point $\lz\in D,$ the complex numbers become a $\PL_E$-module, denoted by $\Cl_\lz,$ under the action $p\.\lx:=p(\lz)\lx.$ Denote by
\be{3}\ML_\lz=\{f\in\PL_E:\ f(\lz)=0\}\ic\PL_E\ee 
the maximal ideal at $\lz\in E.$ Define the module tensor product 
\be{1}H\xt_{\PL_E}\Cl_\lz= H/\o{\ML_\lz H}\ee
where $\o{\ML_\lz H}$ is the closed submodule generated by $T_p\lq-p(\lz)\lq,$ for all $p\in\PL_E$ and $\lq\in H.$

\begin{lemma}\label{a} The map $\lf\qi[\lf]$ from $\y H_\lz$ to $H/\o{\ML_\lz H}$ is a Hilbert space isomorphism, with inverse given by
$$H/\o{\ML_\lz H}\to\y H_\lz,\q f+\o{\ML_\lz H}\qi\lp_\lz f,$$
where $\lp_\lz: H\to\y H_\lz$ is the orthogonal projection. Thus $\y H_\lz$ is the "quotient module" for the submodule 
$\o{\ML_\lz H}.$ 
\end{lemma}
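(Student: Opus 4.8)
The plan is to realize the eigenspace $\y H_\lz$ as the orthogonal complement of the closed submodule $N:=\o{\ML_\lz H}$ in $H$, and then to quote the standard fact that the Hilbert space quotient $H/N$ is canonically isometric to $N^\perp$ via the orthogonal projection.

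\emph{Step 1: the identity $\y H_\lz=N^\perp$.} By definition $N$ is the closure of the submodule generated by the vectors $T_p\lq-p(\lz)\lq=T_{p-p(\lz)}\lq$; since $T_r\bigl(T_{p-p(\lz)}\lq\bigr)=T_{p-p(\lz)}(T_r\lq)$, this set is already a linear subspace stable under the $\PL_E$-action, and as $p$ runs over $\PL_E$ the function $p-p(\lz)$ runs over the maximal ideal $\ML_\lz$, so this subspace is exactly $\ML_\lz H=\{\sum_i T_{q_i}h_i:q_i\in\ML_\lz,\ h_i\in H\}$. Consequently $f\perp N$ iff $\<f,\,(T_p-p(\lz))\lq\>=0$ for all $p\in\PL_E$ and $\lq\in H$, and by taking adjoints $\<f,(T_p-p(\lz))\lq\>=\<(T_p^*-\o{p(\lz)})f,\lq\>$; this vanishes for every $\lq$ precisely when $T_p^*f=\o{p(\lz)}f$. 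Since $T_pT_q=T_{pq}$ forces $T_p^*T_q^*=T_{pq}^*$, it suffices to impose the last equation for the coordinate functions; in any case $f\perp N\iff f\in\y H_\lz$. Hence $\y H_\lz=N^\perp$, so in particular it is closed and $H=N\op\y H_\lz$.

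\emph{Step 2: passing to the quotient.} Let $\lp\colon H\to H/N$ be the canonical map. Its kernel is $N$, so its restriction to $N^\perp=\y H_\lz$ is a linear bijection onto $H/N$, and it is isometric because for $\lf\in\y H_\lz$ one has $\|\lf+N\|_{H/N}=\inf_{n\in N}\|\lf-n\|=\|\lf\|$, the infimum being attained at $n=0$. Thus $\lf\qi[\lf]$ is a Hilbert space isomorphism $\y H_\lz\to H/N$. Its inverse sends a coset $f+N$ to its unique representative lying in $N^\perp$, namely the orthogonal projection $\lp_\lz f$: indeed $f-\lp_\lz f\in(N^\perp)^\perp=N$ gives $[\lp_\lz f]=[f]$, while $\lp_\lz$ restricts to the identity on $\y H_\lz$. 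This yields both displayed formulas and exhibits $\y H_\lz$ as the quotient module for the submodule $N$.

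The only point requiring genuine care is Step 1 — the translation of ``orthogonal to all generators of $\o{\ML_\lz H}$'' into the simultaneous eigenvalue equations $T_p^*f=\o{p(\lz)}f$, together with the reduction to the coordinate functions. Once the identification $\y H_\lz=(\o{\ML_\lz H})^\perp$ is established, what remains is the textbook description of a Hilbert space quotient by a closed subspace, and no further obstacle arises.
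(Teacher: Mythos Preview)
Your proof is correct and follows essentially the same line as the paper: identify $\y H_\lz$ with $(\ML_\lz H)^\perp$ via the adjoint equations (reducing to the coordinate functions $z_i-\lz_i$), and then invoke the standard isomorphism $N^\perp\cong H/\overline N$ for a closed subspace. The paper compresses this into a single chain of equalities, while you spell out the isometry and the inverse map explicitly, but there is no substantive difference.
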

\begin{proof} Let $z_i,\ 1\le i\le d$ denote the coordinate functions. By definition we have
$$\y H_\lz=\C_{i=1}^d \ker T_{z_i-\lz_i}^*=\C_{i=1}^d(T_{z_i-\lz_i}H)^\perp
=(\S_{i=1}^d T_{z_i-\lz_i}H)^\perp=(\ML_\lz H)^\perp\ap H/\o{\ML_\lz H}.$$
\end{proof}

Classical examples of Hilbert modules are the {\bf Bergman space} $H^2(D)$ of square-integrable holomorphic functions, whose reproducing kernel is called the Bergman kernel, and the {\bf Hardy space} $H^2(\dl D)$ if $D$ has a smooth boundary $\dl D.$ For general Hilbert modules $H$, a {\bf reproducing kernel function} is a sesqui-holomorphic function $\KL(z,\lz)$ on $D\xx D$ such that for each $\lz\in D$ the holomorphic function 
$$\KL_\lz(z):=\KL(z,\lz)$$ 
belongs to $H,$ and we have
$$\lq(z)=(\KL_z|\lq)_H$$ 
for all $\lq\in H$ and $z\in D.$ Here $(\lf|\lq)_H$ is the inner product on $H$ (anti-linear in the first variable). Thus 
$H$ is the closed linear span of the holomorphic functions $\KL_\lz,$ where 
$\lz\in D$ is arbitrary. In terms of an orthonormal basis $\lf_\la$ of $H,$ we have
$$\KL(z,\lz)=\S_\la\ \lf_\la(z)\o{\lf_\la(\lz)}.$$ 
The identity 
$$(T_p^*\KL_\lz|\lq)_H=(\KL_\lz|p\lq)_H=p(\lz)\lq(\lz)=p(\lz)\ (\KL_\lz|\lq)_H=(\o{p(\lz)}\ \KL_\lz|\lq)_H$$
for $p\in\PL_E$ and $\lq\in H$ shows that $T_p^*\KL_\lz=\o{p(\lz)}\ \KL_\lz,$ so that
$$\KL_\lz\in\y H_\lz$$ 
for each $\lz\in D.$ If $\KL$ has no zeros (e.g., the Bergman kernel of a strongly pseudo-convex domain) then the eigenbundle 
$\y H$ is spanned by the reproducing kernel functions $\KL_\lz$ and hence becomes a {\bf hermitian holomorphic line bundle}.

In general, Hilbert modules have reproducing kernel function which vanish along certain analytic subvarieties of $D.$ In this case the eigenbundle is not locally trivial, its fibre dimension can jump along the varieties and we obtain a {\bf singular vector bundle} on $D,$ also called a "linearly fibered complex analytic space". Such singular vector bundles are important in Several Complex Variables since, by \cite{F}, they are in duality with the category of {\bf coherent analytic module sheaves}, whereas (regular) vector bundles correspond to locally free sheaves.

An important class of Hilbert modules is given by the Hilbert closure $H=\o I$ of a polynomial ideal $I\ic\PL_E.$ In this case we define, analogous to \er{1}, the {\bf localization}
$$\y I_\lz:=I/\ML_\lz I$$ 
at $\lz\in E,$ and call the disjoint union 
$$\y I:=\U_{\lz\in E}\y I_\lz$$
the {\bf localization bundle} over $E.$ We first show that this has finite rank.

\begin{proposition} Let $p_1,\;,p_t$ be generators of $I.$ Then for any $\lz\in E$ the linear map
$$\Cl^t\to\y I_\lz,\ (a_1,\;,a_t)\qi\ML_\lz I+\S_{i=1}^t a_i\ p_i$$
is surjective, and hence $\dim I/\ML_\lz I\le t.$
\end{proposition}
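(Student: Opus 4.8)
The plan is to show surjectivity of the stated map directly from the definition of $\y I_\lz = I/\ML_\lz I$, using that $p_1,\dots,p_t$ generate $I$ as an ideal in $\PL_E$. First I would take an arbitrary element of $\y I_\lz$, that is, a coset $\ML_\lz I + f$ with $f\in I$. Since $f$ lies in the ideal generated by $p_1,\dots,p_t$, I can write $f = \sum_{i=1}^t q_i\, p_i$ for some polynomials $q_i\in\PL_E$. The key observation is then that each coefficient polynomial $q_i$ decomposes as $q_i = q_i(\lz) + (q_i - q_i(\lz))$, where the constant $q_i(\lz)\in\Cl$ and the polynomial $q_i - q_i(\lz)$ vanishes at $\lz$, hence lies in $\ML_\lz$ by \er{3}.

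Carrying this through, I would write
$$f = \S_{i=1}^t q_i(\lz)\, p_i + \S_{i=1}^t (q_i - q_i(\lz))\, p_i,$$
and observe that the second sum lies in $\ML_\lz I$ because each $(q_i - q_i(\lz))\in\ML_\lz$ and each $p_i\in I$, so each summand $(q_i - q_i(\lz))\, p_i\in\ML_\lz I$. Therefore $\ML_\lz I + f = \ML_\lz I + \S_{i=1}^t q_i(\lz)\, p_i$, which is exactly the image of $(q_1(\lz),\dots,q_t(\lz))\in\Cl^t$ under the given map. This proves surjectivity, and the bound $\dim I/\ML_\lz I \le t$ follows immediately since the domain $\Cl^t$ has dimension $t$.

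I do not expect any genuine obstacle here: the argument is the standard "reduction mod the maximal ideal" trick and the only point requiring a line of justification is that $(q_i-q_i(\lz))\,p_i\in\ML_\lz I$, which is immediate from $\ML_\lz I$ being by definition the set of sums $\sum g_j h_j$ with $g_j\in\ML_\lz$, $h_j\in I$. One minor remark worth including is that the map is well-defined and linear, which is clear since $(a_1,\dots,a_t)\mapsto \S_i a_i p_i$ is linear into $I$ and composing with the quotient projection $I\to I/\ML_\lz I$ preserves linearity. This finite-rank statement is what makes $\y I$ a candidate for a (singular) vector bundle, setting the stage for the finer stratification results later in the paper.
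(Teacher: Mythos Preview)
Your proof is correct and follows essentially the same approach as the paper's: write $f=\sum_i q_i p_i$, subtract off $\sum_i q_i(\lz)p_i$, and observe the difference lies in $\ML_\lz I$. The paper's argument is the same modulo notation (it uses $h_i$ in place of your $q_i$), just stated more tersely.
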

\begin{proof} Write $f\in I$ as $f=\S_{i=1}^t h_i\ p_i$ with $h_i\in\PL_E.$ Then
$$f-\S_i h_i(\lz)p_i=\S_i(h_i-h_i(\lz))p_i\in\ML_\lz I.$$ 
Putting $a_i=h_i(\lz),$ the assertion follows.
\end{proof}

For any ideal $I\ic\PL_E$ consider the vanishing locus
$$\VL^I=\{\lz\in E:\ p(\lz)=0\ \forall\ p\in I\}$$
and the "regular set"
$$\c E=E\sm\VL^I.$$

\begin{proposition}\label{b} For $\lz\in\c E$ there is an isomorphism
\be{2}\y I_\lz\to\Cl,\q f+\ML_\lz I\mapsto f(\lz).\ee
Thus the localization bundle has rank 1 on the regular set $E\sm\VL^I.$
\end{proposition}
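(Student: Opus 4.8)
The plan is to exhibit the map in \er{2} as well-defined and bijective by producing an explicit inverse. Since $\lz\in\c E$, there is some $q\in I$ with $q(\lz)\ne 0$; rescaling, we may assume $q(\lz)=1$. The evaluation map $f\mapsto f(\lz)$ is clearly linear on $I$, and it kills $\ML_\lz I$ because every element of $\ML_\lz I$ is a finite sum of terms $g\cdot p$ with $g\in\ML_\lz$, $p\in I$, and $(gp)(\lz)=g(\lz)p(\lz)=0$. Hence the map descends to a well-defined linear functional $\y I_\lz\to\Cl$. It is surjective since $q(\lz)=1$.

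For injectivity, the key observation is that for any $f\in I$ we have $f - f(\lz)\,q = (f - f(\lz)q)$, and this polynomial vanishes at $\lz$, i.e. it lies in $\ML_\lz$. Since both $f$ and $q$ lie in $I$, we get $f - f(\lz)q \in \ML_\lz\cap I$. The point is that $\ML_\lz \cap I \subseteq \ML_\lz I$: indeed, any $h\in \ML_\lz\cap I$ can be written $h = h\cdot q$ (using $q(\lz)=1$, so actually $h - hq = h(1-q)\in\ML_\lz I$ already gives $h \equiv hq \pmod{\ML_\lz I}$), wait — more directly, $h = hq + h(1-q)$, where $hq\in\ML_\lz I$ since $h\in\ML_\lz$ and $q\in I$, while $h(1-q)$: here $1-q\in\ML_\lz$ and $h\in I$, so $h(1-q)\in\ML_\lz I$ as well. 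Thus $h\in\ML_\lz I$. Applying this with $h = f - f(\lz)q$ shows $f\equiv f(\lz)q\pmod{\ML_\lz I}$, so if $f(\lz)=0$ then $f\in\ML_\lz I$, proving injectivity. Combined with Proposition~\ref{b}'s surjectivity, \er{2} is an isomorphism, and the final sentence about rank 1 is immediate.

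I expect the only subtle point to be the identity $\ML_\lz\cap I = \ML_\lz I$ on the regular set, which is precisely where the hypothesis $\lz\notin\VL^I$ enters: it fails in general (e.g. at a point of $\VL^I$ where $I$ is the maximal ideal, $\ML_\lz\cap I = \ML_\lz$ but $\ML_\lz I = \ML_\lz^2$). The trick of choosing $q\in I$ with $q(\lz)\ne 0$ and writing elements against the "partition of unity" relation $1 = q + (1-q)$ modulo $\ML_\lz$ handles this cleanly. Everything else is a routine check that evaluation is linear and annihilates $\ML_\lz I$; no deeper machinery (Nakayama, primary decomposition) is needed, though one could alternatively phrase the argument as: localizing at the prime $\ML_\lz$, the ideal $I$ becomes the unit ideal since $q$ becomes a unit, and then $\y I_\lz = I_{\ML_\lz}/\ML_\lz I_{\ML_\lz} = \PL_{E,\ML_\lz}/\ML_\lz\PL_{E,\ML_\lz}\cong\Cl$.
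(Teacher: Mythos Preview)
Your proof is correct and essentially identical to the paper's: both pick $q\in I$ with $q(\lz)=1$ and use the decomposition $h=hq+h(1-q)$ (the paper writes it as $f=(1-p)f+fp$) to show that any $f\in I$ with $f(\lz)=0$ lies in $\ML_\lz I$. The only cosmetic difference is that you route the argument through the auxiliary statement $\ML_\lz\cap I\subseteq\ML_\lz I$, while the paper applies the decomposition directly to $f$.
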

\begin{proof} If $f=gh,$ with $g\in\ML_\lz$ and $h\in I,$ then $g(\lz)=0$ and hence $f(\lz)=g(\lz)h(\lz)=0.$ Thus the map 
\er{2} is well-defined. Since $\lz\in\c E$ there exists $p\in I$ such that $p(\lz)\ne 0.$ Thus the map \er{2} is non-zero and hence surjective. To show injectivity, we may assume $p(\lz)=1.$ If $f\in I$ satisfies $f(\lz)=0,$ then
$$f=(1-p)f+fp\in\ML_\lz I$$
since both $1-p$ and $f$ belong to $\ML_\lz.$
\end{proof}

Passing to a Hilbert module completion $H=\o I$ it is shown in \cite{DG} that $\dim I/\ML_\lz I=\dim H/\o{\ML_\lz H}<+\oo$ and hence the map
$$\y I_\lz\to\y H_\lz,\q f+\ML_\lz I\qi\lp_\lz f$$
is an isomorphism. The difference between $\y I$ and $\y H$ is that $\y H$ carries an additional hermitian fibre metric, being embedded in $D\xx H.$ Also, $\y I$ is defined on all of $E,$ whereas $\y H$ is defined only on $D.$ In this sense we have 
\be{13}\y H=\y I|_D,\ee
but equipped with a holomorphic hermitian metric. As shown in \cite{BMP} the analytic module sheaf associated with $H$ is coherent, so that these bundles become "complex-analytic linear fibre spaces" in the sense of \cite{F}. We prefer the term 
{\bf singular vector bundle.} 

Let us recall the following \cite[Lemma 2.3]{BMP}:

\begin{lemma}\label{c} Let $p_1,\;,p_t$ be a finite set of generators of $I.$ Then $f\in\y H_\lz$ satisfies
$$\o{p_i(\lz)}f=(p_i|f)_H\ \KL_\lz$$
for all $i.$ 
\end{lemma}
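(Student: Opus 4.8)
The plan is to reduce the claimed identity to a comparison of inner products on a dense subspace. First I would record the reformulation of the eigenspace condition: $f\in\y H_\lz$ means $T_p^*f=\o{p(\lz)}f$ for every $p\in\PL_E$, and pairing this with an arbitrary $h\in H$ (using that $T_ph=ph$ again lies in $H$ because $H$ is a Hilbert module) turns it into the equivalent family of identities
$$(f\,|\,ph)_H=p(\lz)\,(f\,|\,h)_H\q(p\in\PL_E,\ h\in H).$$
Both $\o{p_i(\lz)}f$ and $(p_i\,|\,f)_H\,\KL_\lz$ lie in $H$ (the former since $f\in H$, the latter since $\KL_\lz\in H$), and $I$ is dense in $H$, so it is enough to show that these two vectors pair equally against every $h\in I$. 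By the reproducing property $h(\lz)=(\KL_\lz|h)_H$, the pairing of $(p_i\,|\,f)_H\,\KL_\lz$ against $h$ equals $(f\,|\,p_i)_H\,h(\lz)$, so the goal becomes the scalar identity $p_i(\lz)\,(f\,|\,h)_H=(f\,|\,p_i)_H\,h(\lz)$ for all $h\in I$.

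The crux is a symmetrization forced by commutativity of $\PL_E$. Specializing the displayed identity to $h=p_k\in I\ic H$ gives $(f\,|\,p_ip_k)_H=p_i(\lz)\,(f\,|\,p_k)_H$; exchanging $i$ and $k$ and using $p_ip_k=p_kp_i$ then yields
$$p_i(\lz)\,(f\,|\,p_k)_H=p_k(\lz)\,(f\,|\,p_i)_H\q\text{for all }i,k.$$
This is what couples the scalars $(f\,|\,p_k)_H$ belonging to different generators; the individual eigenvalue equations do not produce it, and this is the one step I expect to carry the real content.

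To finish, write a general $h\in I$ as $h=\S_{k=1}^t r_kp_k$ with $r_k\in\PL_E$. Replacing each $r_k$ by its value $r_k(\lz)$ changes $h$ only by $\S_k(r_k-r_k(\lz))p_k\in\ML_\lz I\ic\ML_\lz H$, to which $f$ is orthogonal since $\y H_\lz=(\ML_\lz H)^\perp$ by Lemma~\ref{a}; hence $(f\,|\,h)_H=\S_k r_k(\lz)\,(f\,|\,p_k)_H$. Multiplying by $p_i(\lz)$ and applying the symmetrization to each term gives
$$p_i(\lz)\,(f\,|\,h)_H=\S_k r_k(\lz)\,p_k(\lz)\,(f\,|\,p_i)_H=(f\,|\,p_i)_H\,h(\lz),$$
which is exactly the identity required; since $I$ is dense in $H$, we conclude $\o{p_i(\lz)}f=(p_i\,|\,f)_H\,\KL_\lz$.

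A less uniform alternative would be to split into cases using earlier results. For $\lz$ in the regular set $\c E$ the fibre $\y H_\lz$ is one-dimensional (Proposition~\ref{b} together with the dimension equality of \cite{DG}) and is spanned by $\KL_\lz\ne0$, so writing $f=c\,\KL_\lz$ the claim is immediate from $(\KL_\lz\,|\,p_i)_H=p_i(\lz)$; for $\lz\in\VL^I$ every element of an orthonormal basis of $H=\o I$ vanishes at $\lz$, so $\KL_\lz=0$, while $p_i(\lz)=0$ because $p_i\in I$, and both sides vanish. In either approach the only genuine subtlety is linking the different generators — via commutativity above, via the fibre-dimension count here.
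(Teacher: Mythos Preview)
Your main argument is essentially the paper's: derive the symmetry $p_i(\lz)(f|p_k)_H=p_k(\lz)(f|p_i)_H$ from commutativity of $\PL_E$, then test against a general element $\sum_k r_k p_k\in I$ and reduce each coefficient $r_k$ to its value $r_k(\lz)$ (the paper records this step as $(T_{r_k}^*f|p_k)_H=r_k(\lz)(f|p_k)_H$ rather than by citing Lemma~\ref{a}, but the content is identical). The case-splitting alternative you sketch at the end is also correct, though it leans on the Duan--Guo dimension equality and hence on more external input than the direct argument needs.
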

\begin{proof} For any $1\le i,j\le t$ we have
$$p_i(\lz)(f|p_j)_H=(\o{p_i(\lz)}f|p_j)_H=(T_{p_i}^*f|p_j)_H=(f|p_ip_j)_H=p_j(\lz)(f|p_i)_H.$$
Let $q_j\in\PL_E$ be arbitrary. Using the reproducing property it follows that
$$p_i(\lz)\(f|\S_j p_jq_j\)_H=p_i(\lz)\S_j(T_{q_j}^*f|p_j)_H=p_i(\lz)\S_jq_j(\lz)(f|p_j)_H$$
$$=(f|p_i)_H\S_jp_j(\lz)q_j(\lz)=(f|p_i)_H\S_j(\KL_\lz|p_jq_j)_H=\((p_i|f)_H\ \KL_\lz|\S_jp_jq_j\)_H.$$
Since the vector subspace $\{\S_j p_jq_j:\ q_j\in\PL_E\}$ is dense in $H=\o I,$ the assertion follows.
\end{proof}

Define the "regular set"
$$\c D:=D\sm\VL^I=\U_{j=1}^t\{\lz\in D:\ p_j(\lz)\ne 0\}$$
as a dense open subset of $D.$

\begin{corollary}\label{d} For $\lz\in\c D:=D\ui\c E=D\sm\VL^I$ we have
\be{4}\y H_\lz=\Cl\KL_\lz.\ee
Thus the eigenbundle $\y H$ restricted to the regular set is a holomorphic line bundle spanned by the reproducing kernel functions $\KL_\lz,\ \lz\in\c D.$ 
\end{corollary}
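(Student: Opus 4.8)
The plan is to deduce Corollary~\ref{d} directly from Lemma~\ref{c} together with the rank-one statement already proved in Proposition~\ref{b} and the isomorphism $\y I_\lz\ap\y H_\lz$ recalled from \cite{DG}. First I would fix $\lz\in\c D$, so that by definition there is an index $i$ with $p_i(\lz)\ne 0$; after rescaling the generator we may assume $p_i(\lz)=1$. Applying Lemma~\ref{c} to this $i$ gives, for every $f\in\y H_\lz$,
$$f=\o{p_i(\lz)}f=(p_i|f)_H\,\KL_\lz,$$
so that $f$ is a scalar multiple of $\KL_\lz$; hence $\y H_\lz\ie\Cl\KL_\lz$. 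Conversely we already know $\KL_\lz\in\y H_\lz$ from the reproducing-kernel computation in Section~1, so the reverse inclusion holds as well, giving \er{4}. The only point to address is that $\KL_\lz\ne 0$, equivalently that $\y H_\lz$ is genuinely one-dimensional and not zero: this follows from Proposition~\ref{b} (the localization $\y I_\lz$ has rank $1$ on $\c E$) via the isomorphism $\y I_\lz\ap\y H_\lz$ on $\c D=D\ui\c E$, so $\dim\y H_\lz=1$ and $\KL_\lz$ must be a nonzero spanning vector.

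Having established the fibrewise statement, the second assertion — that $\y H|_{\c D}$ is a holomorphic line bundle spanned by the $\KL_\lz$ — is essentially a matter of packaging. The map $\lz\qi\KL_\lz$ is antiholomorphic... more precisely $\lz\qi\KL_{\o\lz}$, or equivalently the section $z\qi\KL(z,\lz)$ depends holomorphically on $\lz$ because $\KL$ is sesqui-holomorphic; since this section is nowhere zero on $\c D$ it trivializes $\y H|_{\c D}$ locally, exhibiting it as a rank-one holomorphic subbundle of the trivial bundle $\c D\xx H$. This is the content of the last sentence and needs no further argument beyond invoking sesqui-holomorphy of the reproducing kernel and the nonvanishing just shown.

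I do not anticipate a serious obstacle here: the corollary is a short consequence of Lemma~\ref{c}. The one place that requires a little care is the nonvanishing of $\KL_\lz$ on $\c D$ — a priori Lemma~\ref{c} only shows $\y H_\lz\ie\Cl\KL_\lz$, which would be vacuous if $\KL_\lz=0$ — and this is exactly why one must quote the dimension count $\dim H/\o{\ML_\lz H}=\dim I/\ML_\lz I=1$ from Proposition~\ref{b} and \cite{DG} rather than trying to argue purely within the Hilbert-module framework. Once that is in place, everything else is immediate, and the statement \er{4} together with holomorphic dependence of $\KL_\lz$ on $\lz$ gives the line-bundle claim.
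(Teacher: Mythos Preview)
Your proposal is correct and follows essentially the same route as the paper: the paper's proof is the single sentence ``Lemma~\ref{c} implies \er{4} since $p_i(\lz)\ne 0$ for some $i$,'' and your argument unpacks exactly this, with the added (and welcome) care of justifying $\KL_\lz\ne0$ via Proposition~\ref{b} and the Duan--Guo isomorphism, a point the paper leaves implicit.
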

\begin{proof} Lemma \ref{c} implies \er{4} since $p_i(\lz)\ne 0$ for some $i.$
\end{proof}

The behavior of $\y H$ on the singular set $D\sm\c D$ is more complicated and has so far been studied mostly when the vanishing locus of the reproducing kernel is a smooth subvariety of $D,$ for example given as a complete intersection of a regular sequence of polynomials. The case where $I$ is a {\bf prime ideal} whose vanishing locus $X:=\VL^I$ consists of 
{\bf smooth points} has been studied by Duan-Guo \cite{DG}. They showed that for $\lz\in D\sm X$
$$\y H_\lz=\<\KL_\lz\>$$
is 1-dimensional, whereas $\y H|_X$ is isomorphic to the (dual) {\bf normal bundle} of the submanifold $X.$ Thus we have a 
{\bf stratification of length 2}. We consider a more general situation for bounded symmetric domains $D$ of arbitrary rank $r,$ where we have a stratification of length $r+1$. Here the relevant algebraic varieties are not smooth and the ideal $I$ is not prime in general. 

\section{Bounded symmetric domains and Jordan triples}
We use the Jordan theoretic description of bounded symmetric domains \cite{Ar,C,FK2,L,U1}. Every (hermitian) bounded symmetric domain can be realized as the unit ball, with respect to the so-called spectral norm, in a complex vector space $E$ endowed with a {\bf Jordan triple product}. This is a ternary operation
$$E\xx E\xx E\to E\quad (u,v,w)\mapsto\{uv^*w\}$$
which is symmetric bilinear in the outer variables $(u,w),$ conjugate-linear in the inner variable $v$ and satisfies the "Jordan triple identity"
\be{6}[u\b v^*,z\b w^*]=\{uv^*z\}\b w^*-z\b\{wu^*v\}^*\ee
for all $u,v,z,w\in E.$ Here 
\be{5}(u\b v^*)z:=\{uv^*z\}\ee 
denotes the "triple multiplication" operator. The "star" occuring here is a formal symbol. A complex vector space $E$ carrying such a structure will be called a {\bf hermitian Jordan triple} (or $J^*$-triple) if the sesqui-linear product
$$(u|v)_0:=\x{tr}\ u\b v^*$$
is hermitian and positive-definite. Geometrically, this inner product coincides with the Bergman metric at the origin 
$0\in D.$ We consider only finite dimensional Jordan triples $E,$ although the Jordan theoretic description of symmetric manifolds carries over to the case of Banach manifolds \cite{U0,C}. We also assume that $E$ is irreducible of rank $r.$

The primary example of a hermitian Jordan triple is the {\bf matrix space} $E=\Cl^{r\xx s},$ with $r\le s,$ endowed with the anti-commutator triple product
$$\{uv^*w\}=uv^*w+wv^*u.$$
The associated domain is the matrix unit ball for the operator norm. We will often illustrate the general theory with this example. If $r=s$ one can take $v=e$ (unit matrix) and obtains the classical anti-commutator
$$\{ue^*w\}=uw+wu$$
which is the prototype of the so-called Jordan algebras \cite{FK2}. For rank $r=1,$ we obtain the unit ball 
$D\ic E=\Cl^{1\xx d}$ with Jordan triple product $\{xy^*z\}=(x|y)z+(z|y)x.$ The unit disk $D\ic E=\Cl$ corresponds to the  Jordan triple product $\{xy^*z\}=2x\o y z.$

Irreducible hermitian Jordan triples of rank $r$ are classified by two {\bf characteristic multiplicities} $a$ and $b$ such that
$$\f dr=1+\f a2(r-1)+b.$$
If $b=0$ the domain $D$ is called of {\bf tube type}. In this case $E$ is a {\bf Jordan algebra}. The full classification is
\begin{itemize}
\i{}{\bf matrix triple} $E=\Cl^{r\xx s},\ \x{rank}=r\le s,\ a=2,\ b=s-r$ (complex case)
\i{}{\bf symmetric matrices} $a=1,\ b=0$ (real case)
\i{}{\bf anti-symmetric matrices} $a=4$ (quaternion case)
\i{}{\bf spin factor} $E=\Cl^d,\ r=2,\ a=d-2,\ b=0$
\i{}{\bf exceptional Jordan triples} of dimension 27 ($r=3,\ a=8,\ b=0$) and 16 ($r=2$) (octonion case)
\end{itemize}

By the classification the only Jordan triple of rank 1 is the row-space $E=\Cl^s=\Cl^{1\xx s}.$ Its unit ball 
$D\ic\Cl^s=\Cl^{1\xx s}$ is the only bounded symmetric domain which is strictly pseudo-convex or has a smooth boundary.

Let $G$ be the identity component of the biholomorphic automorphism group of $D.$ Then $D=G/K,$ where the stabilizer subgroup 
$K\ic G$ at the origin consists of linear Jordan triple automorpisms of $E.$ The "structure group" $\h K,$ a complexification of $K,$ is a complex Lie subgroup of $\x{GL}(E)$ endowed with an involution $h\qi h^*$ such that
$$h(u\b v^*)h^{-1}=(hu)\b(h^{-*}v)^*$$
for all $u,v\in E.$ Let $E\b E^*$ denote the vector space spanned by linear transformations $u\b v^*,$ defined in \er{5}, for $u,v\in E$. By the Jordan triple identity \er{6} this is a Lie algebra which coincides with the Lie algebra $\h\kL$ of $\h K.$ We have 
$$\x{tr}\ u\b v^*=\x{const}\ (u|v)$$ 
for the $K$-invariant inner product on $E$ and put
$$E\b_0 E^*:=\{A\in E\b E^*:\ \x{tr}\ A=0\}.$$
In the matrix case $E=\Cl^{r\xx s}$ $\h K$ consists of all transformations of the form
$$hz=azd$$
where $a\in\x{GL}_r(\Cl),\ d\in\x{GL}_s(\Cl).$ The adjoint is $h^*z=a^*zd^*.$ Also, $\h\kL=E\b E^*$ consists of all transformations
$$z\qi az+zd$$
where $a\in\Cl^{r\xx r}$ and $d\in\Cl^{s\xx s}.$ 

In a Jordan triple $E$ the central notion of tripotent generalizes the idempotents in an algebra. An element $c\in E$ is called a {\bf tripotent} (triple idempotent) if the Jordan triple product satisfies 
$$\{cc^*c\}=2c.$$ 
For $0\le j\le r$ the set $S_j$ of all tripotents of rank $j$ is a compact real-analytic manifold which is homogeneous under 
$K.$ We have $S_0=\{0\},$ the elements of $S_1$ are called {\bf minimal} tripotents, and the maximal tripotents $S:=S_r$ form the 
{\bf Shilov boundary} of $D$ \cite{L}.

For the matrix triple $\Cl^{r\xx s}$ the tripotents are the so-called partial isometries characterized by $cc^*c=c.$ The minimal tripotents are the rank 1 matrices $\lx\lh^*,$ where $\lx\in\Cl^r$ and $\lh\in\Cl^s$ are unit vectors. For $r=s,$ any self-adjoint projection ($c=c^*=c^2$) and any unitary matrix ($cc^*=c^*c=1$) is a tripotent. In the rank 1 case $E=\Cl^d$ $S=S_1$ consists of all unit vectors. 

A {\bf frame} of $E$ is a family $(e_1,\;,e_r)$ of minimal orthogonal tripotents. Every $\lz\in E$ has a {\bf spectral decomposition}
\be{7}\lz=\S_j\lz_j e_j\ee
where $e_j$ is a frame and $\lz_1\ge\lz_2\ge\;\ge\lz_r\ge 0$ are the {\bf singular values}. For matrices this is the classical singular value decomposition under $U(r)\xx U(s).$

Any tripotent $c$ induces a {\bf Peirce decomposition}
\be{13}E=E_c^2\op E_c^1\op E_c^0\ee
into eigenspaces $E_c^\la=\{z\in E:\ \{cc^*z\}=\la z\}$ for $\la=0,1,2.$ The Peirce spaces $E_c^\la$ are Jordan subtriples of 
$E.$ In the matrix case the tripotent
$$c=\bb{1_\l}000$$
of rank $\l$ induces the Peirce decomposition
$$\Cl^{r\xx s}=\bb{\Cl^{\l\xx\l}}{\Cl^{\l\xx(s-\l)}}{\Cl^{(r-\l)\xx\l}}{\Cl^{(r-\l)\xx(s-\l)}}=\bb{E^2}{E^1}{E^1}{E^0}.$$
For the spin factor $E=\Cl^{2+a}$ of rank 2 consider the minimal tripotent $c=(1/2,i/2,0,\;,0)$ and put
$\o c=(1/2,-i/2,0,\;,0).$ Then $e=c+\o c=(1,0,\;,0)$ is the unit element and we have
$$E_c^2=\Cl\.c,\ E_c^0=\Cl\.\o c=E_{\o c}^2,\ E_c^1=E_{\o c}^1=\{c,\o c\}^\perp\ap\Cl^a.$$
We often abbreviate $E_c:=E_c^2$ and $E^c:=E_c^0.$

The number of non-zero singular values $\lz_j$ in \er{7} is called the {\bf rank} of $\lz.$ For each $0\le\l\le r$ the {\bf Kepler manifold} \cite{EU}
$$\c E_\l=\{\lz\in E:\ \x{rank}(\lz)=\l\}$$
is a complex-analytic manifold which is a $\h K$-orbit containing the compact submanifold $S_\l$ of all tripotents of rank 
$\l.$ For maximal $\l=r$ the set
$$\c E:=\c E_r$$
is an open dense subset of $E.$ The closure
\be{8}\h E_\l:=\{\lz\in E:\ \x{rank}(\lz)\le\l\}=\U_{i=0}^\l\c E_i\ee
of $\c E_\l$ is called the {\bf Kepler variety}. It is irreducible and normal \cite{EU}. The smooth points of $\h E_\l$ coincide with $\c E_\l.$ For matrices $E=\Cl^{r\xx s}$ we obtain the classical "determinantal variety" of all matrices of rank $\le\l.$ The spin factor $E=\Cl^d$ yields the quadric $\h E_1=\{z\in E:\ z\.z=\S_i z_i^2=0\}.$ The identity \er{8} is typical of a {\bf stratification}
$$E=\U_{\l=0}^r\c E_\l,\q\mbox{(disjoint union)}$$
into $r+1$ complex analytic $\h K$-orbits $\c E_\l.$ The maximal stratum $\c E=\c E_r$ is open and dense, and the only closed stratum is the minimal one $\h E_0=\c E_0=\{0\}.$

\section{$K$-invariant ideals and Hilbert modules}
For a $J^*$-triple $E$ let $\PL_E$ denote the algebra of all (holomorphic) polynomials, endowed with the "Fischer-Fock" inner product $(\lf|\lq)$ (anti-linear in the first variable). The natural action
$$(k\.f)(z):=f(k^{-1}z)$$
of $K$ on functions $f$ on $E$ induces a multiplicity-free {\bf Peter-Weyl decomposition} \cite{S,FK1}
$$\PL_E=\S_{\ll\in\Nl_+^r}\PL_E^\ll.$$
Here $\Nl_+^r$ denotes the set of all {\bf partitions} 
$$\ll=(\ll_1,\ll_2,\;,\ll_r)$$ 
of integers $\ll_1\ge\ll_2\ge\;\ge\ll_r\ge0.$ A partition $\ll$ is often identified with its {\bf Young diagram}
\be{14}[\ll]:=\{(i,j):\ 1\le i\le r,\ 1\le j\le\ll_i\}\ee
viewed as a subset of $\Nl\xx\Nl.$ The polynomials in $\PL_E^\ll$ are homogeneous of degree 
$$|\ll|:=\ll_1+\;+\ll_r.$$
Hence for any $n\in\Nl$ the $n$-homogeneous polynomials $\PL_E^n$ decompose as
$$\PL_E^n=\S_{|\ll|=n}\PL_E^\ll.$$
As special cases $\PL_E^{0,\;,0}=\PL_E^0=\Cl$ (constant functions) and $\PL_E^{1,0,\;,0}=\PL_E^1=E^*$ (linear dual space).

Let $\lf_\la^\ll$ be an orthonormal basis of $\PL_E^\ll.$ The sesqui-polynomial 
\be{9}\EL^\ll(z,\lz)=\S_\la\lf_\la^\ll(z)\o{\lf_\la^\ll(\lz)}\ee 
is called the {\bf Fischer-Fock reproducing kernel} for $\ll.$ For example, $\ll=(1,0,\;,0)$ gives rise to the normalized 
$K$-invariant inner product
$$\EL^{1,0,\;,0}(z,\lz)=(z|\lz).$$

\begin{lemma}\label{e} Let
$$\lc_\ll(k):=\x{tr}_{\PL_E^\ll}(k\.)$$
denote the character of the representation $\PL_E^\ll$ and put
$$d_\ll:=\dim\PL_E^\ll.$$ 
Then the orthogonal projection 
$$\lp^\ll:\PL_E\to\PL_E^\ll,\ f\qi\lp^\ll f=:f^\ll,$$ 
is given by the "character integral formula"
\be{10}\lp^\ll f=d_\ll\I_K dk\ \lc_\ll(k)\ f\oc k\ee
for all $f\in\PL_E,$ 
\end{lemma}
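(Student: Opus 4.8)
The plan is to derive the character integral formula from the general orthogonality relations for the compact group $K$, combined with the fact that the Peter-Weyl decomposition of $\PL_E$ is multiplicity-free. First I would set up the orthogonal projection $P^\ll:\PL_E\to\PL_E^\ll$ abstractly from the decomposition $\PL_E=\S_{\ml\in\Nl_+^r}\PL_E^\ml$, and observe that it commutes with the $K$-action, i.e. $P^\ll(f\oc k)=(P^\ll f)\oc k$, since each summand $\PL_E^\ml$ is $K$-invariant and the $K$-action is by unitaries for the Fischer-Fock inner product. Then define the candidate operator $Q:=d_\ll\I_K dk\ \lc_\ll(k)\ f\oc k$ and show $Q=P^\ll$.

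The key computational step is to verify $Q$ on each irreducible constituent. For $f\in\PL_E^\ml$, the Schur orthogonality relations for matrix coefficients of the (unitary, finite-dimensional) representation $\PL_E^\ml$ give
\be{qproj}
d_\ll\I_K dk\ \o{\lc_\ll(k)}\ (\lf_\la^\ml\oc k)=\ld_{\ll\ml}\ \lf_\la^\ml,
\ee
where one uses $\o{\lc_\ll(k)}=\lc_\ll(k^{-1})$ and the standard integral $\I_K dk\ \o{(\lp^\ll(k))_{\la\lb}}\ (\lp^\ml(k))_{\la'\lb'}=\f{1}{d_\ll}\ld_{\ll\ml}\ld_{\la\la'}\ld_{\lb\lb'}$ in an orthonormal basis. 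Summing the contributions of a diagonal matrix coefficient $\lc_\ll(k)=\S_\lb(\lp^\ll(k))_{\lb\lb}$ against $f\oc k$ expanded in the basis $\lf_\la^\ml$ yields $Qf=f$ when $\ml=\ll$ and $Qf=0$ otherwise. A small point requiring care: the formula in the statement has $\lc_\ll(k)$, not $\o{\lc_\ll(k)}$, so I need to track whether $\PL_E^\ll$ or its contragredient appears — this is reconciled by the fact that $(k\.f)(z)=f(k^{-1}z)$, so the representation realized on $\PL_E^\ll$ by $f\qi f\oc k$ is the one whose character, evaluated at $k$, is $\lc_\ll(k^{-1})$; alternatively one rewrites the integral by $k\qi k^{-1}$ using invariance of Haar measure. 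I would state this reconciliation explicitly rather than sweep it under the rug.

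The main obstacle — really the only subtlety — is this bookkeeping of which representation and which variable the character is evaluated on, so that the integral formula comes out with $\lc_\ll(k)$ and with $f\oc k$ (composition on the right) exactly as written, consistent with the convention $(k\.f)(z)=f(k^{-1}z)$ fixed earlier. Once the conventions are pinned down, the argument is: (i) $Q$ maps $\PL_E$ into $\PL_E^\ll$ and kills every $\PL_E^\ml$ with $\ml\ne\ll$, by \er{qproj}; (ii) $Q$ is the identity on $\PL_E^\ll$, again by \er{qproj}; (iii) hence $Q$ agrees with $P^\ll$ on each summand of the Peter-Weyl decomposition, and since that decomposition is (algebraically) dense and both operators are continuous, $Q=P^\ll$ on all of $\PL_E$. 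Finally I would remark that multiplicity-freeness is what guarantees $P^\ll$ is canonically defined (independent of any choice of basis $\lf_\la^\ll$), which is implicitly used when we call it "the" orthogonal projection.
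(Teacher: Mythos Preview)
Your proposal is correct and follows essentially the same route as the paper: both reduce to Schur orthogonality on the multiplicity-free Peter--Weyl decomposition, testing the integral operator against basis vectors $\lf_\la^\lm$ of each $\PL_E^\lm$. The convention bookkeeping you flag as the only subtlety is handled in the paper by the single identity $\lc_\ll(k)=\S_\lg(\lf_\lg^\ll\oc k|\lf_\lg^\ll)$ (from unitarity of the $K$-action), which places the $\oc k$ on the side matching the integrand $f\oc k$.
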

\begin{proof} By definition,
$$\lc_\ll(k)=\S_\lg(\lf_\lg^\ll|\lf_\lg^\ll\oc k^{-1})=\S_\lg(\lf_\lg^\ll\oc k|\lf_\lg^\ll).$$
For any partition $\lm$ it follows from Schur orthogonality that
$$d_\ll\I_K dk\ \lc_\ll(k)\ (\lf_\la^\lm|\lf_\lb^\lm\oc k)
=d_\ll\I_K dk\ \S_\lg(\lf_\lg^\ll\oc k|\lf_\lg^\ll)\ (\lf_\la^\lm|\lf_\lb^\lm\oc k)$$
$$=\ld_\ll^\lm\S_\lg(\lf_\lg^\ll|\lf_\lb^\lm)(\lf_\la^\lm|\lf_\lg^\ll)
=\ld_\ll^\lm\ld_\la^\lb(\lf_\la^\ll|\lf_\lb^\lm)=(\lf_\la^\ll|\lp^\ll\lf_\lb^\lm).$$
since $\PL_E^\ll$ and $\PL_E^\lm$ are inequivalent $K$-modules if $\ll\ne\lm.$
\end{proof}

Any $K$-invariant Hilbert module $H$ of holomorphic functions on $D$ carries a $K$-invariant inner product $(\lf|\lq)_H$ which is uniquely determined by the condition
$$(p|q)=a_\ll\ (p|q)_H$$
for all $p,q\in\PL_E^\ll\ic H$ where $a_\ll>0$ are constants, and $\ll$ runs over all partitions such that 
$\PL_E^\ll\ic H.$ Taking only those partitions we thus have a Fourier type decomposition
$$H=\S_\ll\PL_E^\ll\q\mbox{(Hilbert sum)}$$
determined by the sequence $(a_\ll)$ of coefficients. Using \er{9} it follows that $H$ has the reproducing kernel
$$\KL(z,\lz)=\S_\ll a_\ll\ \EL^\ll(z,\lz),$$
summed over all partitions $\ll$ such that $\PL_E^\ll\ic H.$ 

\begin{lemma}\label{f} Let $H$ be a $K$-invariant Hilbert module. If $f\in\y H_0$ then $f^\lm\in\y H_0$ for all $\lm.$
\end{lemma}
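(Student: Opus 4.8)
The plan is to exploit the $K$-invariance of everything in sight. Recall from Lemma~\ref{a} that $\y H_0 = (\ML_0 H)^\perp$, and that $\ML_0 H$ is the closed submodule generated by the vectors $z_i \lq$ for $\lq\in H$ and $1\le i\le d$. Since $\y H_0$ is characterized by $T_p^* f = \o{p(0)}\, f = 0$ for every polynomial $p$ vanishing at the origin — equivalently $f \perp z_i \lq$ for all $i$ and all $\lq\in H$ — the subspace $\y H_0$ consists precisely of the vectors in $H$ orthogonal to the closed span of $\{T_p \lq : p(0)=0,\ \lq\in H\}$. Both this span and its orthogonal complement are $K$-invariant: indeed $k\cdot(p\lq) = (p\oc k^{-1})(k\cdot\lq)$, and $p\oc k^{-1}$ still vanishes at the origin because $k$ fixes $0$, so $\ML_0 H$ is $K$-stable and hence so is $\y H_0$.

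Now $\y H_0$, being a $K$-invariant closed subspace of $H = \S_\ll \PL_E^\ll$, decomposes as a Hilbert sum of some of the isotypic components $\PL_E^\ll$ (each appears with multiplicity one by the multiplicity-free Peter-Weyl decomposition, so a $K$-invariant subspace is exactly a sub-sum over a subset of partitions). Therefore, if $f\in\y H_0$, writing $f = \S_\lm f^\lm$ with $f^\lm = \lp^\lm f \in \PL_E^\lm$ its Peter-Weyl components, each $f^\lm$ again lies in $\y H_0$. Concretely one can produce $f^\lm$ from $f$ by the character integral formula \er{10}, $f^\lm = d_\lm \I_K dk\ \lc_\lm(k)\ f\oc k$, which displays $f^\lm$ as a limit of finite convex combinations of the vectors $k\cdot f = f \oc k^{-1}$; since each $k\cdot f \in \y H_0$ by $K$-invariance and $\y H_0$ is closed, the integral lies in $\y H_0$ as well.

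The only point requiring a little care is the interchange of the projection $\lp^\lm$ (or the integral over $K$) with membership in $\y H_0$: one must check that $\lp^\lm$, a priori defined on $\PL_E$, acts on all of $H$ compatibly, i.e. that $\lp^\lm$ restricted to $H$ is the orthogonal projection onto $\PL_E^\lm \subseteq H$. This follows because the Fischer-Fock inner product and the Hilbert module inner product $(\cdot|\cdot)_H$ differ only by the positive scalars $a_\ll$ on each component $\PL_E^\ll$, so the two inner products have the same orthogonal decomposition into isotypic pieces; hence the $K$-equivariant projection onto the $\lm$-isotypic part is the same map whether computed in $\PL_E$ or in $H$, and \er{10} applies verbatim. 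Given this, the argument is immediate and there is no real obstacle — the content is entirely the observation that $\y H_0$ is $K$-invariant.
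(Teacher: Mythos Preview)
Your proof is correct and follows essentially the same route as the paper, which simply notes that $\y H_0$ is a closed $K$-invariant subspace and invokes the character integration formula \er{10}; you have just filled in the details the paper leaves implicit (why $\y H_0$ is $K$-stable, and how the projection extends to $H$). One small phrasing slip: the integral in \er{10} is not a limit of \emph{convex} combinations since $\lc_\lm(k)$ is complex-valued, but it is a limit of finite linear combinations of vectors in the closed \emph{linear} subspace $\y H_0$, which is all you need.
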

\begin{proof} Using the character integration formula \er{10}, this follows from the fact that $\y H_0$ is a closed subspace of 
$H$ which is $K$-invariant.
\end{proof}

A $K$-invariant reproducing kernel function $\KL(z,\lz)=\KL_\lz(z)$ satisfies $\KL(kz,k\lz)=\KL(z,\lz)$ for all $k\in K.$ By analytic continuation this implies
$$\KL_\lz\oc h^*=\KL_{h\lz}$$
for all $\lz\in D$ and $h\in\h K$ such that $h\lz\in D.$

\begin{lemma}\label{g} Let $H$ be a $K$-invariant Hilbert module and $h\in\h K.$ Then
$$\y H_\lz\oc h^*=\y H_{h\lz}$$
\end{lemma}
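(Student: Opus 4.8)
The plan is to transport the eigenspace equation $T_p^*f=\o{p(\lz)}f$ along the action of $h$. The key observation is that for a $K$-invariant Hilbert module, the operator $T_p^*$ on $H$ is intertwined by composition with $h^*\in\h K$ in a way that reflects the triple product identity \er{6}; more precisely, if $\lf\in\y H_\lz$ then we want to show $\lf\oc h^*\in\y H_{h\lz}$, i.e. $T_q^*(\lf\oc h^*)=\o{q(h\lz)}\,(\lf\oc h^*)$ for all $q\in\PL_E$. Since it suffices to check this on linear functionals $q\in E^*$, I would first establish how the substitution operator $R_{h^*}:g\qi g\oc h^*$ conjugates the multiplication operators, namely
$$R_{h^*}^{-1}\,T_q\,R_{h^*}=T_{q\oc h^*}$$
as operators on $\PL_E$ (this is immediate since multiplication by $q\oc h^*$ followed by composition with $h^*$ equals composition with $h^*$ followed by multiplication by $q$). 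The subtlety is that $R_{h^*}$ need not be unitary on $H$, so passing to adjoints requires care.

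The clean way around the non-unitarity is to avoid adjoints on the "wrong" side. Concretely, I would use Lemma \ref{a}: $\y H_\lz\ap H/\o{\ML_\lz H}$, and instead of working with $T_p^*$ directly, work with the submodule $\o{\ML_\lz H}$. One checks that $R_{h^*}$ (which is bounded and invertible on $H$ because $h\in\h K$ acts by a bounded invertible operator, and $K$-invariance of the norm gives bounds on the $\h K$-action on each Peirce-graded piece — alternatively one invokes that $\h K$ acts on $H$ by bounded operators since $H=\S_\ll\PL_E^\ll$ with the $\h K$-action preserving each finite-dimensional summand) maps $\ML_{h\lz}$ onto $\ML_\lz$ under precomposition: $p\in\ML_{h\lz}$ iff $p(h\lz)=0$ iff $(p\oc h^*)$... wait, one must be careful with which side $h$ versus $h^*$ enters. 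The correct bookkeeping: $\KL_\lz\oc h^*=\KL_{h\lz}$ (stated just before the lemma), so $R_{h^*}\KL_\lz=\KL_{h\lz}$; dualizing, $R_{h^*}$ should send $\y H_\lz$ to $\y H_{h\lz}$, giving exactly the claimed $\y H_\lz\oc h^*=\y H_{h\lz}$. So I would verify: for $f\in\y H_\lz$ and $q\in E^*$,
$$(T_q^*(f\oc h^*)\,|\,g)_H=(f\oc h^*\,|\,qg)_H$$
and manipulate the right side using $K$-invariance of $(\cdot|\cdot)_H$ together with the relation between $h$ and $h^*$ to pull the composition across, reducing to $(T_{q\oc h}^*f\,|\,g\oc h)_H\cdot(\text{const})$ and then applying $f\in\y H_\lz$.

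The main obstacle, and where I would spend the most effort, is the issue that $R_{h^*}$ is not an isometry, so the naive identity $(R_{h^*}f\,|\,g)_H=(f\,|\,R_{h^*}^*g)_H$ with $R_{h^*}^*=R_{h^*}^{-1}$ fails. The resolution exploits that $\h K$ acts on $\PL_E$ preserving each $\PL_E^\ll$, and the $H$-inner product differs from the Fischer--Fock one only by the scalar $a_\ll$ on each block, so $R_{h^*}^*$ (adjoint in $H$) equals $R_{h^*}^*$ (adjoint in Fischer--Fock) block by block — and in Fischer--Fock the relevant adjoint relation is governed precisely by the involution $h\qi h^*$ of the structure group, via $(g\oc h^*\,|\,g')_0=(g\,|\,g'\oc h)_0$ up to the Jacobian-type normalization that is absorbed by $K$-invariance at the origin. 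Once that intertwining is in hand the proof is a two-line computation; so I would structure the writeup as (i) recall $R_{h^*}$ is bounded invertible on $H$ and block-diagonal with respect to the Peter--Weyl decomposition; (ii) identify its $H$-adjoint using $K$-invariance and the structure-group involution; (iii) compute $T_q^*$ on $f\oc h^*$ and read off the eigenvalue $\o{q(h\lz)}$. An alternative, perhaps cleaner, route sidesteps (ii) entirely: use Lemma \ref{a} to replace $\y H_\lz$ by $H/\o{\ML_\lz H}$, observe $R_{h^*}$ carries $\o{\ML_{h\lz}H}$ isomorphically onto $\o{\ML_\lz H}$ (purely algebraic once boundedness and invertibility of $R_{h^*}$ are known, since precomposition by $h^*$ takes generators $T_pg-p(\lz)g$ to generators of the other submodule after relabeling $p\mapsto p\oc h$ and using surjectivity of $h$), hence induces an isomorphism $H/\o{\ML_{h\lz}H}\ap H/\o{\ML_\lz H}$, which under the identification of Lemma \ref{a} is exactly the map $\lf\qi\lf\oc h^*$ on eigenspaces. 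I would present this second route as the main argument.
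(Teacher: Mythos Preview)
Your preferred second route has a genuine error. You claim that $R_{h^*}$ carries $\o{\ML_{h\lz}H}$ onto $\o{\ML_\lz H}$, but this fails for $h\notin K$. If $g=\lf\lq$ with $\lf\in\ML_{h\lz}$ (so $\lf(h\lz)=0$) and $\lq\in H$, then $g\oc h^*=(\lf\oc h^*)(\lq\oc h^*)$ and $(\lf\oc h^*)(\lz)=\lf(h^*\lz)$, which need not vanish since $h^*\lz\ne h\lz$ in general. Your parenthetical check with generators $T_pg-p(\lz)g$ has the same defect: $(T_pg-p(\lz)g)\oc h^*=((p\oc h^*)-p(\lz))(g\oc h^*)$ lies in $\ML_{h\lz}H$ only if $(p\oc h^*)(h\lz)=p(\lz)$, i.e.\ $p(h^*h\lz)=p(\lz)$ for all $p$, forcing $h^*h=\x{id}$. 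The submodule inclusion that \emph{does} hold is $(\ML_{h\lz}H)\oc h\ic\ML_\lz H$, via $R_h$ rather than $R_{h^*}$, since $(\lf\oc h)(\lz)=\lf(h\lz)=0$. But that alone gives a map of quotients in the wrong direction and does not identify it with $f\qi f\oc h^*$.

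The missing ingredient is precisely the adjoint identity you circle around in your first route, $(f\oc h^*|g)_H=(f|g\oc h)_H$ for all $h\in\h K$. The paper obtains it in one line by analytic continuation: both sides are holomorphic in $h$ (the left because $h\qi h^*$ is anti-holomorphic and the inner product is anti-linear in the first slot; the right because the inner product is linear in the second slot), and they agree on the real form $K$ where $h^*=h^{-1}$ and $K$-invariance gives $(f\oc k^{-1}|g)_H=(f|g\oc k)_H$. Your block-diagonal reasoning with ``Jacobian-type normalization'' is an unnecessarily opaque substitute for this. Once the identity is in hand, the paper combines it with $(\ML_{h\lz}H)\oc h\ic\ML_\lz H$ and the characterization $\y H_\lz=(\ML_\lz H)^\perp$ from Lemma~\ref{a}: for $f\in\y H_\lz$ and $g\in\ML_{h\lz}H$ one gets $(f\oc h^*|g)_H=(f|g\oc h)_H=0$, hence $f\oc h^*\in\y H_{h\lz}$; replacing $h$ by $h^{-1}$ gives the reverse inclusion. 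So the correct proof uses pieces of both your routes, but the adjoint identity is indispensable and cannot be bypassed by the purely algebraic quotient argument.
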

\begin{proof} For $f,g\in H$ we have $(f\oc k^{-1}|g)_H=(f|g\oc k)_H$ for all $k\in K$ since $H$ is $K$-invariant. It follows that
\be{11}(f\oc h^*|g)_H=(f|g\oc h)_H\ee
for all $h\in\h K,$ since both sides of \er{11} are holomorphic in $h$ and \er{11} holds for $h\in K$ where 
$h^*=h^{-1}.$ Now let $g=\lf\lq$ with $\lf\in\ML_{h\lz}$ and $\lq\in H.$ Then 
$$g\oc h=(\lf\oc h)(\lq\oc h)$$
with $\lq\oc h\in H$ and $\lf\oc h\in\PL_E$ satisfies $(\lf\oc h)(\lz)=\lf(h\lz)=0.$ Therefore $\lf\oc h\in\ML_\lz$ and
$g\in\ML_\lz H.$ If $f\in\y H_\lz$ then $(f\oc h^*|g)=(f|g\oc h)=0$ by Lemma \ref{a}. Hence \er{11} implies 
$f\oc h^*\in\y H_{h\lz}.$ Therefore $\y H_\lz\oc h\ic\y H_{h\lz}.$ Passing to $h^{-1}$ yields equality.
\end{proof}

The structure group $\h K$ acts transitively on each Kepler manifold $\c E_\l.$ It follows that
$$\c E_\l=\h K/\h K^c$$    
where $\h K^c=\{\lg\in\h K:\ \lg c=c\}$ for some (fixed) tripotent $c\in S_\l.$ If $\lg\in\h K^c$ then $\y H_c\oc\lg^*=\y H_c$ by Lemma \ref{g} and this defines an action of $\h K^c$ on $\y H_c.$ As usual define the {\bf homogeneous vector bundle}
$$\h K\xx_{\h K^c}\y H_c:=\{[h,\lf]=[h\lg,\lf\oc\lg^{-*}]:\ h\in\h K,\ \lg\in\h K^c,\ \lf\in\y H_c\},$$
endowed with the $\h K$-action $h\.[h',\lf]:=[hh',\lf].$

\begin{proposition}\label{h} For a $K$-invariant Hilbert module $H$ the restriction $\y H|_{\c E_\l}$ of the eigenbundle 
$\y H$ to each stratum $\c E_\l$ is $\h K$-isomorphic to the homogeneous vector bundle
$$\y H|_{\c E_\l}\ap \h K\xx_{\h K^c}\y H_c.$$
For the fibre at $\lz\in\c E_\l$ the isomorphism is given by $\Le_\lz f=[h,f\oc h^{-*}]$ for $f\in\y H_\lz,$ where $h\in\h K$ satisfies $hc=\lz.$
\end{proposition}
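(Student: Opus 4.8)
The plan is to exhibit the claimed map $\Le_\lz\colon\y H_\lz\to(\h K\xx_{\h K^c}\y H_c)_\lz$ explicitly, check it is well-defined independently of the choice of $h$ with $hc=\lz$, and then invert it; the bundle statement then follows by letting $\lz$ vary over $\c E_\l$ and checking holomorphy. First I would fix a tripotent $c\in S_\l$ and recall that $\h K$ acts transitively on $\c E_\l$ (stated in the excerpt), so for $\lz\in\c E_\l$ the set $\{h\in\h K:\ hc=\lz\}$ is a nonempty coset of $\h K^c$. For such an $h$, Lemma \ref{g} gives $\y H_c\oc h^*=\y H_{hc}=\y H_\lz$, equivalently $\y H_\lz\oc h^{-*}=\y H_c$; so for $f\in\y H_\lz$ the element $f\oc h^{-*}$ lies in $\y H_c$ and $[h,f\oc h^{-*}]$ is a well-defined point of the homogeneous bundle. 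I would then define $\Le_\lz f:=[h,f\oc h^{-*}]$.

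The first real check is independence of the choice of $h$. If $h'=h\lg$ with $\lg\in\h K^c$ also satisfies $h'c=\lz$, then $f\oc h'^{-*}=f\oc(h\lg)^{-*}=(f\oc h^{-*})\oc\lg^{-*}$, and since $\lg\in\h K^c$ preserves $\y H_c$ (this is exactly the $\h K^c$-action on $\y H_c$ recalled just before the proposition), the defining equivalence relation $[h\lg,\lf\oc\lg^{-*}]=[h,\lf]$ in $\h K\xx_{\h K^c}\y H_c$ gives $[h',f\oc h'^{-*}]=[h\lg,(f\oc h^{-*})\oc\lg^{-*}]=[h,f\oc h^{-*}]$. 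So $\Le_\lz$ is well-defined and visibly linear. The inverse is the natural one: a point of the fibre of the homogeneous bundle over $\lz$ is an equivalence class $[h,\lf]$ with $hc=\lz$ and $\lf\in\y H_c$; send it to $\lf\oc h^*\in\y H_{hc}=\y H_\lz$ (again Lemma \ref{g}). The same computation with the equivalence relation shows this is well-defined, and it is a two-sided inverse of $\Le_\lz$ since $(f\oc h^{-*})\oc h^*=f$. Hence each $\Le_\lz$ is a linear isomorphism, and $\h K$-equivariance is immediate from the definition of the $\h K$-action $h_0\.[h,\lf]=[h_0h,\lf]$: if $f\in\y H_\lz$ then $f\oc h_0^{-*}\in\y H_{h_0\lz}$ by Lemma \ref{g}, and $\Le_{h_0\lz}(f\oc h_0^{-*})=[h_0h,(f\oc h_0^{-*})\oc(h_0h)^{-*}]=[h_0h,f\oc h^{-*}]=h_0\.\Le_\lz f$, using $hc=\lz\Rightarrow (h_0h)c=h_0\lz$.

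Finally I would assemble these fibrewise isomorphisms into a bundle isomorphism over $\c E_\l$. Locally over $\c E_\l=\h K/\h K^c$ one chooses a holomorphic local section $\lz\mapsto h_\lz$ of $\h K\to\h K/\h K^c$; then $\Le_\lz$ is given in these local trivializations by $f\mapsto f\oc h_\lz^{-*}$, which depends holomorphically on $\lz$ because $h\mapsto h^{-*}$ is holomorphic on $\h K$ and composition of a fixed-degree polynomial with a linear map depends holomorphically on the map (the eigenspaces $\y H_\lz$ have locally constant dimension on the stratum, and the Hilbert-module structure makes the relevant maps continuous/holomorphic in $\lz$). Since changing the local section amounts to multiplying by an $\h K^c$-valued transition function, which is absorbed by the equivalence relation defining $\h K\xx_{\h K^c}\y H_c$, the local isomorphisms patch to a global $\h K$-equivariant holomorphic isomorphism $\y H|_{\c E_\l}\ap\h K\xx_{\h K^c}\y H_c$.

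I expect the only genuinely delicate point to be the holomorphic dependence on $\lz$, i.e.\ checking that $\y H|_{\c E_\l}$ is a holomorphic subbundle of $\c E_\l\xx H$ and that the trivializing maps $f\mapsto f\oc h_\lz^{-*}$ are holomorphic as maps of Hilbert-space-valued bundles; the purely algebraic well-definedness and the existence of the inverse are formal consequences of Lemma \ref{g} and the definition of the associated bundle, requiring no real work.
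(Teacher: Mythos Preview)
Your argument is essentially the paper's own proof: invoke Lemma \ref{g} to land $f\oc h^{-*}$ in $\y H_c$, check independence of $h$ via the equivalence relation $[h\lg,\lf\oc\lg^{-*}]=[h,\lf]$, and verify $\h K$-equivariance; your explicit inverse and the holomorphy discussion go a bit beyond what the paper writes down, but the core is identical.

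One slip to fix in the equivariance paragraph: from Lemma \ref{g} we have $\y H_\lz\oc h_0^*=\y H_{h_0\lz}$, so for $f\in\y H_\lz$ it is $f\oc h_0^*$ (not $f\oc h_0^{-*}$) that lies in $\y H_{h_0\lz}$. With that correction your computation becomes $(f\oc h_0^*)\oc(h_0h)^{-*}=f\oc\bigl(h_0^*\,h_0^{-*}h^{-*}\bigr)=f\oc h^{-*}$, exactly matching the paper's line $\Le_{h'\lz}(f\oc h'^*)=[h'h,f\oc h^{-*}]=h'\.\Le_\lz f$. As written, your version $(f\oc h_0^{-*})\oc(h_0h)^{-*}=f\oc h^{-*}$ is false in general.
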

\begin{proof} If $f\in\y H_\lz,$ with $\lz=hc,$ then $f\oc h^{-*}\in\y H_c$ by Lemma \ref{g} giving the equivalence class 
$\Le_\lz f=[h,f\oc h^{-*}].$ For any $\lg\in\h K^c$ the definition \er{12} yields 
$[h\lg,f\oc(h\lg)^{-*}]=[h\lg,f\oc h^{-*}\oc\lg^{-*}]=[h,f\oc h^{-*}],$ showing that $\Le_\lz$ is well-defined. To show equivariance let $h'\in\h K.$ Then $f\oc {h'}^*\in\y H_{h'\lz}$ by Lemma \ref{g}. Since $h'\lz=h'hc$ we have
$$\Le_{h'\lz}(f\oc{h'}^*)=[h'h,(f\oc{h'}^*)\oc(h'h)^{-*}]=[h'h,f\oc h^{-*}]=h'[h,f\oc h^{-*}]=h'\.\Le_\lz f.$$
\end{proof}

Thus for $K$-invariand Hilbert modules $H$ it suffices to determine the fibre $\y H_c$ for a fixed tripotent $c\in S_\l.$ The full eigenbundle will then automatically be a "stratified union" of homogeneous vector bundles along each stratum $\c E_\l.$ Note that these vector bundles are really anti-holomorphic in the fibre variable $\lz$, as is standard for Hilbert modules of holomorphic functions.

The basic {\bf examples} of Hilbert modules on a bounded symmetric domain are the so-called weighted Bergman spaces and Hardy type spaces on the Shilov boundary and the other boundary strata \cite{AU}. Using the spectral decomposition \er{7}, there exists a unique $K$-invariant sesqui-polynomial $\lD:E\xx E\to\Cl$ such that
$$\lD(\lz,\lz)=\S_{j=1}^r(1-\lz_j^2).$$
For $(r\xx s)$-matrices, we have
$$\lD(z,\lz)=\det(I_r-z\lz^*)=\det(I_{s}-\lz^*z).$$
There exists a scale of weighted Bergman spaces $H^2_s(D),$ for a scalar parameter $s>1+a(r-1)+b,$ with reproducing kernel
$$\KL_s(z,\lz)=\lD(z,\lz)^{-s}.$$
For $s=2+a(r-1)+b$ we obtain the standard Bergman space. The well-known {\bf Faraut-Kor\'anyi binomial formula} \cite{FK1}
$$\lD(z,\lz)^{-s}=\S_\ll(s)_\ll\ \EL^\ll(z,\lz)$$
expresses the kernel functions in terms of the Fischer-Fock reproducing kernels $\EL^\ll.$ Here $(s)_\ll$ denotes the 
{\bf multivariable Pochhammer symbol} (a quotient of Gindikin $\lG$-functions.) Using this formula, one can determine the 
{\bf analytic continuation} of the scale of weighted Bergman spaces as 
$$s>\f a2(r-1)\quad\mbox{(continuous Wallach set)}$$
together with $s=\l\f a2$ for $0\le\l\le r-1$ (discrete Wallach set). A deep result \cite{AZ} says that the corresponding Hilbert space $H_s$ is a Hilbert module if and only if $s$ belongs to the continuous Wallach set. For parameter
$$s=\f dr+(r-\l)\f a2$$
with $1\le\l\le r$ we obtain Hardy type spaces $H^2(\dl_\l D)$ which are supported on the boundary $G$-orbits $\dl_\l D.$ For 
$\l=r$ we obtain the {\bf Shilov boundary} $\dl_r D=S$ and the "standard" Hardy space $H^2(S).$ 

The Hilbert modules mentioned above contain $\PL_E$ as a dense subspace and their reproducing kernel function does not vanish. In this paper we study submodules where this is no longer the case. For any partition $\ll\in\Nl^r_+$ denote by $J^\ll\ic\PL_E$ the ideal generated by $\PL_E^\ll$ (or any linear basis). For example, $J^{0,\;,0}=\PL_E$ and $J^{1,0,\;,0}=\ML_0$ is the maximal ideal at $0\in E.$ These "partition" ideals $J^\ll$ are the main subject of the paper. Define the (partial) {\bf containment ordering} of partitions by 
$$\lm\ge\ll\q\mbox{iff}\q\lm_i\ge\ll_i\ \forall\ 1\le i\le r.$$
This is equivalent to inclusion $[\lm]\ci[\ll]$ of the respective Young diagrams \er{14}. Our first main result is 

\begin{theorem}\label{i} $J^\ll$ has the Peter-Weyl decomposition
$$J^\ll=\S_{\lm\ge\ll}^\op\PL_E^\lm.$$
\end{theorem}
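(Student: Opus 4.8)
The plan is to prove both inclusions of $J^\ll=\S_{\lm\ge\ll}^\op\PL_E^\lm$ by exploiting $K$-equivariance together with the multiplicity-freeness of the Peter-Weyl decomposition. First I would observe that $J^\ll$ is a $K$-invariant subspace of $\PL_E$: it is generated by $\PL_E^\ll$, which is $K$-invariant, and the $K$-action is by algebra automorphisms of $\PL_E$ (indeed of the Jordan triple), so $k\.(p\,q)=(k\.p)(k\.q)$ shows $k\.J^\ll\ic J^\ll$. Since the ambient decomposition $\PL_E=\S_\lm\PL_E^\lm$ is multiplicity-free, any $K$-invariant closed (here: graded) subspace is automatically a sub-sum of the $\PL_E^\lm$, i.e. $J^\ll=\S_{\lm\in A}^\op\PL_E^\lm$ for some index set $A$ of partitions; equivalently, $\PL_E^\lm\ie J^\ll$ or $\PL_E^\lm\ui J^\ll=0$ for each $\lm$, and this dichotomy can be detected by the character projection $\lp^\lm$ of Lemma \ref{e}. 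It remains to identify $A=\{\lm:\lm\ge\ll\}$.

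For the inclusion $\S_{\lm\ge\ll}^\op\PL_E^\lm\ie J^\ll$, the key input is the behavior of the Fischer-Fock kernels under the conical (diagonal) polynomials. Recall that for a maximal tripotent decomposition one has the conical functions $N_1,\;,N_r$ (principal Jordan minors), with $N_j$ spanning the $K$-highest weight line in $\PL_E^{1^j}$ where $1^j=(1,\;,1,0,\;,0)$. The classical fact (Faraut--Koranyi, Schmid) is that the lowest-degree component, in the Peter-Weyl grading, of $N^\ll:=N_1^{\ll_1-\ll_2}N_2^{\ll_2-\ll_3}\;N_r^{\ll_r}$ is the highest weight vector of $\PL_E^\ll$, and more generally that multiplication by $N_j$ maps the highest weight vector of $\PL_E^\lm$ into $\PL_E^{\lm+1^j}$ (up to lower terms). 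Since $J^\ll$ is the ideal generated by $\PL_E^\ll$ and is $K$-invariant, it contains the $K$-module generated by $N^\ll\cdot(\text{highest weight vector of }\PL_E^\ll)$-type products; iterating and using that $\PL_E$ is generated as an algebra by $E^*=\PL_E^{1}$, one shows that every $\PL_E^\lm$ with $\lm\ge\ll$ arises this way. Concretely: $\PL_E^\lm$ with $\lm\ge\ll$ is spanned by the $K$-orbit of its highest weight vector, which by the above equals (a nonzero multiple of the lowest term of) $N^{\lm-\ll}$ times the highest weight vector of $\PL_E^\ll$, hence lies in $J^\ll$ after projecting with $\lp^\lm$.

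For the reverse inclusion $J^\ll\ie\S_{\lm\ge\ll}^\op\PL_E^\lm$, it suffices — by the $K$-equivariant generation of $J^\ll$ and the fact that products of $K$-submodules decompose via Littlewood--Richardson-type rules — to check that for any partition $\ln$, the product $\PL_E^\ll\cdot\PL_E^\ln$ lies in $\S_{\lm\ge\ll}\PL_E^\lm$. Equivalently, only partitions $\lm\ge\ll$ occur in the decomposition of $\PL_E^\ll\xt\PL_E^\ln$ restricted to the realization inside $\PL_E$; this follows because the highest weight of any irreducible constituent of $\PL_E^\ll\cdot\PL_E^\ln$ is of the form (highest weight of $\PL_E^\ll$) $+$ (dominant weight), which in partition terms forces $\lm\ge\ll$. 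I would phrase this cleanly using the highest weight of $\PL_E^\ll$ being $\ll$ (in the standard parametrization) and the fact that all weights of $\PL_E^\ln$ are $\le\ln$ componentwise with nonnegative entries, so every weight of $\PL_E^\ll\cdot\PL_E^\ln$ has all partial sums $\ge$ those of $\ll$. Taking the ideal (i.e. closing under multiplication by all of $\PL_E$, hence by all $\PL_E^\ln$) preserves this, giving $J^\ll\ie\S_{\lm\ge\ll}^\op\PL_E^\lm$.

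The main obstacle is the first inclusion: one must show that \emph{every} $\lm\ge\ll$ actually occurs, i.e. that the relevant Littlewood--Richardson / Pieri coefficient is nonzero in this Jordan-triple setting, and that the conical polynomial product $N^{\lm-\ll}\cdot(\text{h.w. of }\PL_E^\ll)$ has a \emph{nonzero} component in $\PL_E^\lm$. This is exactly where the structure theory of $K$-types on bounded symmetric domains (the fact that the conical functions $N_j$ realize the "raising" operators on partitions, due to Faraut--Koranyi and Upmeier) is indispensable; without it the statement could conceivably fail for some sporadic $\lm$. I would isolate this as a lemma: multiplication $\PL_E^\ll\xt\PL_E^{1^j}\to\PL_E$ has nonzero component in $\PL_E^{\ll+1^j}$, and then bootstrap to arbitrary $\lm\ge\ll$ by writing $\lm-\ll$ as a sum of columns $1^j$ and inducting on $|\lm-\ll|$, at each step checking the component does not vanish — a step that reduces to evaluating at a generic element of the maximal Kepler stratum, where all the $N_j$ are simultaneously nonzero.
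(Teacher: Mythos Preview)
Your overall architecture is sound and close in spirit to the paper's, but the inductive step you propose for the inclusion $\S_{\lm\ge\ll}^\op\PL_E^\lm\ic J^\ll$ has a genuine gap. You want to reach an arbitrary $\lm\ge\ll$ from $\ll$ by successively adding \emph{columns} $1^j$ (i.e.\ multiplying by the minors $N_j$), but $\lm-\ll$ need not be a partition and hence cannot in general be written as a sum of columns. For instance, with $r=2$, $\ll=(3,1)$ and $\lm=(3,2)$ one has $\lm-\ll=(0,1)$, and no sequence of column additions starting at $\ll$ can reach $\lm$ (adding $1^1$ or $1^2$ already overshoots the first coordinate). Equivalently, $N^\lm/N^\ll$ involves negative exponents of some $N_j$ and is not a polynomial. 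So your product-of-highest-weight-vectors argument, while correct when $\lm-\ll$ happens to be a partition (then indeed $N^\ll\cdot N^{\lm-\ll}=N^\lm$ on the nose), does not cover all $\lm\ge\ll$.

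The paper fixes exactly this by adding \emph{single boxes} $\Le_i$ rather than columns: Lemma \ref{m} shows that any $\lm>\ll$ admits some $\ln\ge\ll$ with $\lm=\ln+\Le_j$, and Lemma \ref{l} (the crucial step) shows via the Pieri formula \er{15} for spherical polynomials that $\PL_E^{\lm}$ is spanned by the $\lm$-projections of $E^*\cdot\PL_E^\ln$. The nonvanishing you correctly flag as the main obstacle is handled there by checking that every Pieri coefficient $\prod_{j\ne i}(\lm_i'-\lm_j'+a/2)/(\lm_i'-\lm_j')$ is strictly positive. For the other inclusion, your weight argument (``all weights of $\PL_E^\ln$ have nonnegative entries'') is morally right but not justified for arbitrary Jordan triples; the paper instead invokes the Pieri-type rule \er{12}, $\l\cdot\PL_E^\lm\ic\S_i\PL_E^{\lm+\Le_i}$, which immediately gives that $\S_{\lm\ge\ll}\PL_E^\lm$ is an ideal. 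Replacing your column induction by a box induction and citing \er{12} in place of the weight argument would make your proof essentially coincide with the paper's.
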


For the matrix space $E=\Cl^{r\xx s}$ ($a=2$) this result is proved in 
\cite[Theorem 4.1]{CEP} using the theory of standard tableaux. Our proof, valid in the more general setting of $J^*$-triples, is based on harmonic analysis of spherical polynomials.

\begin{lemma}\label{j} $J:=\S_{\lm\ge\ll}^\op\PL_E^\lm$ is an ideal in $\PL_E.$
\end{lemma}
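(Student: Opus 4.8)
\textbf{Proof plan for Lemma \ref{j}.} The claim is that $J:=\sum_{\lm\ge\ll}^\op\PL_E^\lm$ is an ideal in $\PL_E.$ It is evidently a linear subspace, so I only need to show it is closed under multiplication by arbitrary polynomials, and since $\PL_E$ is generated as an algebra by the linear forms $E^*=\PL_E^{1,0,\;,0}$, it suffices to show that $\ell\cdot\PL_E^\lm\ic J$ for every linear form $\ell\in E^*$ and every partition $\lm\ge\ll$. The product $\ell\cdot\PL_E^\lm$ lies in the $K$-module $E^*\xt\PL_E^\lm$, which is spanned by the homogeneous polynomials of degree $|\lm|+1$ obtained from this product; decomposing into Peter-Weyl components, we have $\ell\cdot\PL_E^\lm\ic\sum_{\lm'}\PL_E^{\lm'}$ where $\lm'$ runs over the partitions of $|\lm|+1$ occurring in the $K$-decomposition of $E^*\xt\PL_E^\lm$.

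The key representation-theoretic input is the \emph{Pieri rule} for the spaces $\PL_E^\lm$: the tensor product $E^*\xt\PL_E^\lm$ decomposes multiplicity-freely as $\bigoplus_{\lm'}\PL_E^{\lm'}$ where $\lm'$ ranges over the partitions obtained from $\lm$ by adding a single box, i.e. $\lm'=\lm+e_i$ for some $i$ with $\lm'$ still a partition (this is the Faraut--Kor\'anyi / Schmid branching rule in the Jordan-theoretic setting, valid for any $J^*$-triple). Granting this, every such $\lm'$ satisfies $\lm'_j\ge\lm_j\ge\ll_j$ for all $j$, hence $\lm'\ge\lm\ge\ll$, so $\PL_E^{\lm'}\ic J$. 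Therefore $\ell\cdot\PL_E^\lm\ic J$, and by induction on degree $p\cdot J\ic J$ for all $p\in\PL_E$.

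First I would record the trivial observation that $J$ is a graded linear subspace and reduce multiplicative closure to multiplication by linear forms, using that $\PL_E$ is generated by $E^*$. Next I would invoke the Pieri/branching rule for the Peter-Weyl components to identify the partitions appearing in $E^*\xt\PL_E^\lm$ as exactly the one-box extensions of $\lm$. Finally I would observe that one-box extension preserves the containment ordering $\ge\ll$, closing the argument. The main obstacle is purely a matter of citing the correct form of the branching rule: one needs that no partition $\lm'\not\ge\lm$ (for instance one obtained by also removing a box elsewhere) can occur in the product $E^*\xt\PL_E^\lm$. This follows because $\ell\cdot p$ for $\ell\in E^*$, $p\in\PL_E^\lm$ is a polynomial of degree exactly $|\lm|+1$, so only partitions of $|\lm|+1$ arise, and among those the branching rule for the first fundamental representation $E^*$ singles out precisely the $\lm+e_i$; alternatively, and most elementarily, one can argue directly: if $\lm'$ occurs in $E^*\xt\PL_E^\lm$ then $\PL_E^\lm$ occurs in $E\xt\PL_E^{\lm'}$ by duality, and $\PL_E^{\lm'}\ic J^{\lm'}$ is an ideal containing $\PL_E^\lm$ forces (via Theorem \ref{i}, whose proof does not logically depend on this lemma, or via a direct degree and dominance count) $\lm'\ge\lm$. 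Either route makes the verification routine once the branching rule is in hand.
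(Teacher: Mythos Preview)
Your main argument is correct and is exactly the paper's proof: reduce to multiplication by linear forms and invoke the Pieri-type inclusion $\ell\cdot\PL_E^\lm\ic\sum_i\PL_E^{\lm+\Le_i}$ (the paper cites this as \cite[Corollary~2.10]{U2}), then note that $\lm+\Le_i\ge\lm\ge\ll$.

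One correction to your aside: the ``alternative'' route via Theorem~\ref{i} is circular. In the paper's logical structure Lemma~\ref{j} is used to establish the inclusion $J^\ll\ic J$, which is one half of Theorem~\ref{i}; so you cannot invoke Theorem~\ref{i} here. Stick with the direct citation of the branching rule.
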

\begin{proof} By \cite[Corollary 2.10]{U2} we have for any $\l\in E^*$ and $\lm\in\Nl_+^r$ 
\be{12}\l\.\PL_E^\lm\ic\S_{i=1}^r\PL_E^{\lm+\Le_i}\ee
where the sum is over all $i$ such that $\lm+\Le_i$ is a partition. Since $\lm\ge\ll$ implies $\lm+\Le_i\ge\ll$ it follows that 
$J$ is invariant under multiplication by linear forms and is therefore an ideal in $\PL_E$.
\end{proof}

It follows that $J^\ll\ic J.$ The converse inclusion requires more effort. 

\begin{lemma}\label{k} Let $\ll,\lm,\ln$ be partitions and suppose that 
$$\lS:=\{(pq)^\ll:\ p\in\PL_E^\lm,\ q\in\PL_E^\ln\}$$ 
contains a non-zero vector. Then $\PL_E^\ll$ is spanned by $\lS.$
\end{lemma}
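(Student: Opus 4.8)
The statement to prove is Lemma~\ref{k}: if $\lS:=\{(pq)^\ll:\ p\in\PL_E^\lm,\ q\in\PL_E^\ln\}$ contains a nonzero vector, then $\lS$ spans $\PL_E^\ll$. The key structural observation is that $\lS$ is not merely a subset but a $K$-invariant subspace (after taking linear span). Indeed, for $k\in K$ the action $f\mapsto f\oc k^{-1}$ commutes with the projection $\lp^\ll$ (since $\lp^\ll$ is given by the character integral formula \er{10}, which is built from the $K$-action), and it sends the product $pq$ to $(p\oc k^{-1})(q\oc k^{-1})$ with $p\oc k^{-1}\in\PL_E^\lm$, $q\oc k^{-1}\in\PL_E^\ln$. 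Hence $k\.(pq)^\ll=((p\oc k^{-1})(q\oc k^{-1}))^\ll\in\lS$. So the linear span $\langle\lS\rangle$ is a $K$-invariant subspace of $\PL_E^\ll$.

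The second and decisive ingredient is the multiplicity-freeness of the Peter-Weyl decomposition $\PL_E=\S_\ll\PL_E^\ll$: each $\PL_E^\ll$ is an \emph{irreducible} $K$-module. Therefore any nonzero $K$-invariant subspace of $\PL_E^\ll$ must be all of $\PL_E^\ll$. Combining the two points: $\langle\lS\rangle$ is $K$-invariant and, by hypothesis, nonzero, so $\langle\lS\rangle=\PL_E^\ll$, which is exactly the assertion.

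First I would record that $\lp^\ll(f\oc k)=(\lp^\ll f)\oc k$ for $k\in K$; this is immediate from \er{10} together with invariance of Haar measure and the fact that $\lc_\ll$ is a class function (or, more elementarily, from the fact that $\lp^\ll$ is the orthogonal projection onto a $K$-submodule of the unitary $K$-representation $\PL_E$ equipped with the Fischer-Fock inner product). Next I would verify the closure property of $\lS$ under $K$ as above, conclude $\langle\lS\rangle$ is a $K$-submodule, and then invoke irreducibility of $\PL_E^\ll$ to finish.

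The only point requiring the slightest care is the reduction from "$\lS$ contains a nonzero vector" to "$\langle\lS\rangle\neq 0$", which is trivial, and making sure the $K$-action is by $f\mapsto f\oc k^{-1}$ consistently (so that $(pq)\oc k^{-1}=(p\oc k^{-1})(q\oc k^{-1})$, multiplication of functions being pointwise and hence $K$-equivariant). I do not anticipate a genuine obstacle here: the lemma is a soft consequence of multiplicity-freeness, and no combinatorics of partitions or Jordan-theoretic input is needed at this step — that machinery enters only when one uses this lemma to prove Theorem~\ref{i}.
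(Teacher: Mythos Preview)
Your proof is correct and follows essentially the same route as the paper: show that $\lS$ is $K$-invariant (because $\lp^\ll$ commutes with the $K$-action and multiplication of functions is equivariant), hence $\langle\lS\rangle$ is a nonzero $K$-submodule of the irreducible module $\PL_E^\ll$, and conclude $\langle\lS\rangle=\PL_E^\ll$. The paper's argument is identical, only more tersely written.
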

\begin{proof} Since the $\ll$-projection satisfies
$$(pq)^\ll\oc k=((pq)\oc k)^\ll=((p\oc k)(q\oc k))^\ll$$
for all $k\in K,$ the set $\lS$ is $K$-invariant. Hence its linear span $\<\lS\>$ is a $K$-invariant subspace of 
$\PL_E^\ll$ which by assumption is non-zero. Irreducibility implies $\PL_E^\ll=\<\lS\>.$
\end{proof}

The crucial technical result is the following:
 
\begin{lemma}\label{l} Suppose that $\lm$ and $\lm+\Le_i$ are partitions. Then $\PL_E^{\lm+\Le_i}$ is spanned by terms 
$(\l q)^{\lm+\Le_i},$ where $\l\in E^*$ and $q\in\PL_E^\lm.$
\end{lemma}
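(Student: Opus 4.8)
The plan is to show that the span of the terms $(\l q)^{\lm+\Le_i}$, with $\l\in E^*$ and $q\in\PL_E^\lm$, is a non-zero $K$-invariant subspace of $\PL_E^{\lm+\Le_i}$; irreducibility of $\PL_E^{\lm+\Le_i}$ as a $K$-module then forces it to be all of $\PL_E^{\lm+\Le_i}$. This is exactly the mechanism of Lemma \ref{k}, applied with $\ln=(1,0,\dots,0)$ so that $\PL_E^\ln=E^*$, and the only thing that genuinely needs to be proved is that the set $\lS=\{(\l q)^{\lm+\Le_i}:\ \l\in E^*,\ q\in\PL_E^\lm\}$ contains a non-zero vector. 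Equivalently, invoking \er{12}, we must rule out the possibility that $E^*\cdot\PL_E^\lm$ lands entirely in $\sum_{j\ne i}\PL_E^{\lm+\Le_j}$, i.e.\ that the component of the product in the $(\lm+\Le_i)$-isotypic piece always vanishes.

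First I would reduce to a statement about the Fischer--Fock inner product. Since the Peter--Weyl decomposition is orthogonal and multiplicity-free, $(\l q)^{\lm+\Le_i}\ne 0$ for some $\l,q$ precisely when there exists a nonzero $p\in\PL_E^{\lm+\Le_i}$ with $(\l q\,|\,p)\ne 0$ for some $\l\in E^*$, $q\in\PL_E^\lm$. By the standard adjunction for the Fischer--Fock product, multiplication by $\l\in E^*$ has as its adjoint the directional derivative $\dl_{\o\l}$ (the constant-coefficient differential operator associated to $\l$ via the inner product $(\cdot|\cdot)_0$). So the condition becomes: there exists $p\in\PL_E^{\lm+\Le_i}$, nonzero, and $\l\in E^*$ such that $\dl_{\o\l}p$ has a nonzero $\PL_E^\lm$-component, i.e.\ $(\dl_{\o\l}p)^\lm\ne 0$. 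Suppose for contradiction that $(\dl_{\o\l}p)^\lm=0$ for every $p\in\PL_E^{\lm+\Le_i}$ and every $\l\in E^*$. The map $p\mapsto((\dl_{\o\l}p)^\lm)_{\l}$ is $K$-equivariant (derivatives in the direction of the $K$-module $E^*$, followed by the $K$-equivariant projection $\lp^\lm$), so its vanishing is a $K$-invariant statement; it would say that $\PL_E^{\lm+\Le_i}$ is annihilated, after one derivative, by the projection onto $\PL_E^\lm$.

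The decisive input is the \textbf{lowering/raising behaviour of the conical (highest-weight) polynomials} under the triple product, which I would take from the Faraut--Kor\'anyi / Upmeier calculus already cited for \er{12} in \cite{U2}. Concretely: let $\lD_\lm$ denote the conical polynomial of type $\lm$ (a product of powers of the principal minors $\lD_1,\dots,\lD_r$ of the Jordan triple), which spans the highest-weight line of $\PL_E^\lm$; similarly $\lD_{\lm+\Le_i}$. One knows an explicit Pieri-type formula expressing a suitable first-order derivative of $\lD_{\lm+\Le_i}$ as a nonzero multiple of $\lD_\lm$ plus lower terms in the other $\PL_E^{\lm+\Le_j}$ — this is the content of the "Pieri rule" for these polynomials (e.g.\ the fact that $\lD_i$ divides $\lD_{\lm+\Le_i}$ with quotient $\lD_\lm$-type data, or the differential recursion for the $\lD_\nu$). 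Taking $\l$ to be the linear form pairing nontrivially with the relevant minimal tripotent $e_i$, the derivative $\dl_{\o\l}\lD_{\lm+\Le_i}$ has a nonzero $\PL_E^\lm$-component. That produces the required nonzero element of $\lS$, contradicting the assumption, and the lemma follows by Lemma \ref{k}.

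\textbf{Main obstacle.} The routine parts — the equivariance bookkeeping, the adjunction between multiplication by $\l$ and differentiation, and the appeal to Lemma \ref{k} — are straightforward. The real work is pinning down the Pieri-type formula in the Jordan-triple generality: one must show that the $(\lm+\Le_i)$-component of $\l\cdot\lD_\lm$ (equivalently, that a first derivative of $\lD_{\lm+\Le_i}$ hits $\lD_\lm$) is genuinely nonzero for \emph{each} admissible $i$, not merely for $i=1$. This is exactly where the combinatorics of partitions interacts with the Jordan structure, and where one expects to use the explicit form of the conical polynomials as monomials in the principal minors $\lD_1,\dots,\lD_r$ together with the derivation property of the triple product; the non-vanishing of the relevant structure constant is the crux. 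I expect the author handles this by an explicit computation with $\lD_i$ and the Peirce decomposition at the frame $e_1,\dots,e_r$, reducing to the rank-one identity $\dl_{\o\l}\lD = \mathrm{const}\cdot\l$ restricted to the appropriate Peirce block.
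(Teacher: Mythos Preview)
Your strategy is exactly the paper's: reduce via Lemma~\ref{k} to exhibiting a single nonzero projection $(\l q)^{\lm+\Le_i}$, and recognize that the crux is a Pieri-type non-vanishing. You have correctly isolated the only substantive issue.

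Where the paper differs from your sketch is in how it discharges that obstacle. Rather than differentiating the conical (highest-weight) polynomial and looking for a $\PL_E^\lm$-component, the paper stays on the multiplication side and uses the \emph{spherical} polynomial $\lF^\lm$ (first in the tube-type case) together with the explicit Pieri formula
$$(z|e)\,\lF^\lm(z)=\sum_i \lF^{\lm+\Le_i}(z)\prod_{j\ne i}\frac{\lm_i'-\lm_j'+\tfrac a2}{\lm_i'-\lm_j'},\qquad \lm_i':=\lm_i-\tfrac a2(i-1),$$
quoted from \cite{K,AZ,St}. The entire argument then becomes a sign check: splitting into $i<j$ and $j<i$, and using that $\lm+\Le_i$ is itself a partition, one verifies that every factor $(\lm_i'-\lm_j'+\tfrac a2)/(\lm_i'-\lm_j')$ is strictly positive, so the coefficient of $\lF^{\lm+\Le_i}$ never vanishes and $((z|e)\lF^\lm)^{\lm+\Le_i}\ne 0$. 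For the non-tube case the paper replaces $\lF^\lm$ by the Fischer-Fock kernel $\EL^\lm(\cdot,e)$ at a maximal tripotent $e$, whose restriction to $E_e$ is a strictly positive multiple of $\lF^\lm$, so the same positivity carries over.

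Your differentiation-adjoint reformulation is valid but does not save work: the dual Pieri coefficients are the same up to positive normalization, so one is led back to the identical sign analysis. The practical advantage of spherical polynomials over conical ones here is precisely that the Pieri coefficients are available in closed form in the cited literature; for the conical highest-weight vectors the analogous identity is less standard and would in effect be derived from the spherical formula anyway. So your ``Main obstacle'' paragraph is accurate, and its resolution is the explicit positivity computation above rather than any structural or equivariance trick.
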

\begin{proof} By Lemma \ref{k} it suffices to show that some term $(\l q)^{\lm+\Le_i}$ is non-zero. Suppose first that 
$E$ is of tube type, with unit element  $e.$ Denote by $\lF^\lm\in\PL_E^\lm$ the {\bf spherical polynomial} \cite{FK2}. The so-called Pieri formula \cite{K,AZ,St} is
\be{15}(z|e)\lF^\lm(z)=\S_{i}\lF^{\lm+\Le_i}(z)\P_{j\ne i}\f{\lm_i'-\lm_j'+\f a2}{\lm_i'-\lm_j'}\ee
where we define 
$$\lm_i':=\lm_i-\f a2(i-1)$$
and the sum is over all $1\le i\le r$ such that $\lm+\Le_i$ is a partition. We claim that the coefficient 
$(\lm_i'-\lm_j'+\f a2)/(\lm_i'-\lm_j')$ in \er{15} is always $>0.$ If $i<j$ then $\lm_i\ge \lm_j$ and
$$\lm_i'-\lm_j'=\lm_i-\lm_j+\f a2(j-i)\ge\f a2(j-i)\ge\f a2.$$
Hence $\lm_i'-\lm_j'+\f a2\ge a.$ If $j<i$ then $\lm_j\ge\lm_i$ and
$$\lm_j'-\lm_i'=\lm_j-\lm_i+\f a2(i-j)\ge\f a2.$$
Moreover, $\lm_j'-\lm_i'-\f a2=\lm_j-\lm_i+\f a2(i-j-1).$ Since both summands are non-negative, we have $\lm_j'-\lm_i'-\f a2=0$ only if $\lm_j=\lm_i$ and $i-1=j.$ Thus $\lm_{i-1}=\lm_j=\lm_i$ and $\lm+\Le_i$ cannot be a partition. This proves the claim. It follows that $((z|e)\lF^\lm)^{\lm+\Le_i}\ne 0$ whenever $\lm+\Le_i$ is also a partition. 

In the general case, choose a maximal tripotent $e\in E.$ The Fischer-Fock kernel $\EL_e^\lm(z)=\EL^\lm(z,e)$ has a restriction to $E_e$ which is proportional to $\lF^\lm$ by a strictly positive factor. As a consequence, we have
$$(z|e)\EL_e^\lm(z)=\S_i \la_i\ \EL_e^{\lm+\Le_i}(z)$$
with $\la_i>0$ whenever $\lm+\Le_i$ is a partition. This shows $((z|e)\EL_e^\lm)^{\lm+\Le_i}\ne 0.$ 
\end{proof}

\begin{lemma}\label{m} Let $\lm>\ll$ be partitions. Then there exists a partition $\ln\ge\ll$ such that 
$\lm=\ln+\Le_j$ for some $j\le r.$
\end{lemma}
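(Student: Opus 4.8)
The statement to prove is Lemma \ref{m}: if $\lm > \ll$ are partitions, then there exists a partition $\ln \ge \ll$ with $\lm = \ln + \Le_j$ for some $j \le r$.

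\begin{proof}[Proof proposal]
The plan is to produce $\ln$ by decreasing a single coordinate of $\lm$ by one, choosing the coordinate carefully so that the result is still a partition and still dominates $\ll$. Concretely, since $\lm > \ll$ means $\lm_i \ge \ll_i$ for all $i$ with strict inequality somewhere, let $j$ be the \emph{largest} index for which $\lm_j > \ll_j$. I claim $\ln := \lm - \Le_j$ works. First, $\ln \ge \ll$: for $i \ne j$ we have $\ln_i = \lm_i \ge \ll_i$, while for $i = j$ we have $\ln_j = \lm_j - 1 \ge \ll_j$ because $\lm_j > \ll_j$ forces $\lm_j \ge \ll_j + 1$. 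Second, $\ln$ is a partition, i.e. $\ln_1 \ge \ln_2 \ge \dots \ge \ln_r \ge 0$: the only inequalities in this chain that could be violated by lowering the $j$-th entry are $\ln_{j-1} \ge \ln_j$ and $\ln_j \ge \ln_{j+1}$. The latter reads $\lm_j - 1 \ge \lm_{j+1}$ (or $\lm_j - 1 \ge 0$ if $j = r$); by maximality of $j$ we have $\lm_{j+1} = \ll_{j+1}$ when $j < r$, and since $\lm$ is a partition $\lm_j \ge \lm_{j+1} = \ll_{j+1}$, but we also need this with the $-1$, so here is where a small argument is needed.

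The subtle point is precisely whether $\lm_j - 1 \ge \lm_{j+1}$. If $j < r$, maximality of $j$ gives $\lm_{j+1} = \ll_{j+1} \le \lm_{j+1}$, which is vacuous; instead I should argue as follows. Suppose for contradiction $\lm_j - 1 < \lm_{j+1}$, i.e. $\lm_j = \lm_{j+1}$ (using $\lm_j \ge \lm_{j+1}$). Then $\lm_{j+1} = \lm_j > \ll_j \ge \ll_{j+1}$, so $\lm_{j+1} > \ll_{j+1}$, contradicting the maximality of $j$. Hence $\lm_j - 1 \ge \lm_{j+1}$, and when $j = r$ the condition $\lm_r - 1 \ge 0$ holds because $\lm_r \ge \ll_r + 1 \ge 1$. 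The inequality $\ln_{j-1} \ge \ln_j$ is automatic since $\ln_{j-1} = \lm_{j-1} \ge \lm_j > \lm_j - 1 = \ln_j$. Therefore $\ln$ is a partition with $\ln \ge \ll$ and $\lm = \ln + \Le_j$, completing the proof.
\end{proof}

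The only real obstacle here is bookkeeping: choosing the index $j$ so that decreasing $\lm_j$ does not break the weakly-decreasing condition, and I expect the clean choice to be the largest index where $\lm$ strictly exceeds $\ll$; the verification that this choice keeps $\ln$ a partition is the single nontrivial step, and it rests on the contradiction argument above.
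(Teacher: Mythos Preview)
Your proof is correct and follows exactly the same idea as the paper: take $j$ to be the largest index with $\lm_j>\ll_j$ and set $\ln=\lm-\Le_j$. The paper's version is more compact (it simply records the chain $\lm_1\ge\cdots\ge\lm_j>\ll_j\ge\ll_{j+1}=\lm_{j+1}\ge\cdots$), but your explicit contradiction argument for $\lm_j-1\ge\lm_{j+1}$ is the same observation written out in full.
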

\begin{proof} We have $\lm_i>\ll_i$ for some $i\le r.$ Put $j:=\max\{i\le r:\ \lm_i>\ll_i\}.$ Then 
$$\lm_1\ge\;\ge\lm_j>\ll_j\ge\ll_{j+1}=\lm_{j+1}\ge\ll_{j+2}=\lm_{j+2}\ge\;\ge\ll_r=\lm_r.$$
It follows that $\ln:=\lm-\Le_j$ is a partition with $\ln\ge\ll.$ 
\end{proof}

The {\bf proof of Theorem \ref{i}} is now completed by showing that $J\ic J^\ll,$ i.e., $\PL_E^\lm\ic J^\ll$ for all partitions 
$\lm\ge\ll.$ We use induction over $|\lm|.$ If $\lm=\ll,$ the assertion is trivial. If $\lm>\ll$ there exists a partition 
$\ln\ge\ll$ such that $\PL_E^\lm$ is spanned by terms $(\l q)^\lm,$ with $\l\in E^*$ and $q\in\PL_E^\ln.$ Since 
$|\ln|=|\lm|-1,$ the induction hypothesis implies $\PL_E^\ln\ic J^\ll.$ Applying \er{10} to the character $\lc_\lm$ of 
$\PL_E^\lm$ yields
$$(\l q)^\lm=\I_K dk\ \lc_\lm(k)\ (\l q)\oc k=\I_K dk\ \lc_\lm(k)\ (\l\oc k)(q\oc k).$$
Since $q\oc k\in\PL_E^\ln\ic J^\ll$ for all $k\in K$ and $J^\ll$ is an ideal, it follows that $(\l q)^\lm\in J^\ll$ (the integral is actually performed in a finite-dimensional subspace of $\PL_E$). Therefore $\PL_E^\lm\ic J^\ll.$ This completes the induction step and proves $J\ic J^\ll.$ 

\begin{corollary}\label{T} Let $\ll,\lm$ be partitions. Then $J^\lm\ic J^\ll$ if and only if $\lm\ge\ll.$ 
\end{corollary}
\begin{proof} If $\lm\ge\ll$ then for any partition $\ln\ge\lm$ we have $\ln\ge\ll$ and hence $\PL_E^\ln\ic J^\ll$ by Theorem \ref{i}. Since $\ln$ is arbitrary, it follows that $J^\lm\ic J^\ll.$ Conversely, if $J^\lm\ic J^\ll$ then $\PL_E^\lm\ic J^\ll$ and hence $\lm\ge\ll.$
\end{proof}

As a first application of Theorem \ref{i} we show that for any partition $\ll$ the ideal $J^\ll$ can be written as an intersection of ideals defined by "rectangular" partitions. For $n\in\Nl,$ put
$$n^{(m)}=(n,\;,n,0,\;,0),$$
with $n$ repeated $m$ times. Thus the Young diagram $[n^{(m)}]=[1,m]\xx[1,n].$ Any partition can be written in the form
\be{16}\ll=(n_1^{(\l_1)},n_2^{(\l_2-\l_1)},\;,n_t^{(\l_t-\l_{t-1})},0^{(r-\l_t)}),\ee
where $1\le \l_1<\;<\l_t\le r,\q n_1>n_2>\;>n_t>0.$ In other words,
$$\ll_1=\;=\ll_{\l_1}=n_1>\ll_{1+\l_1}=\;=\ll_{\l_2}=n_2>\;$$

Thus we have $n_s=\ll_j$ for $\l_{s-1}<j\le\l_s.$ In particular, $n_s=\ll_{\l_s}.$ The Young diagram
\be{17}[\ll]=\U_{s=1}^t[n_s^{(\l_s)}]=\U_{s=1}^t[1,\l_s]\xx[1,n_s]\ee
is a (non-disjoint) union of rectangular diagrams.

\begin{proposition}\label{n} Writing $\ll$ in the form \er{16} for "rectangular" partitions $n_s^{(\l_s)}$ we have
$$J^\ll=\C_{s=1}^t J^{n_s^{(\l_s)}}.$$
\end{proposition}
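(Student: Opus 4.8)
The plan is to reduce the set-equality of ideals to the Peter--Weyl level using Theorem~\ref{i}. Since every ideal $J^\nu$ decomposes as $\sum_{\mu\ge\nu}^{\oplus}\PL_E^\mu$ and the intersection of subspaces of a fixed multiplicity-free decomposition is computed combinatorially on indices, it suffices to prove the equality of index sets
$$\{\mu:\mu\ge\ll\}=\bigcap_{s=1}^t\{\mu:\mu\ge n_s^{(\l_s)}\}.$$
First I would record that $J^\ll=\sum_{\mu\ge\ll}^{\oplus}\PL_E^\mu$ by Theorem~\ref{i}, and likewise for each rectangular partition $n_s^{(\l_s)}$; moreover a general fact about Hilbert-space (or vector-space) direct sums indexed by an orthogonal family is that $\bigcap_s\big(\sum_{\mu\in A_s}^{\oplus}\PL_E^\mu\big)=\sum_{\mu\in\bigcap_s A_s}^{\oplus}\PL_E^\mu$, because the $\PL_E^\mu$ are pairwise inequivalent $K$-modules hence linearly independent as subspaces. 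So the whole proposition collapses to the purely order-theoretic claim above, which is then checked entry by entry.

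The order-theoretic claim is the combinatorial heart, and it is essentially the observation that a Young diagram is the union of its maximal rectangles. Spelled out: $\mu\ge n_s^{(\l_s)}$ means $\mu_i\ge n_s$ for all $i\le\l_s$. So $\mu$ lies in the intersection over $s$ iff $\mu_i\ge n_s$ whenever $i\le\l_s$, for every $s=1,\dots,t$. Given an index $i$ with $1\le i\le r$, let $s(i)$ be the smallest $s$ with $\l_s\ge i$ (if $i>\l_t$ there is no constraint, consistent with $\ll_i=0$). Since $n_1>n_2>\dots>n_t$, the binding constraint at position $i$ is $\mu_i\ge n_{s(i)}$, and by the description~\er{16} of $\ll$ we have precisely $\ll_i=n_{s(i)}$ for $\l_{s(i)-1}<i\le\l_{s(i)}$ and $\ll_i=0$ for $i>\l_t$. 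Hence the intersection of all the rectangular constraints is exactly $\mu_i\ge\ll_i$ for all $i$, i.e. $\mu\ge\ll$. Conversely $\mu\ge\ll$ immediately gives $\mu_i\ge\ll_i=n_s$ for $\l_{s-1}<i\le\l_s$, and since $n_s\le n_{s'}$ fails only for $s'>s$ one checks $\mu_i\ge n_s$ for all $i\le\l_s$; so $\mu\ge n_s^{(\l_s)}$ for each $s$. This is just a restatement of the identity~\er{17} expressing $[\ll]$ as the union of the rectangles $[1,\l_s]\times[1,n_s]$: containment of a diagram $[\mu]$ over $[\ll]$ is equivalent to containment over each piece of the union.

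Assembling the two parts: the index sets coincide, so by the direct-sum intersection remark the subspaces coincide, and by Theorem~\ref{i} these subspaces are exactly the ideals in question, giving $J^\ll=\bigcap_{s=1}^t J^{n_s^{(\l_s)}}$. I do not expect a serious obstacle here; the only point that needs a careful sentence rather than a wave of the hand is the claim that intersection of "Peter--Weyl subspaces" is taken index-wise, which follows from multiplicity-freeness (equivalently, from the fact that distinct $\PL_E^\mu$ have zero intersection and their sum is direct). Everything else is the bookkeeping of~\er{16}--\er{17}. One could alternatively prove $\subseteq$ by noting $n_s^{(\l_s)}\le\ll$ for each $s$, whence $J^\ll\subseteq J^{n_s^{(\l_s)}}$ by Corollary~\ref{T}, and then get $\supseteq$ from Theorem~\ref{i} applied to a $\mu$ in the intersection; that route avoids the abstract direct-sum lemma and uses only results already in the text, so I would likely present it that way.
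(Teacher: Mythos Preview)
Your proposal is correct and is essentially the same as the paper's proof, which simply says ``This follows from Corollary~\ref{T} and~\er{17}'': both reduce the ideal equality via Theorem~\ref{i} to the combinatorial identity that $\mu\ge\ll$ iff $\mu\ge n_s^{(\l_s)}$ for every $s$, which is exactly the Young-diagram union~\er{17}. Your suggested alternative at the end (one inclusion from Corollary~\ref{T}, the other from Theorem~\ref{i}) is precisely the paper's one-line argument.
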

\begin{proof} This follows from Corollary \ref{T} and \er{17}. 
\end{proof}

\begin{proposition}\label{u} The "maximal fibre" at $\lz=0$ is given by 
$$\y H_0=\PL_E^\ll.$$
\end{proposition}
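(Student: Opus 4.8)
Since $H=\o{J^\ll}$, Theorem \ref{i} gives the Peter--Weyl decomposition $H=\S_{\lm\ge\ll}^\op\PL_E^\lm$ (Hilbert sum), and, as in the proof of Lemma \ref{a}, $\y H_0=(\ML_0 H)^\perp$ inside $H$. The plan is to show that the submodule $\o{\ML_0 H}$ is precisely the Hilbert sum $\S_{\lm>\ll}^\op\PL_E^\lm$ of all strictly larger Peter--Weyl pieces; taking orthogonal complements in $H$ then yields $\y H_0=\PL_E^\ll$, because $\ll$ is the unique partition with $\lm\ge\ll$ but $\lm\not>\ll$.

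First I would observe that $\ML_0 H$ is a $K$-invariant linear subspace of $H$: the maximal ideal $\ML_0=\S_{\lm\ne0}\PL_E^\lm$ is $K$-invariant since $K$ fixes the origin, and $H$ is $K$-invariant by hypothesis, so $k\.(p\lq)=(p\oc k^{-1})(\lq\oc k^{-1})\in\ML_0 H$. By multiplicity-freeness of the Peter--Weyl decomposition, $\o{\ML_0 H}$ is then a Hilbert sum of some of the pieces $\PL_E^\lm$ with $\lm\ge\ll$, and it only remains to decide which ones occur. For the "upper" estimate a degree count suffices: any element of $\ML_0 H$ is a finite sum $\S_k p_k\lq_k$ with $p_k\in\ML_0$ and $\lq_k\in H$; since $\lm\ge\ll$ together with $|\lm|=|\ll|$ forces $\lm=\ll$, each $\lq_k$ has vanishing homogeneous components of degree $<|\ll|$, and multiplying by the constant-term-free $p_k$ raises all degrees by at least $1$, so $p_k\lq_k$ has no homogeneous component of degree $\le|\ll|$. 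Hence the continuous projection $H\to\PL_E^\ll$ annihilates $\ML_0 H$, and therefore $\o{\ML_0 H}$, so that $\PL_E^\ll\ui\o{\ML_0 H}=0$; equivalently, this already gives $\PL_E^\ll\ic\y H_0$.

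For the "lower" estimate --- the heart of the matter --- fix a partition $\lm>\ll$. By Lemma \ref{m} there is a partition $\ln\ge\ll$ and an index $j\le r$ with $\lm=\ln+\Le_j$; by Theorem \ref{i} we then have $\PL_E^\ln\ic J^\ll\ic H$, so $\l q\in\ML_0 H$ for every $\l\in E^*$ and $q\in\PL_E^\ln$. Since $\ML_0 H$ is $K$-invariant, applying the character-integral projection $\lp^\lm$ of \er{10} to $\l q$ yields an element $(\l q)^\lm\in\o{\ML_0 H}$, and by Lemma \ref{l} these vectors span the entire finite-dimensional space $\PL_E^\lm$; hence $\PL_E^\lm\ic\o{\ML_0 H}$. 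Combining the two estimates gives $\o{\ML_0 H}=\S_{\lm>\ll}^\op\PL_E^\lm$, whence $\y H_0=(\ML_0 H)^\perp=\PL_E^\ll$. I expect the lower estimate to be the only genuine obstacle: one must invoke the harmonic-analysis input of Lemmas \ref{l} and \ref{m} --- ultimately the Pieri rule, whose coefficients one checks to be strictly positive --- to be sure that multiplying the smaller piece $\PL_E^\ln$ by linear forms hits \emph{all} of the irreducible $K$-module $\PL_E^\lm$ and not merely a proper submodule; by contrast the inclusion $\PL_E^\ll\ic\y H_0$ is just the elementary degree count above.
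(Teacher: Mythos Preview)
Your proof is correct and follows essentially the same route as the paper's: both directions rest on identifying $\y H_0=(\ML_0 H)^\perp$ and, for the nontrivial inclusion, on Lemmas~\ref{m} and~\ref{l} together with the character projection \er{10} to show that every $\PL_E^\lm$ with $\lm>\ll$ lies in $\o{\ML_0 H}$. The only cosmetic difference is in the easy inclusion, where the paper uses the Pieri relation \er{12} to show $T_\l^*p=0$ directly, while you use an equivalent degree count.
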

\begin{proof} We first show the easy inclusion $\PL_E^\ll\ic\y H_0.$ Let $p\in\PL_E^\ll$ and $\lm\ge\ll.$ Then for all 
$q\in\PL_E^\lm$ and $\l\in E^*$ we have
$$(T_\l^*p|q)_H=(p|\l q)_H=\S_i(p|(\l q)^{\lm+\Le_i})_H=0$$
since $\lm+\Le_i>\lm\ge\ll$ is different from $\ll.$ Since $T_\l^*p\in H$ and $\lm\ge\ll$ is arbitrary, it follows that 
$T_\l^*p=0.$ 

For the converse, let $f=\S_{\lm\ge\ll}f^\lm\in\y H_0.$ For any partition $\lm>\ll$ there exists a partition 
$\ln\ge\ll$ such that $\PL_E^\lm$ is spanned by terms $(\l q)^\lm,$ where $\l\in E^*$ and $q\in\PL_E^\ln.$ Then
$$(f^\lm|(\l q)^\lm)_H=(f^\lm|\l q)_H=(T_\l^*f^\lm|q)_H=0$$
since $q\in J^\ll$ and $f^\lm\in\y H_0$ by Lemma \ref{f}. It follows that $f^\lm$ is orthogonal to $\PL_E^\lm$ and hence vanishes. Therefore $f=f^\ll\in\PL_E^\ll.$
\end{proof}

\section{Normal projections} 
An irreducible Jordan algebra $E$ with unit element $e$ has a unique {\bf determinant polynomial} $\lD_e:E\to\Cl$ normalized by 
$\lD_e(e)=1$ \cite{FK2,N}. For the matrix algebra $E=\Cl^{r\xx r}$ and the symmetric matrices $E=\Cl_{\x{sym}}^{r\xx r}$ this is the usual determinant. For the antisymmetric matrices $E=\Cl_{\x{asym}}^{2r\xx 2r}$ we obtain the Pfaffian determinant instead. For the spin factor (of rank 2) we have $\lD_e(z)=z\.z=\S_i z_i^2.$ 

The determinant polynomial $\lD_e$ has the semi-invariance property
\be{18}\lD_e(kz)=\lD_e(ke)\lD_e(z)\ee  
for all $k\in K$ and $z\in E.$ The map $\lc:K\to\Tl$ defined by
$$\lc(k):=\lD_e(ke)$$
is a character of $K.$ It follows that for any $k\in K$
$$\lD_{ke}(z):=\lD_e(k^{-1}z)$$
is a Jordan determinant normalized at $ke.$ 

Now consider a frame $e_1,\;,e_r$ and for $1\le m\le r$ put $[m]:=\{1,\;,m\}$ and
$$e_{[m]}:=e_1+\;+e_m\in S_m.$$ 
Let $\lD_{[m]}$ denote the Jordan determinant of the Peirce 2-space $E_{[m]}=E_{e_{[m]}}$ and define the $m$-th 
{\bf Jordan theoretic minor} $N_m$ by
\be{34}N_m(z):=\lD_{[m]}(P_{[m]}z)\ee
where $P_{[m]}$ is the Peirce projection onto $E_{[m]}.$ For any partition $\ll\in\Nl_+^r$ define the 
{\bf conical polynomial}
$$N^\ll:=N_1^{\ll_1-\ll_2}\.N_2^{\ll_2-\ll_3}\:N_r^{\ll_r}.$$
The irreducible $K$-module $\PL_E^\ll$ is the linear span of such polynomials (for various frames) since the highest weight vector is of this form \cite{U1}.  

As a crucial step towards identifying the eigenbundle of the ideals $J^\ll$ (or a Hilbert completion $\o J^\ll$) we consider certain projection mappings. Let $0\le\l\le r$ and consider a rank $\l$ tripotent $c\in S_\l.$ Its Peirce decomposition will be denoted by 
\be{25}E=E_c^2\op E_c^1\op E_c^0=U\op V\op W.\ee
In the matrix case $E=\Cl^{r\xx s},$ with $c=\bb{1_\l}000,$ this corresponds to the decomposition 
\be{26}z=\bb u{v_0}{v^0}w\in\bb UVVW\ee
of $z\in E$ as a block-matrix with $u$ of size $\l\xx\l.$ The Kepler manifold $\c E_\l$ has the tangent space 
$$T_c(\c E_\l)=E_c^2\op E_c^1=U\op V$$
and hence the normal space 
$$T_c^\perp(\c E_\l)=E_c^0=W.$$
Define the {\bf normal projection}
\be{20}\lp_c:\PL_E\to\PL_W,\q\lp_c f(w):=f(c+w)\ee
for $f\in\PL_E$ and $w\in W.$ If $\l=0$ then $\lp_0$ is the identity map.

\begin{lemma}\label{q} Let $\ll\in\Nl_+^r$ and $f\in\PL_E^\ll.$ Then $f|_W=0$ if $\ll_{r-\l+1}>0,$ and 
$f|_W\in\PL_W^{\ll_1,\;,\ll_{r-\l}}$ if $\ll_{r-\l+1}=0.$
\end{lemma}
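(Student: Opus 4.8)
The plan is to decompose the restriction map $f \mapsto f|_W$ using the $K$-structure of $\PL_E^\ll$ together with the Peirce geometry of the tripotent $c$. First I would reduce to the case of a diagonal frame tripotent: since $S_\l$ is a single $K$-orbit and both hypothesis and conclusion are $K$-equivariant (replacing $c$ by $kc$ replaces $W$ by $kW$ and $f|_W$ by $(f\circ k^{-1})|_{kW}$), it suffices to treat $c = e_{[\l]} = e_{r-\l+1} + \cdots + e_r$ built from a fixed frame $e_1, \ldots, e_r$, so that $W = E_c^0 = E_{[r-\l]}$ is itself an irreducible Jordan subtriple of rank $r-\l$, with its own frame $e_1, \ldots, e_{r-\l}$ and the same characteristic multiplicity $a$.

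The key structural fact I would invoke is that $\PL_E^\ll$ is spanned by the conical polynomials $N^\ll$ attached to the various frames, as recalled just before the lemma (the highest weight vector has this form), and more usefully that every $f \in \PL_E^\ll$ is a $K$-combination of the single conical polynomial for one fixed frame. Concretely, $N^\ll = N_1^{\ll_1-\ll_2} \cdots N_r^{\ll_r}$ where $N_m(z) = \lD_{[m]}(P_{[m]}z)$. The crucial computation is then: how does $N_m$ restrict to $W = E_{[r-\l]}$? Since $P_{[m]}$ is the Peirce-2 projection for $e_{[m]} = e_1 + \cdots + e_m$ and $W$ consists of elements supported on $e_1, \ldots, e_{r-\l}$, one checks that for $w \in W$, $P_{[m]}w = P_{[\min(m,r-\l)]}w$, so $N_m|_W = N_{\min(m,r-\l)}^W$, the analogous minor for the Jordan triple $W$ — in particular $N_m|_W = N_{r-\l}^W$ (the full determinant of $W$) for all $m \ge r-\l$, and $N_m|_W = N_m^W$ for $m \le r-\l$. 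Plugging into the product gives $N^\ll|_W = \prod_{m=1}^{r-\l} (N_m^W)^{\ll_m - \ll_{m+1}} \cdot (N_{r-\l}^W)^{\sum_{m > r-\l}(\ll_m - \ll_{m+1})} = \prod_{m=1}^{r-\l-1}(N_m^W)^{\ll_m-\ll_{m+1}} \cdot (N_{r-\l}^W)^{\ll_{r-\l}}$ precisely when $\ll_{r-\l+1} = 0$ (the telescoping tail collapses), so $N^\ll|_W = N^{(\ll_1,\ldots,\ll_{r-\l})}_W \in \PL_W^{\ll_1,\ldots,\ll_{r-\l}}$; and when $\ll_{r-\l+1} > 0$ one of the factors $N_m^{\ll_m - \ll_{m+1}}$ with $m \ge r-\l+1$... — here I must be careful: actually the vanishing in the case $\ll_{r-\l+1} > 0$ comes not from a single conical polynomial but from the following observation. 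Since $\PL_E^\ll \cong \PL_E^\ll$ is generated over $K$ by conical polynomials for frames containing $e_1, \ldots, e_{r-\l}$ as a partial frame and one other minimal tripotent $e'$ orthogonal to them with $P^2_{[r-\l+1]}$ involving $e'$; for such $f$ with $\ll_{r-\l+1}>0$, the factor $N_{r-\l+1}^{\ll_{r-\l+1}-\ll_{r-\l+2}} \cdots$ contains $\lD_{[r-\l+1]}(P_{[r-\l+1]}(\cdot))$ which vanishes on $W$ since $P_{[r-\l+1]}w$ has rank $\le r-\l < r-\l+1$ inside $E_{[r-\l+1]}$, forcing its determinant to be zero. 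Taking $K$-spans of these preserves the property $f|_W = 0$.

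The main obstacle I anticipate is making the last point fully rigorous: one needs that $\PL_E^\ll$ is spanned by conical polynomials $N^\ll_{(f)}$ over frames $f = (f_1, \ldots, f_r)$ that can be chosen to contain the fixed partial frame $e_1, \ldots, e_{r-\l}$ (equivalently, that $N^\ll$ for the fixed frame together with its $K$-translates by the subgroup stabilizing $E_{[r-\l]}$ already span a $K$-module whose $K$-orbit is all of $\PL_E^\ll$). This follows from irreducibility: the $K$-span of any single conical polynomial is all of $\PL_E^\ll$, so in particular the $K$-span of $N^\ll$ for the fixed frame is everything, and then writing an arbitrary $f \in \PL_E^\ll$ as $f = \int_K c(k)\, N^\ll \circ k^{-1}\, dk$ via the character integral formula \er{10} and restricting to $W$ commutes with the integral — but $N^\ll \circ k^{-1}$ restricted to $W$ is the restriction of the conical polynomial for the frame $k e_1, \ldots, k e_r$, and I need a uniform statement over all frames, not just those adapted to $W$. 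The clean fix is to argue directly: for $f \in \PL_E^\ll$, the function $f|_W$ lies in $\PL_W$ and I claim it is $K^W$-isotypic of the required type, where $K^W$ is the automorphism group of $W$; indeed the stabilizer in $K$ of the tripotent $c$ maps onto (a group containing) $K^W$ under restriction, and $f \mapsto f|_W$ intertwines the $K_c$-action on $\PL_E^\ll$ with the $K^W$-action on $\PL_W$, so the image is a sum of $K^W$-modules $\PL_W^\mu$; a weight computation with the torus of the frame $e_1, \ldots, e_r$ shows every weight of $f|_W$ is a weight of $\PL_E^\ll$ supported on the first $r-\l$ coordinates, which pins down $\mu = (\ll_1, \ldots, \ll_{r-\l})$ when $\ll_{r-\l+1} = 0$ and forces $f|_W = 0$ otherwise because then the highest weight of $\PL_E^\ll$ already has nonzero $(r-\l+1)$-st component, incompatible with any weight supported on the first $r-\l$ coordinates after restriction. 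I would present the conical-polynomial computation as the main line and this weight argument as the rigorous backbone.
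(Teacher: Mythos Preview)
Your approach is genuinely different from the paper's, and while the underlying idea is sound, there is a real gap in the final step.

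First, a minor correction: for $m>r-\l$ one has $N_m|_W=0$, not $N_{r-\l}^W$. Indeed $P_{[m]}w$ lies in the rank $r-\l$ subalgebra $E_{[r-\l]}\subset E_{[m]}$ for $w\in W$, so $\lD_{[m]}(P_{[m]}w)=0$. This does not affect your conclusion for the single conical polynomial $N^\ll$ (since in the case $\ll_{r-\l+1}=0$ those factors carry exponent zero, and in the case $\ll_{r-\l+1}>0$ at least one such factor has positive exponent, forcing $N^\ll|_W=0$), but the intermediate formula is wrong.

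The substantive gap is in passing from the single conical polynomial $N^\ll$ to all of $\PL_E^\ll$. You correctly observe that the restriction map $f\mapsto f|_W$ is equivariant for the stabiliser $K_c$ and hence its image is a $K_W$-submodule of $\PL_W$, so a direct sum of pieces $\PL_W^\lm$. Your torus argument shows that only weight vectors with vanishing last $\l$ components survive restriction. But this alone does \emph{not} pin down a single $K_W$-type $\lm=(\ll_1,\ldots,\ll_{r-\l})$: the zero-weight space for the last $\l$ torus factors inside $\PL_E^\ll$ is generally not $K_W$-irreducible, and knowing that the surviving $T$-weights are truncations of weights of $\PL_E^\ll$ does not by itself exclude other partitions $\lm$ of $|\ll|$. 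To finish along these lines you would need an explicit branching statement for $\PL_E^\ll$ under $K_c$, which is exactly the nontrivial content of the lemma.

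The paper sidesteps this entirely with a three-line argument using the Fischer--Fock kernel: since $\PL_E^\ll$ is spanned by the functions $z\mapsto\EL^\ll(z,b)$ for $b\in E$, and since the Peirce projection $P_W$ satisfies $\EL^\ll(P_W w,b)=\EL^\ll(w,P_Wb)$ by $K$-invariance, one is reduced to evaluating $\EL^\ll$ at a second argument $P_Wb\in W$ of rank $\le r-\l$. If $\ll_{r-\l+1}>0$ this forces vanishing; if $\ll_{r-\l+1}=0$ the resulting function of $w$ is manifestly $\EL_W^{(\ll_1,\ldots,\ll_{r-\l})}(w,P_Wb)\in\PL_W^{\ll_1,\ldots,\ll_{r-\l}}$. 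No branching, no weight bookkeeping.
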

\begin{proof} We may assume that $f(z)=\EL^\ll(z,b)$ for some $b\in E.$ If $w\in W$ then
$$f(w)=\EL^\ll(w,b)=\EL^\ll(P_W w,b)=\EL^\ll(w,P_W b),$$
where $P_W$ is the Peirce 0-projection onto $W=E^0_c.$ If $\ll_{r-\l+1}>0$ then $\EL^\ll(w,P_W b)=0$ since $W$ has rank 
$r-\l.$ If $\ll_{r-\l+1}=0$ then $\ll=(\ll_1,\;,\ll_{r-\l},0^{(\l)})\equiv(\ll_1,\;,\ll_{r-\l})\in\Nl_+^{r-\l}$ and 
$\EL^\ll(w,P_W b)\in\PL_W^{\ll_1,\;,\ll_{r-\l}}.$
\end{proof}

Since $W=E^c$ is an irreducible $J^*$-triple of rank $r-\l,$ the polynomial algebra $\PL_W$ has its own Peter-Weyl decomposition 
\be{42}\PL_W=\S_{\la\in\Nl_+^{r-\l}}\PL_W^\la\ee 
with respect to the Jordan automorphism group $K_W$ of $W,$ ranging over all partitions 
$$\la=(\la_{\l+1},\;,\la_r)\in\Nl_+^{r-\l}$$ 
of length $r-\l.$ We can therefore consider the ideal 
$$J_W^{\ll^*}:=\S_{\la\in\Nl_+^{r-\l},\ \la\ge\ll^*}\PL_W^\la$$
generated by the "truncated" partition 
\be{22}\ll^*:=(\ll_{\l+1},\;,\ll_r)\ee 
of length $r-\l.$ The main result of this section is

\begin{theorem}\label{r} Let $\ll\in\Nl_+^r$ be a partition. Then for any tripotent $c\in S_\l$ with Peirce 0-space $W$ the 
normal projection \er{20} satisfies
$$J^\ll\xrightarrow{\lp_c}J_W^{\ll^*}.$$ 
In other words, if $f$ has only $K$-components for partitions $\ge\ll,$ then $\lp_c f$ has only $K_W$-components for partitions $\ge\ll^*.$
\end{theorem}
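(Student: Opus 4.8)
The plan is to peel off the Peirce decomposition one minimal tripotent at a time, reducing $\lp_c$ to a rank‑one normal projection, and to settle that case by a Pieri‑type computation with Fischer–Fock reproducing kernels. Concretely, write $c=c'+e$ with $c'\in S_{\l-1}$ and $e$ a minimal tripotent orthogonal to $c'$, so that $e$ lies in $W':=E_{c'}^0$ and is minimal there; then $W=E_c^0=(W')_e^0$, and decomposing $c+w$ first along the Peirce decomposition of $c'$ and then, inside $W'$, along that of $e$, the normal projections compose as $\lp_c=\lp_e^{W'}\oc\lp_{c'}$, where $\lp_e^{W'}:\PL_{W'}\to\PL_W$ is the normal projection in $W'$ at $e$. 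Inducting on $\l$ (the case $\l=0$ is trivial, $\lp_0=\x{id}$; the case $\l=1$ is the rank‑one case below), it therefore suffices to prove the theorem for a minimal tripotent in an arbitrary irreducible $J^*$‑triple, since then $\lp_{c'}(J^\ll)\ie J_{W'}^{(\ll_\l,\;,\ll_r)}$ and $\lp_e^{W'}\big(J_{W'}^{(\ll_\l,\;,\ll_r)}\big)\ie J_W^{(\ll_{\l+1},\;,\ll_r)}=J_W^{\ll^*}$.

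\textbf{The rank‑one case.} Let $e$ be minimal, $E=\Cl e\op V\op W$. Then $\lp_e:\PL_E\to\PL_W$ is a surjective algebra homomorphism — $\lp_e(fg)(w)=f(e+w)\,g(e+w)$, and $g=\lp_e(g\oc P_W)$ for every $g\in\PL_W$ — so, as $J^\ll=\PL_E\.\PL_E^\ll$ and $J_W^{\ll^*}$ is an ideal, it is enough to show $\lp_e(\PL_E^\ll)\ie J_W^{\ll^*}$. Factor $\lp_e=(\x{ev}_e\xt\x{id})\oc\lr$, where $\lr:\PL_E\to\PL_{\Cl e}\xt\PL_W$ is restriction to $\Cl e\op W$ and $\x{ev}_e$ is evaluation on $\Cl e$ at the unit $e$. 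The subgroup $K_{\Cl e}\xx K_W$ fixes $W$ and acts in the standard way on $\Cl e$, so $\lr(\PL_E^\ll)$ is invariant and splits into lines $\PL_{\Cl e}^{(n)}$ tensored with $K_W$‑types $\PL_W^\lb$; since $\x{ev}_e\ne0$ on each $\PL_{\Cl e}^{(n)}$, one gets $\lp_e(\PL_E^\ll)=\bigoplus_{\lb\in B}\PL_W^\lb$, where $B$ is the set of partitions $\lb$ for which $\PL_W^\lb$ occurs in $\lr(\PL_E^\ll)$. Passing to orthogonal complements for the Fischer–Fock inner product, and using that $\lr^*$ is the inclusion of the $V$‑independent polynomials, one finds: $\lb\in B$ exactly when $\PL_E^\ll$ is not orthogonal to $\x{span}\{\,z\mapsto q(P_{\Cl e}z)\,g(P_Wz)\ :\ q\in\PL_{\Cl e},\ g\in\PL_W^\lb\,\}$.

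\textbf{Identifying the kernel and concluding.} It remains to locate this subspace inside $\S_{n\ge0}\PL_E^{(n)}\.\PL_E^\lb$ (here $\PL_E^{(n)}$ is the one‑row isotype, partition $(n,0,\;,0)$). A one‑parameter subgroup of $\h K$ contracting $V\op W$ to $0$ while fixing $\Cl e$ (together with its adjoint) gives $\EL^{(n)}(z,e)=\EL^{(n)}(P_{\Cl e}z,e)=\x{const}\.(z|e)^n$, so the first factor is a polynomial in $(z|e)$ and $(z|e)^n\in\PL_E^{(n)}$; dually, a contraction fixing $W$ gives $\EL^\lb(z,b)=\x{const}\.\EL_W^\lb(P_Wz,b)$ for $b\in W$ (via Lemma \ref{q} and uniqueness of the reproducing kernel of $\PL_W^\lb$), so the second factor lies in $\PL_E^\lb$. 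Hence $\lb\in B$ forces $\PL_E^\ll$ into some product $\PL_E^{(n)}\.\PL_E^\lb$, which by the Pieri rule for $K$‑types (the iterate of \er{15}, in the form valid also when $E$ is not of tube type; cf.\ \cite{FK1}) contains only $\PL_E^\lm$ with $\lm/\lb$ a horizontal strip. Thus $\ll/\lb$ is a horizontal strip, i.e.\ $\ll_{i+1}\le\lb_i$ for all $i$, which is precisely $\lb\ge\ll^*$; therefore $B\ic\{\lb:\lb\ge\ll^*\}$ and $\lp_e(\PL_E^\ll)\ie J_W^{\ll^*}$.

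\textbf{Where the difficulty lies.} The formal parts — the two factorizations, the reduction to $\lp_e(\PL_E^\ll)$, and the description of $B$ — are routine bookkeeping. The genuine content (the ``kernel'' the introduction alludes to) is showing that the Peirce‑contractions really do confine the two factors to $\PL_E^{(n)}$ and $\PL_E^\lb$, so that no extraneous $K_W$‑type survives, and invoking the Pieri rule in the required generality. One can also bypass the rank‑one reduction and keep $c$ general: $\lb\in B$ iff $\PL_E^\ll$ meets a product $\PL_E^\la\.\PL_E^\lb$ with $\la$ having at most $\l$ rows, and then $c^\ll_{\la\lb}\ne0$ with $\la$ of $\le\l$ rows forces $\lb_i\ge\ll_{i+\l}$ (a Littlewood–Richardson estimate); this version, however, requires the full $K$‑type product rule rather than only Pieri.
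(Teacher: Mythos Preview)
Your induction on $\l$ via the factorization $\lp_c=\lp_e^{W'}\oc\lp_{c'}$ is exactly the paper's third step. The divergence is entirely in the rank-one case. The paper first reduces to \emph{rectangular} partitions $\ll=n^{(m)}$ using the intersection formula $J^\ll=\C_s J^{n_s^{(\l_s)}}$ (Proposition~\ref{n}), and for those computes $N_e(c+z)^n$ explicitly from Neher's expansion $\lD_e(\lx e_1+u)=\lD_e(u)+\lx\,\lD_{e-e_1}(P_{e-e_1}u)$, obtaining signatures $(n^{(m-1)},h,0^{(r-m)})$ directly. Your route is representation-theoretic: factor $\lp_e$ through restriction to $\Cl e\op W$, use the Fischer--Fock adjoint (together with Lemma~\ref{q}) to show that the relevant test functions lie in $\PL_E^{(n)}\cdot\PL_E^\lb$, and then invoke a Pieri rule to force $\ll/\lb$ to be a horizontal strip, hence $\lb\ge\ll^*$. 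Your argument is more conceptual and avoids both the rectangular reduction and the determinant identity; the paper's is fully self-contained, needing only the degree-one relation~\er{12} already established in Section~3.

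One genuine caution: the step ``$\PL_E^\ll\ic\PL_E^{(n)}\cdot\PL_E^\lb$ implies $\ll/\lb$ is a horizontal strip'' is the degree-$n$ Pieri rule for spherical (equivalently, Jack) polynomials, and it is \emph{not} the iterate of~\er{15}. Iterating the degree-one rule only gives $\ll\ge\lb$ with $|\ll|=|\lb|+n$, which is too weak: e.g.\ for $r=3$, $\ll=(3,2,1)$, $\lb=(3,0,0)$, three successive single-box additions do connect $\lb$ to $\ll$, yet $\ll/\lb$ contains two boxes in the first column and $\lb\not\ge\ll^*=(2,1)$. What you need is Stanley's Pieri formula for Jack symmetric functions~\cite{St} (the spherical polynomials being Jack polynomials with parameter $2/a$), transported to the $K$-type product via the unique $L$-fixed vector in each $\PL_E^\lm$; the reference~\cite{FK1} does not contain this. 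With that correction your proof stands.
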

\begin{proof} In the {\bf first step} assume that (i) $c$ is a minimal tripotent ($\l=1$) and (ii) $\ll=n^{(m)}$ has a rectangular Young diagram. In this case $\PL_E^\ll$ is spanned by polynomials of the form 
$$f(z)=N_e(z)^n=\lD_e(P_e z)^n$$ 
where $e\in E$ is a tripotent of rank $m$ and $\lD_e$ is the Jordan determinant of the Peirce 2-space $E_e,$ with Peirce 2-projection $P_e:E\to E_e.$ Regarding $E_e$ as a Jordan algebra with unit element $e$ it has been shown in \cite[Theorem 1]{N} that for a minimal projection $e_1\in E_e$
$$\lD_e(\lx e_1+u)=\lD_e(u)+\lx\lD_{e-e_1}(P_{e-e_1}u)$$
for all $u\in E_e$ and $\lx\in\Cl.$ Since $c$ is a minimal tripotent, $P_ec$ has rank $\le 1$ and hence there exists a minimal tripotent $c_1\in E_e$ such that $P_e c=\lx c_1$ where $\lx\in\Cl.$ Choose $k\in K$ commuting with $P_e$ such that $ke_1=c_1.$ Then $\lk:=k|_{E_e}\in K_{E_e}.$ For $z\in E$ we have
$$P_e(c+z)=\lx c_1+P_ez=\lk(\lx e_1+\lk^{-1}P_ez)=\lk(\lx e_1+P_e k^{-1}z)$$
and semi-invariance of $\lD_e$ implies
$$\f1{\lD_e(ke)}N_e(c+z)=\f1{\lD_e(ke)}\lD_e(P_e(c+z))=\f1{\lD_e(ke)}\lD_e(\lk(\lx e_1+P_e k^{-1}z))$$
$$=\lD_e(\lx e_1+P_e k^{-1}z)=\lD_e(P_e k^{-1}z)+\lx\lD_{e-e_1}(P_{e-e_1}P_e k^{-1}z)$$
$$=\lD_e(P_e k^{-1}z)+\lx\lD_{e-e_1}(P_{e-e_1}k^{-1}z)=N_e(k^{-1}z)+\lx\ N_{e-e_1}(k^{-1}z)$$
since $P_{e-e_1}P_e=P_{e-e_1}.$ Taking the $n$-th power it follows that $f(c+z)=N_e(c+z)^n$ is a linear combination of polynomials $(N_e^{n-h}\ N_{e-e_1}^h)\oc k^{-1},$ where $0\le h\le n.$ These have signature 
$(n^{(m-1)},h,0^{(r-m)}).$ By Lemma \ref{q} the restriction to $W=E^c,$ if not zero, has signature  
$$(n^{(m-1)},h,0^{(r-m-1)})\ge(n^{(m-1)},0,0^{(r-m-1)})=(n^{(m-1)},0^{(r-m)})=(\ll_2,\;,\ll_r).$$
It follows that $\lp_c f\in J_W^{(\ll_2,\;,\ll_r)}$ for all $f\in\PL_E^\ll$ and, a fortiori, for all $f\in J^\ll.$ Thus the assertion holds for minimal tripotents $c$ and rectangular partitions $\ll.$ We can also write the conclusion in the form
\be{70}J^{n^{(m)}}\xrightarrow{\lp_c}J_W^{n^{(m-1)}}.\ee

In the {\bf second step} assume only that $c$ is a minimal tripotent but $\ll\in\Nl_+^r$ is arbitrary. Using the representation
\er{17} we have (since $c$ has rank 1)
$$[\ll^*]=[(\ll_2,\;,\ll_r)]=\U_{s=1}^t[n_s^{(\l_s-1)}].$$ 
By step 1, we have $J^{n_s^{(\l_s)}}\xrightarrow{\lp_c}J_W^{n_s^{(\l_s-1)}}$ for each $s\le t.$ Proposition \ref{g} implies
$$J^\ll=\C_{s=1}^t J^{n_s^{(\l_s)}}\xrightarrow{\lp_c}\C_{s=1}^t J_W^{n_s^{(\l_s-1)}}=J_W^{\ll^*}.$$ 

For the {\bf general case} suppose, by induction, that the assertion is true for tripotents $c'$ of rank $\l-1.$ Let $c$ be a minimal tripotent orthogonal to $c'.$ Then 
$$E^{c+c'}=(E^c)^{c'}$$
and there is a commuting diagram
\be{21}\xymatrix{\PL_E\ar[r]^{\lp_{c}}\ar@/_2pc/[rr]_{\lp_{c+c'}}&\PL_{E^c}\ar[r]^{\lp_{c'}^{E^c}}&\PL_{E^{c+c'}}}\ee
where $\lp_{c'}^{E^c}$ denotes the $c'$-projection relative to $E^c.$ In fact, if $w\in E^{c+c'},$ then 
$c'+w\in E^c$ and
$$\lp_{c'}^{E^c}(\lp_c f)(w)=(\lp_c f)(c'+w)=f(c+(c'+w))=f((c+c')+w)=(\lp_{c+c'}f)(w).$$
By the second part of the proof we have
$$J^\ll\xrightarrow{\lp_c}J_{E^c}^{\ll_2,\;,\ll_r}.$$
The induction hypothesis applied to $E^c$ (of rank $r-1$) and the partition $(\ll_2,\;,\ll_r)$ yields
$$J_{E^c}^{\ll_2,\;,\ll_r}\xrightarrow{\lp_{c'}^{E^c}}J_{(E^c)^{c'}}^{\ll_{\l+1},\;,\ll_r}
=J_{E^{c+c'}}^{\ll_{\l+1},\;,\ll_r}.$$
With \er{21} we obtain $J^\ll\xrightarrow{\lp_{c+c'}}J_{E^{c+c'}}^{\ll_{\l+1},\;,\ll_r}.$ Thus the assertion holds for the tripotent $c+c'$ of rank $\l.$ An induction argument finishes the proof.
\end{proof}

As an {\bf example}, consider the fundamental partition $\ll=1^{(m)}.$ If $c\in S_\l$ with $\l\le m$ then 
${\ll^*}=(\ll_{\l+1},\;\ll_r)=1^{(m-\l)}$ is again a fundamental partition relative to $W=E^c.$ In general, let
$$\ML_X:=\{p\in\PL_E:\ p|_X=0\}=\C_{\lz\in X}\ML_\lz$$
denote the ideal associated with a variety $X\ic E.$ Then Theorem \ref{r} says
$$J^{1^{(m)}}=\ML_{\h E_{m-1}}\xrightarrow{\lp_c}J_W^{1^{(m-\l)}}=\ML_{\h W_{m-\l-1}}.$$
This is clear since $f|_{\h E_{m-1}}=0$ implies $(\lp_c f)(w)=f(c+w)=0$ for $w\in\h W_{m-\l-1},$ because
$$\x{rank}(c+w)=\x{rank}(c)+\x{rank}(w)=\l+\x{rank}(w)\le\l+(m-\l-1)=m-1.$$

\section{The Main Theorem}
Let $c$ be a tripotent of rank $\l,$ with Peirce 0-space $W=E^c.$ We combine the normal projection map $\lp_c$ with the Peter-Weyl projection $\lp_W^{\ll^*}:\PL_W\to\PL_W^{\ll^*}$ onto the lowest $K_W$-type \er{22}. This yields a map 
\be{23}J^\ll\xrightarrow{\lp_c^{\ll^*}}\PL_W^{\ll^*},\q\lp_c^{\ll^*}f:=\lp_W^{\ll^*}(\lp_c f)=(\lp_c f)^{\ll^*}.\ee

\begin{proposition}\label{y} For any tripotent $c\in S_\l$ we have
$$\ML_c J^\ll\ic\ker(\lp_c^{\ll^*}).$$
Thus we have an induced mapping
$$\y J_c^\ll=J^\ll/\ML_c J^\ll\xrightarrow{\lp_c^{\ll^*}}\PL_W^{\ll^*},\q f+\ML_c J^\ll\qi\lp_c^{\ll^*}f.$$
\end{proposition}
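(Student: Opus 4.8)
The plan is to show that for $f \in \mathcal M_c J^\ll$ the projected polynomial $\lp_c^{\ll^*} f$ vanishes, i.e. $(\lp_c f)^{\ll^*} = 0$. It suffices to treat a generator $f = g\, h$ with $g \in \mathcal M_c$ (so $g(c) = 0$) and $h \in J^\ll$, and to check that $(\lp_c(gh))^{\ll^*} = 0$. First I would observe that the normal projection $\lp_c$ is an algebra homomorphism from $\PL_E$ into $\PL_W$: since $\lp_c f(w) = f(c+w)$ is just restriction to the affine slice $c + W$, we have $\lp_c(gh) = (\lp_c g)(\lp_c h)$. Hence $\lp_c(gh) = (\lp_c g)\cdot(\lp_c h)$ with $\lp_c h \in J_W^{\ll^*}$ by Theorem \ref{r}.

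The key point is then to control the constant term of $\lp_c g$. Because $g(c) = 0$, the polynomial $\lp_c g \in \PL_W$ satisfies $(\lp_c g)(0) = g(c) = 0$, so $\lp_c g$ has no constant term; write $\lp_c g = \sum_{k \ge 1} q_k$ with $q_k \in \PL_W$ homogeneous of degree $k \ge 1$. Decomposing $\lp_c h = \sum_{\la \ge \ll^*} (\lp_c h)^\la$ into its $K_W$-isotypic parts (all with $|\la| \ge |\ll^*|$), the product $\lp_c(gh) = (\lp_c g)(\lp_c h)$ is a sum of terms $q_k \cdot (\lp_c h)^\la$, each homogeneous of degree $k + |\la| \ge 1 + |\ll^*| > |\ll^*|$. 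Therefore every homogeneous component of $\lp_c(gh)$ has degree strictly greater than $|\ll^*|$, whereas $\PL_W^{\ll^*}$ consists of polynomials homogeneous of degree exactly $|\ll^*|$. Consequently the $\PL_W^{\ll^*}$-component of $\lp_c(gh)$ vanishes, i.e. $\lp_c^{\ll^*}(gh) = 0$. Since such products $gh$ span $\mathcal M_c J^\ll$ and $\lp_c^{\ll^*}$ is linear, this gives $\mathcal M_c J^\ll \subseteq \ker(\lp_c^{\ll^*})$.

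The induced map on the quotient then exists by the universal property of the quotient: $\lp_c^{\ll^*}$ descends to $\y J_c^\ll = J^\ll/\mathcal M_c J^\ll \to \PL_W^{\ll^*}$, $f + \mathcal M_c J^\ll \mapsto \lp_c^{\ll^*} f$, which is well-defined precisely because the kernel contains $\mathcal M_c J^\ll$. I do not anticipate a serious obstacle here; the only subtlety is making sure the two ingredients are on firm footing — that $\lp_c$ is multiplicative (immediate from its definition as evaluation on $c+W$) and that $\lp_c h$ lies in $J_W^{\ll^*}$ and hence involves only partitions $\la \ge \ll^*$ with $|\la| \ge |\ll^*|$ (this is exactly the content of Theorem \ref{r} together with the fact that $\ll^* \in \Nl_+^{r-\l}$ so that $\la \ge \ll^*$ forces $|\la| \ge |\ll^*|$). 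The degree count then closes the argument.
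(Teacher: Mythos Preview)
Your proof is correct and follows essentially the same approach as the paper: reduce to $f=gh$, use multiplicativity of $\lp_c$, apply Theorem~\ref{r} to $\lp_c h$, and use $(\lp_c g)(0)=g(c)=0$. The only cosmetic difference is in the last step, where the paper invokes the Pieri rule \eqref{12} to conclude that $(\lp_c g)(\lp_c h)$ has only $K_W$-components of type strictly $>\ll^*$, while you use the simpler observation that every homogeneous piece has total degree $>|\ll^*|$; both arguments kill the $\ll^*$-component equally well.
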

\begin{proof} Let $g\in\ML_c$ and $h\in J^\ll.$ By Theorem \ref{r} $\lp_c h$ has only $K_W$-components of type 
$\lm\ge\ll^*.$ Since $g(c)=0$ we have $\lp_c g\in\ML_{W,0}.$ Therefore \er{12} implies that $\lp_c(gh)=(\lp_c g)(\lp_c h)$ has only $K_W$-components of type $\lm>\ll^*.$ Therefore $\lp_c^{\ll^*}(gh)=0.$
\end{proof}

The main result of this paper is 

\begin{theorem}\label{w} For each partition $\ll$ and each tripotent $c,$ with Peirce 0-space $W=E_c^0,$ the map \er{23}, mapping $f\in J^\ll$ to the lowest $K_W$-type of its normal projection $\lp_cf,$ is surjective and has the kernel $\ML_c J^\ll.$ Thus it induces an isomorphism
\be{24}\y J_c^\ll\xrightarrow[\ap]{\lp_c^{\ll^*}}\PL_W^{\ll^*}.\ee
\end{theorem}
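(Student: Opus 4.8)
The plan is to establish the two halves of the claim separately: surjectivity of $\lp_c^{\ll^*}$, and the reverse inclusion $\ker(\lp_c^{\ll^*})\ic\ML_c J^\ll$ (the inclusion $\ML_c J^\ll\ic\ker(\lp_c^{\ll^*})$ being already Proposition \ref{y}). Surjectivity should be the easy part. By Proposition \ref{u} (applied with $H$ replaced by the purely algebraic module $J^\ll$, or directly by Theorem \ref{i}) the subspace $\PL_E^\ll\ic J^\ll$ maps onto $\PL_W^{\ll^*}$ under $\lp_c^{\ll^*}$: indeed for $f\in\PL_E^\ll$ one has $\lp_c f\in\PL_W$ with lowest $K_W$-component in $\PL_W^{\ll^*}$ by Theorem \ref{r}, and one needs only exhibit one $f$ with $\lp_c^{\ll^*}f\ne 0$ and then invoke $K_W$-irreducibility of $\PL_W^{\ll^*}$ together with the equivariance $\lp_c^{\ll^*}(f\oc k)=(\lp_c^{\ll^*}f)\oc k$ for $k\in K_W\ic K^c$ (here using that $K_W$-elements extend to elements of $\h K$ fixing $c$, so that $f\oc k\in J^\ll$ again). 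To produce a nonzero value, take the conical polynomial $N^\ll$ for a frame adapted to $c$, i.e. with $e_1+\;+e_\l=c$; then $\lp_c N^\ll$ is, up to a nonzero constant, the conical polynomial $N^{\ll^*}_W$ on $W$ for the complementary frame, whose lowest $K_W$-type is nonzero.

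The substantive part is the kernel computation. Since by Proposition \ref{y} the quotient map factors through $\y J_c^\ll=J^\ll/\ML_c J^\ll$, and since Proposition (the rank bound, the proposition after Lemma \ref{c}) shows $\dim\y J_c^\ll<\oo$, it suffices to prove the dimension inequality $\dim\y J_c^\ll\le\dim\PL_W^{\ll^*}=d_{\ll^*}^W$; combined with surjectivity this forces the induced map to be an isomorphism and hence its kernel to be exactly $\ML_c J^\ll$. So the real target is: \emph{every element of $J^\ll$ is congruent mod $\ML_c J^\ll$ to an element of a space of dimension $\le d^W_{\ll^*}$.} The natural candidate for a set of representatives is a lift of $\PL_W^{\ll^*}$: for instance, fixing the Peirce decomposition $E=U\op V\op W$ at $c$, one wants to show that $J^\ll=\PL_E^\ll+\ML_c J^\ll$ — or more precisely that $J^\ll$ modulo $\ML_c J^\ll$ is spanned by the images of finitely many polynomials depending only on the $W$-variable with signature $\ll^*$.

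The mechanism I would use to contract a general $f\in J^\ll$ is a filtration/Taylor-expansion argument in the normal directions. Write a general element of $J^\ll$ and expand it around $c$ along $U\op V$ (the tangent directions) versus $W$; every monomial that carries a positive power of a $U\op V$-coordinate can be traded, modulo $\ML_c J^\ll$, for a lower-order term, because $\ML_c$ contains all coordinate functions vanishing at $c$, in particular the $U$- and $V$-coordinates and $u_{ii}-1$ for the diagonal $U$-entries. Concretely: for $p\in J^\ll$ and a coordinate $\l$ with $\l(c)=0$, one has $\l p\in\ML_c J^\ll$; and $(u_{ii}-1)p\in\ML_c J^\ll$. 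Iterating, one reduces $f$ modulo $\ML_c J^\ll$ to its "normal part" $\lp_c f\in\PL_W$, and then Theorem \ref{r} together with the fact that the \emph{higher} $K_W$-components $\lm>\ll^*$ of any element of $J^\ll_W=\lp_c(J^\ll)$ are themselves hit by $\ML_{W,0}\.J^\ll_W$, hence (pulling back) lie in $\ML_c J^\ll$, leaves only the single component $(\lp_c f)^{\ll^*}\in\PL_W^{\ll^*}$. This would give $\y J_c^\ll\to\PL_W^{\ll^*}$ injective, completing the proof.

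The main obstacle is precisely the last reduction step: controlling the \emph{higher} $K_W$-types of $\lp_c f$ modulo $\ML_c J^\ll$. One must show that if $q\in\PL_W^\lm$ with $\lm>\ll^*$ arises as $\lp_c f$ for some $f\in J^\ll$, then in fact $q$ can be realized as $\lp_c g$ for some $g\in\ML_c J^\ll$, so that $f-g$ has smaller normal part. I expect this to follow by an induction on $|\lm|$ using Lemma \ref{l} (writing $\PL_W^\lm$ as a span of $(\l' q')^\lm$ with $\l'\in W^*$, $q'\in\PL_W^{\lm-\Le_i}$, $\lm-\Le_i\ge\ll^*$) together with a lifting of $\l'$ to a linear form on $E$ vanishing at $c$ — but matching normal projections on the nose, rather than just up to higher-order terms, is the delicate bookkeeping point, and is where the Peirce calculus at $c$ and the multiplication rule \er{12} (both for $E$ and for $W$) have to be combined carefully.
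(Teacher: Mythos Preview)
Your surjectivity argument is fine and essentially matches the paper's (the paper packages it as the cross-section $\Lt_\ll$ of Proposition \ref{U}, with $\lp_c^{\ll^*}\Lt_\ll=\x{id}$, but the content is the same computation $\lp_c N^\ll=N_W^{\ll^*}$ plus $K_W$-irreducibility).

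The kernel argument, however, has a genuine gap, and it is precisely the one you flag yourself at the end. The step ``every monomial that carries a positive power of a $U\op V$-coordinate can be traded, modulo $\ML_c J^\ll$, for a lower-order term'' does not go through: an individual monomial of $f\in J^\ll$ is not in $J^\ll$, so writing such a monomial as $\l\.m'$ with $\l\in\ML_c$ does \emph{not} place it in $\ML_c J^\ll$. Equivalently, the ``normal part'' $\lp_c f$, viewed as a polynomial on $E$ constant along $U\op V$, is not an element of $J^\ll$, so $f-\lp_c f$ is not a well-defined element of $J^\ll/\ML_c J^\ll$. Your proposed repair via Lemma \ref{l} and lifting of linear forms only controls the $K_W$-type of $\lp_c f$; even if you succeed in matching $\lp_c f$ exactly by some $g\in\ML_c J^\ll$, you are left with $h=f-g\in J^\ll$ satisfying $\lp_c h=0$, and you have given no argument that $\lp_c h=0$ forces $h\in\ML_c J^\ll$. (Already for $E=\Cl^{2\xx 2}$, $c=e_{11}$, $\ll=(1,0)$, the coordinate $z_{12}$ has $\lp_c z_{12}=0$; it \emph{is} in $\ML_c\ML_0$, but only via the nontrivial identity $z_{12}=z_{12}\.z_{11}-(z_{11}-1)\.z_{12}$, which your Taylor scheme does not produce.)

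The paper's route is quite different and avoids comparing $f$ with $\lp_c f$ altogether. Since $J^\ll$ is generated by $\PL_E^\ll$, the quotient $\y J_c^\ll$ is spanned by the image of $\PL_E^\ll$, so the whole problem is the inclusion $\PL_E^\ll\ic\Lt_\ll\PL_W^{\ll^*}+\ML_c J^\ll$ (Lemma \ref{Q}). To prove this the paper produces a specific spanning set of $\PL_E^\ll$ by applying ordered strings of creation operators $Y\in U\b V^*$, $X\in U\b_0 U^*$, $Z\in W\b_0 W^*$ to the highest weight vector $N^\ll$ (Proposition \ref{D}). The $Z$-operators are absorbed by the cross-section $\Lt_\ll$ and its $\h K_W$-equivariance; the $X$-operators are handled by the observation that $c$ is a \emph{regular} point for $J_U^{\ll'}$, so on the $U$-factor one is in the rank-one situation of Proposition \ref{b} (Proposition \ref{H}). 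The hard core of the proof is Proposition \ref{P}, showing $Y^\dl N^\ll\in\ML_c J^\ll$ for $Y\in U\b V^*$: this requires a Jordan-algebraic Cramer's rule to exhibit, for each such $Y$, an explicit identity expressing $N_\l^\ld N^{2\ll'}(Y^\dl N^{\h\ll^*})$ as a combination of terms in $\ML_c J^\ll$. This Cramer-type identity is the missing idea in your outline; nothing in the Taylor-expansion picture substitutes for it.
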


In view of Proposition \ref{h} Theorem \ref{w} yields a realization of the full localization bundle $\y J^\ll$ on $E$ as a stratified sum of homogeneous vector bundles supported on the Kepler manifolds $\c E_\l,$ for $0\le \l\le r.$ A description in terms of reproducing kernels will be given in \cite{U3}. The fibre dimension of $\y J^\ll$ on the $\l$-th stratum $\c E_\l$ can be computed using the known dimension formula \cite{U2} for $\dim\PL_E^\ll.$

It is instructive to check the isomorphism \er{24} in the extremal cases $\l=0$ (maximal fibre) and $\l\ge\x{length}(\ll)$ 
(minimal fibre).

\begin{proposition}\label{x} For $\l=0$ we have $c=0,\ W=E^c=E$ and $\ll^*=\ll.$ In this case there is an isomorphism
$$\y J_0^\ll\xrightarrow[\ap]{\lp^\ll}\PL_E^\ll,\quad f+\ML_0 J^\ll\qi f^\ll.$$
\end{proposition}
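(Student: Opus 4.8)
The plan is to reduce the statement to the Peter--Weyl decomposition of $J^\ll$ in Theorem \ref{i} together with the technical Lemma \ref{l}, exploiting that for $\l=0$ all the relevant geometry degenerates: the tripotent $c=0$ has rank $0$ and trivial Peirce decomposition, so $W=E_0^0=E$; the truncated partition $\ll^*$ equals $\ll$; the normal projection $\lp_0$ is the identity map; and $\ML_c=\ML_0$ is the maximal ideal at the origin. Thus the map \er{23} is nothing but the restriction to $J^\ll$ of the Peter--Weyl projection $f\qi f^\ll,$ which by Theorem \ref{i} is the coordinate projection of $J^\ll=\S_{\lm\ge\ll}^\op\PL_E^\lm$ onto its $\ll$-summand. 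Surjectivity is then immediate, since $\PL_E^\ll\ic J^\ll$ and $p^\ll=p$ for $p\in\PL_E^\ll.$

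The real content is the computation of the kernel, i.e. the identity
$$\ML_0 J^\ll=\S_{\lm>\ll}^\op\PL_E^\lm.$$
For the inclusion $\ie$ I would note that $\ML_0$ is spanned by the linear forms in $E^*,$ so $\ML_0 J^\ll$ is spanned by products $\l\.g$ with $\l\in E^*$ and $g\in\PL_E^\lm$ for some $\lm\ge\ll;$ then \er{12} gives $\l\.\PL_E^\lm\ic\S_i\PL_E^{\lm+\Le_i},$ where the partitions $\lm+\Le_i$ satisfy $\lm+\Le_i>\lm\ge\ll,$ hence $\lm+\Le_i>\ll.$ For the reverse inclusion $\ei$ one must show $\PL_E^\lm\ic\ML_0 J^\ll$ for every partition $\lm>\ll,$ and here I would reuse, in sharpened form, the $K$-averaging argument from the proof of Theorem \ref{i}: by Lemma \ref{m} write $\lm=\ln+\Le_j$ with $\ln\ge\ll$ a partition; by Lemma \ref{l} the space $\PL_E^\lm$ is spanned by terms $(\l q)^\lm$ with $\l\in E^*$ and $q\in\PL_E^\ln;$ and the character integral formula \er{10} for $\lc_\lm$ writes $(\l q)^\lm=d_\lm\I_K dk\ \lc_\lm(k)\,(\l\oc k)(q\oc k).$ Since $\ln\ge\ll$ gives $\PL_E^\ln\ic J^\ll$ by Theorem \ref{i}, every integrand $(\l\oc k)(q\oc k)$ lies in $E^*\.J^\ll\ic\ML_0 J^\ll;$ and since $\ML_0 J^\ll$ is a $K$-invariant subspace (both $\ML_0$ and $J^\ll$ being $K$-invariant), the integral (carried out inside a finite-dimensional subspace of $\PL_E$) again lies in $\ML_0 J^\ll.$

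Combining the two inclusions, the kernel of the restricted projection $f\qi f^\ll$ equals $\S_{\lm>\ll}^\op\PL_E^\lm=\ML_0 J^\ll,$ so $f\qi f^\ll$ descends to the asserted linear isomorphism $\y J_0^\ll=J^\ll/\ML_0 J^\ll\to\PL_E^\ll,$ in agreement with Proposition \ref{u}. The only genuinely nontrivial ingredient is the inclusion $\ei$, which rests on Lemmas \ref{l} and \ref{m}; everything else is bookkeeping with the Peter--Weyl grading. I therefore expect the main obstacle to be merely the observation that the $K$-averaging step of the Theorem \ref{i} argument stays inside the smaller submodule $\ML_0 J^\ll$, and not just inside $J^\ll.$
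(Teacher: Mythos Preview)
Your argument is correct and essentially identical to the paper's: both establish $\ML_0 J^\ll\ic\ker\lp^\ll$ via \er{12} and the reverse inclusion via Lemmas \ref{m}, \ref{l} together with the character integration formula \er{10}, observing that the $K$-average lands in $\ML_0 J^\ll$ rather than merely in $J^\ll$. One small wording slip: $\ML_0$ is generated as an ideal, not spanned as a vector space, by $E^*$; but the conclusion you actually use, namely $\ML_0 J^\ll=E^*J^\ll$, is correct because $J^\ll$ is an ideal.
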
 
\begin{proof} If $\l\in E^*$ and $q\in J^\ll,$ then $q$ has only components of type $\lm\ge\ll$ and $\l q$ has components of type $\lm+\Le_i$ with $\lm\ge\ll.$ It follows that $f\in\ML_0 J^\ll$ has vanishing component $f^\ll=0.$ Thus the map \er{20} is well-defined and surjective since $\PL_E^\ll\ic J^\ll.$ To show injectivity we use an argument analogous to the proof of Theorem \ref{i}. If $f\in J^\ll$ satisfies $f^\ll=0$ then $f$ has only components $f^\lm,$ where $\lm>\ll.$ For each such 
$\lm$ there exists a partition $\ln\ge\ll$ such that $\lm=\ln+\Le_i$ for some $i\le r.$ By Lemma \ref{l}, $\PL_E^\lm$ is spanned by terms $(\l q)^\lm,$ where $\l\in E^*$ and $q\in\PL_E^\ln.$ Since $\l\oc k\in\ML_0$ and $q\oc k\in\PL_E^\ln$ for all 
$k\in K,$ the character integration formula \er{10} yields $f^\lm\in\ML_0\PL_E^\ln\ic\ML_0 J^\ll$ and therefore $f\in\ML_0 J^\ll.$
\end{proof}

In view of Proposition \ref{x} \er{24} is equivalent to
$$\y J_c^\ll\xrightarrow[\ap]{\lp_c}\y{J_W^{\ll^*}}_0.$$
The right hand side corresponds to the maximal fibre of the eigenbundle relative to the normal space $W=E^c.$ This formulation may be valid in more general situations, for stratified varieties which are compatible under passing to the normal space.

At the other extreme, for the minimal (1-dimensional) fibres, suppose that $c$ has maximal rank $r$ or, more generally,
$$\l=\x{rank}(c)\ge\x{length}(\ll).$$ 
Then $\ll^*=(\ll_{\l+1},\;,\ll_r)=0^{(r-\l)}$ and hence $\PL_W^{\ll^*}=\Cl$ (constant functions). This holds even in case 
$\l=r$ where $W=\{0\}.$ Thus \er{24} amounts to an isomorphism
$$\y J_c^\ll\xrightarrow[\ap]{\lp_c^{0^{(r-\l)}}}\PL_W^0\ap\Cl.$$
This follows from Corollary \ref{d} since $c$ is a regular point for $J^\ll.$

A third case that is easily checked is the "Duan-Guo situation" where we have a prime ideal evaluated at a smooth point. In our case this corresponds to fundamental partitions $J^{1^{(\l+1)}}=\ML_{\h E_\l}$ and a tripotent $c\in\c E_\l$ of rank $\l.$ In this case $\ll^*=(1,0,\;,0)$ and hence $\PL_W^{\ll^*}=W^*$ (linear dual space). Thus \er{24} amounts to 
$$\y J_c^{1^{(\l+1)}}\xrightarrow[\ap]{\lp_c^{1,0,\;,0}}\PL_W^{1,0,\;,0}=W^*.$$
Since $W=E^c$ is the normal space to the variety $\h E_\l$ at the smooth point $c,$ this is exactly the result proved 
(in a general context) in \cite{DG}. Note that Theorem \ref{w} includes the singular points of $\h E_\l$ having rank $<\l.$ The corresponding fibres involve non-linear functions on $W.$

The {\bf proof of Theorem \ref{w}} will occupy the rest of this section. Fix a frame $e_1,\;,e_r$ of minimal orthogonal tripotents $e_i.$ Let $1\le\l\le r$ and put 
$$c=e_{[\l]}=e_1+\;+e_\l,$$
noting that the case $c=0$ is covered by Proposition \ref{x}. As a replacement of the usual matrix coordinates, any irreducible $J^*$-triple $E$ has a {\bf joint Peirce decomposition}
$$E=\S_{0\le i\le j\le r}E_{ij}$$
into subspaces $E_{ij}=E_{ji}$ satisfying the "Peirce multiplication rules" \cite{L}
$$\{E_{ij}E_{jm}^*E_{mn}\}\ic E_{in}.$$
Moreover, such a triple product vanishes if there is no possible "matching" of the indices. 

For any $1\le m\le r$ we put $[m]:=\{1,\;,m\}.$ The Peirce decomposition \er{25} under $e_{[m]}$ has the form 
$$E_{[m]}=E_{e_{[m]}}=\S_{i,j\in[m]}E_{ij},\q E_{e_{[m]}}^1=\S_{i\in[m],j\notin[m]}E_{ij},\q E^{e_{[m]}}=\S_{i,j\notin[m]}E_{ij}.$$
This notation applies also for domains not of tube type, where the index 0 occurs. By \er{34} the $m$-th minor $N_m$ has the derivative
$$N_m'(z)\lx=\lD_{[m]}'(P_{[m]}z)P_{[m]}\lx$$
for $\lx\in E,$ where $\lD_{[m]}$ is the Jordan determinant of $E_{[m]}$ with unit element $e_{[m]}.$ For 
$A\in\h\kL$ this implies
\be{33}(A^\dl N_m)(z)=N_m'(z)Az=\lD_{[m]}'(P_{[m]}z)P_{[m]}Az\ee

{\bf Step 1} (of the proof) constructs a "good" spanning set of vectors in $\PL_E^\ll.$ For related arguments, cf. \cite{U1}.

\begin{lemma}\label{z} Let $0\le p,q,j\le r$ and $p,q,j$ distinct. Then the following identities hold
for $z\in E_{pq}, x\in E_{pj}$ and $y\in E_{jq}$:
$$z\b e_q^*=e_p\b\{e_pz^*e_q\}^*,$$
$$e_p\b\{yx^*e_p\}^*=x\b y^*=\{e_qy^*x\}\b e_q^*.$$
Here an index $i\ne 0$ if the identity involves $e_i.$
\end{lemma}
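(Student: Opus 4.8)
The plan is to prove Lemma~\ref{z} entirely from the Jordan triple identity \er{6} together with the Peirce multiplication rules, treating the three asserted identities as equalities of operators in $\h\kL = E\b E^*$ that we then verify by evaluating on a spectral basis. First I would record the elementary Peirce facts that will be used repeatedly: for distinct indices $p,q,j$, the only nonzero triple products among $E_{pq}, E_{pj}, E_{jq}$ and the frame tripotents $e_i$ are those permitted by the matching rule, and moreover $\{e_p e_p^* w\} = w$ for $w\in E_{pq}$ (since $E_{pq}\ic E_{e_p}^1$ has Peirce-one eigenvalue under $e_p\b e_p^*$, giving eigenvalue $1$), while $\{e_q e_q^* w\}=w$ as well, and $\{e_j e_j^* w\}=0$. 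These normalizations are what make the stated identities come out with no spurious scalar factors.

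For the first identity $z\b e_q^* = e_p\b\{e_p z^* e_q\}^*$ with $z\in E_{pq}$: I would apply the Jordan triple identity \er{6} in the form that expresses $(u\b v^*)$-type operators, or more directly evaluate both sides on an arbitrary $x\in E$ and use the decomposition of $x$ into Peirce components $x_{mn}$. The left side sends $x\mapsto \{z e_q^* x\}$, which by the matching rule is nonzero only on the $E_{qn}$-components and lands in $E_{pn}$. The right side sends $x\mapsto \{e_p\,\{e_p z^* e_q\}^*\,x\}$. Here $\{e_p z^* e_q\}\in E_{pq}$ (matching $p\!-\!p$, wait: $e_p\in E_{pp}$, $z^*\in E_{pq}$, $e_q\in E_{qq}$, so the product lies in $E_{pq}$ — one checks $p,q$ match correctly), and then the outer triple product with $e_p$ on the left reproduces the $x\mapsto\{z e_q^* x\}$ action after using the Peirce-one normalization. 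The cleanest route is to apply \er{6} with $(u,v,z,w)=(e_p, \{e_q z^* e_p\}, \text{suitable}, \dots)$ — but I expect it is faster to just verify the operator identity on each Peirce summand, since both sides visibly annihilate everything except the relevant summands. For the second line, the two claimed expressions for $x\b y^*$ (with $x\in E_{pj}$, $y\in E_{jq}$) are handled the same way: $\{y x^* e_p\}\in E_{jq}*E_{pj}*E_{pp}\to E_{jp}$... one must track indices carefully, so that $\{e_p\{yx^*e_p\}^* w\}$ for $w\in E_{pm}$ reduces to $\{x y^* w\}$ via $\{e_p e_p^* \cdot\}=\mathrm{id}$ on $E_{p\bullet}$, and symmetrically for the $e_q$ version.

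The key technical step, and the one I expect to be the main obstacle, is bookkeeping the index matching in the nested triple products so that each intermediate term lands in the expected Peirce space $E_{ij}$ and picks up exactly the normalization $\{e_i e_i^* \cdot\}=\mathrm{id}$ rather than a factor of $2$ (which would occur if a product accidentally landed in a Peirce-two space $E_{ii}$) or $0$. The remark in the statement that ``an index $i\ne 0$ if the identity involves $e_i$'' is precisely the caveat needed: when $p$, $q$, or $j$ equals $0$ the tripotent $e_0$ does not exist, the corresponding Peirce-two normalization fails, and the affected identity is simply not asserted. So I would organize the proof as: (i) fix distinct $p,q,j\in\{0,1,\dots,r\}$, discard any identity whose displayed frame index would be $0$; (ii) observe both sides of each claimed equation are elements of $E\b E^*$, hence determined by their action on the joint Peirce summands $E_{mn}$; (iii) on each $E_{mn}$, expand using the Peirce multiplication rules, noting that all but one or two summands give $0$ by non-matching; (iv) on the surviving summands, collapse the nested products using $\{e_i e_i^* w\}=w$ for $w\in E_{i\bullet}$, $i\ne 0$. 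Step (iii)--(iv) is where one must be meticulous but there is no conceptual difficulty — it is a finite check governed entirely by the matching combinatorics. If a slicker derivation is wanted, the first identity follows by substituting $u=z$, $v=e_q$, and exploiting $\{e_q e_q^* e_q\}=2e_q$ together with \er{6}; I would mention this but default to the direct Peirce computation as the more transparent and uniform argument.
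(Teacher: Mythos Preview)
Your ingredients are right, but the proposed route---evaluate both sides on each Peirce summand and then ``collapse the nested products using $\{e_i e_i^* w\}=w$''---has a gap at exactly the step you flag as decisive. The Peirce-one normalization alone will not reduce $\{e_p\,\{e_p z^* e_q\}^*\,w\}$ to $\{z e_q^* w\}$; that equality is a genuine five-linear relation and can only come from the Jordan triple identity \er{6}. So once you reach the surviving summand and try to ``collapse,'' you are forced to invoke \er{6} anyway, at which point you are reproducing the paper's argument pointwise rather than at the operator level, with extra bookkeeping and no gain in transparency.

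The paper's proof is precisely the ``slicker derivation'' you mention and then set aside. It works entirely in the Lie algebra $E\b E^*$: for the first identity one writes $z\b e_q^* = \{e_p e_p^* z\}\b e_q^*$ (Peirce-one eigenvalue on $z\in E_{pq}$), subtracts the vanishing term $z\b\{e_q e_p^* e_p\}^*$ to recognize the commutator $[e_p\b e_p^*,\,z\b e_q^*]$, and then expands that same commutator the other way via \er{6} to get $-\{z e_q^* e_p\}\b e_p^* + e_p\b\{e_p z^* e_q\}^* = e_p\b\{e_p z^* e_q\}^*$ after one more Peirce vanishing. The two equalities in the second line are obtained by the identical trick applied to $[x\b e_p^*,\,e_p\b y^*]$ and $[e_q\b y^*,\,x\b e_q^*]$ respectively. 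This avoids any case analysis over the summands $E_{mn}$ and makes the role of \er{6} explicit; I recommend you adopt the operator-level commutator argument rather than the summand-by-summand verification.
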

\begin{proof} Since $p\ne q$ we have $z\in E_{e_p}^1$ and hence $z=\{e_pe_p^*z\}.$ Moreover, 
$\{e_qe_p^*e_p\}=0=\{ze_q^*e_p\}.$ Therefore the Jordan triple identity implies 
$$z\b e_q^*=\{e_pe_p^*z\}\b e_q^*=\{e_pe_p^*z\}\b e_q^*-z\b\{e_qe_p^*e_p\}^*=[e_p\b e_p^*,\ z\b e_q^*]$$
$$=-[z\b e_q^*,e_p\b e_p^*]=-\{ze_q^*e_p\}\b e_p^*+e_p\b\{e_pz^*e_q\}^*=e_p\b\{e_pz^*e_q\}^*.$$
This yields the first assertion.

Since $p\ne j$ we have $x\in E_{e_p}^1$ and hence $x=\{e_pe_p^*x\}.$ Here $j=0$ is allowed. 
Moreover, $\{e_py^*x\}=\{e_pe_p^*y\}=0$ since $p$ does not match $j$ or $q.$ Therefore the Jordan triple identity implies
$$x\b y^*-e_p\b\{yx^*e_p\}^*=\{xe_p^*e_p\}\b y^*-e_p\b\{yx^*e_p\}^*$$
$$=[x\b e_p^*,e_p\b y^*]=-[e_p\b y^*,x\b e_p^*]=-\{e_py^*x\}\b e_p^*+x\b\{e_pe_p^*y\}^*=0.$$
Similarly, 
$$\{e_qy^*x\}\b e_q^*-x\b y^*=\{e_qy^*x\}\b e_q^*-x\b\{e_qe_q^*y\}^*$$
$$=[e_q\b y^*,x\b e_q^*]=-[x\b e_q^*,e_q\b y^*]=-\{xe_q^*e_q\}\b y^*+e_q\b\{yx^*e_q\}^*=0$$
since $\{e_qe_q^*y\}=y$ and $\{xe_q^*e_q\}=0=\{yx^*e_q\}.$ This yields the second assertion. 
\end{proof}

We first consider the "annihilation operators" in $\h\kL.$

\begin{proposition}\label{A} Let $q\ne 0$ and $p\notin[q].$ Then 
\be{32}(E_{pj}\b E_{jq}^*)^\dl N_m=0\ee
for $0\le j\le r$ and $1\le m\le r.$
\end{proposition}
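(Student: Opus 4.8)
The plan is to compute the action of the differential operator $(E_{pj}\b E_{jq}^*)^\dl$ on $N_m$ directly using the derivative formula \er{33}. For $A = u\b v^*$ with $u\in E_{pj}$ and $v\in E_{jq}$, formula \er{33} gives $(A^\dl N_m)(z) = \lD_{[m]}'(P_{[m]}z)\,P_{[m]}\{uv^*z\}$, so everything reduces to understanding where the vector $P_{[m]}\{uv^*z\}$ lands and how $\lD_{[m]}'$ evaluated at a point of $E_{[m]}$ acts on it. I would first dispose of the easy case: if $p\notin[m]$, then by the Peirce multiplication rules $\{uv^*z\}$ has no component in any $E_{in}$ with $i,n\in[m]$ that survives $P_{[m]}$ — more precisely, the index $p$ of $u$ forces the result into Peirce spaces involving $p$, so $P_{[m]}\{uv^*z\}=0$ and the derivative vanishes identically. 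So the substantive case is $p\in[m]$, and since $p\notin[q]$ we automatically have $q\notin[m]$ is false — wait, rather $q>p$ is impossible, so actually $p\notin[q]$ means $p\geq q$... let me instead just say: the interesting case is $p\in[m]$, and then one must show the derivative still vanishes.

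In that case the key point is that $\lD_{[m]}$ is a function on the Jordan algebra $E_{[m]}$ with unit $e_{[m]}$, and its derivative at $w\in E_{[m]}$ satisfies the identity $\lD_{[m]}'(w)\lx = \lD_{[m]}(w)\,(\lx \mid w^{-1})$-type formulas, or more robustly: $\lD_{[m]}$ is semi-invariant under $K_{[m]}$, so $\lD_{[m]}'(w)(A\b B^*\,w)$ can be controlled for $A\b B^*\in E_{[m]}\b E_{[m]}^*$. The subtlety is that $P_{[m]}\{uv^*z\}$ need not lie in the "tangent directions" that $K_{[m]}$ produces. I would argue as follows: write $z = \sum_{i\le j} z_{ij}$ in joint Peirce coordinates. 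Because $u\in E_{pj}$, $v\in E_{jq}$, the triple product $\{uv^*z\}$ contributes to $E_{[m]}$ only through terms $\{u v^* z_{qn}\}\in E_{pn}$ (matching the $q$-index of $v$), and for this to survive $P_{[m]}$ we need $n\in[m]$. But then $\lD_{[m]}'(P_{[m]}z)$ applied to a vector in $E_{pn}$ with $p\in[m]$, $n\in[m]$: here I would invoke Lemma \ref{z} to re-express the operator $u\b v^*$, when restricted in its action landing in $E_{[m]}$, in terms of operators $e_p\b(\dots)^*$ or $(\dots)\b e_q^*$ with $q\notin[m]$ — and an operator of the form $(\text{anything})\b e_q^*$ with $q\notin[m]$ annihilates $\lD_{[m]}$ because $\lD_{[m]}$ only depends on the $E_{[m]}$-component and $e_q^*$ "extracts" a $q$-component which is zero on $E_{[m]}$.

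More carefully, the cleanest route: since $p\notin[q]$, i.e. $p>q$ or $p,q$ incomparable in the sense that $q<p$... actually in a frame all indices are linearly ordered, so $p\notin[q]=\{1,\dots,q\}$ simply means $p>q$. Then $q<p\le r$. Now either $q\notin[m]$ or $q\in[m]$. If $q\notin[m]$: use the second identity of Lemma \ref{z} (the version $x\b y^* = \{e_q y^* x\}\b e_q^*$ when $x\in E_{pj}, y\in E_{jq}$) to write $u\b v^* = \{e_q v^* u\}\b e_q^*$ up to the appropriate reading; then $(u\b v^*)^\dl N_m = (\{e_qv^*u\}\b e_q^*)^\dl N_m$, and since $e_q^*$ with $q\notin[m]$ kills the $E_{[m]}$-component (the operator $e_q\b e_q^*$ acts as $0$ on $E_{[m]}$, hence $P_{[m]}\{(\cdot)e_q^*z\}$ picks out only $E_{[m]}$-components of $z$ paired against $e_q$, which vanish), we get zero. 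If $q\in[m]$, then since $p>q$ and... hmm, here $p$ could still be in $[m]$ or not; if $p\notin[m]$ we already handled it, and if $p\in[m]$ then both $p,q\in[m]$ and $j$ is either in $[m]$ or not — use the first identity of Lemma \ref{z} to swap to $e_p\b(\cdots)^*$ and then the chain $x\b y^* = e_p\b\{yx^*e_p\}^*$ to push everything to involve $e_p, e_q\in E_{[m]}$, reducing \er{32} to a statement purely inside the Jordan algebra $E_{[m]}$ about its own annihilation operators — i.e. reduce to the tube-type / Jordan-algebra case where $N_m = \lD_{[m]}$ is the determinant and the "lowering" operators $E_{ab}\b E_{bc}^*$ with $c<a$ (strictly lower-triangular in the Peirce grading) annihilate the determinant, a standard fact provable from semi-invariance \er{18} by differentiating $\lD_e(\exp(tA)z) = \lD_e(\exp(tA)e)\lD_e(z)$ at $t=0$ and noting $\lD_e'(e)A e = 0$ for such strictly-lowering $A$.

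The main obstacle I anticipate is the bookkeeping in the case $p\in[m]$: disentangling exactly which Peirce components of $\{uv^*z\}$ survive $P_{[m]}$ and then verifying that Lemma \ref{z} really lets one rewrite the operator so that a factor $e_i\b e_i^*$ with $i\notin[m]$ appears — the identities in Lemma \ref{z} are stated for $z,x,y$ in specific Peirce spaces, so one must check the matching of indices in $\{uv^*z_{qn}\}$ against the hypotheses of Lemma \ref{z} (one needs three distinct indices). I expect that after fixing the index matching, each surviving term is of the form handled by one of the two identities in Lemma \ref{z}, and the conclusion follows by recognizing that the resulting operator either contains $e_q^*$ with $q\notin[m]$ (if $q\notin[m]$) or lives entirely inside $E_{[m]}$ as a strictly-lowering operator on the Jordan algebra $E_{[m]}$ (if $q\in[m]$, forcing $p>q$ with both in $[m]$, a genuine lowering direction). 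The Jordan-algebra sub-case is then exactly the classical fact that lowering operators kill the determinant, which I would prove inline from \er{18} in two lines.
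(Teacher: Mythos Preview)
Your approach is correct and essentially matches the paper's: both reduce via Lemma \ref{z} to an operator of the form $z\b e_q^*$ (or equivalently $e_p\b w^*$) with $z,w\in E_{pq}$, dispose of $p\notin[m]$ by the Peirce rules, and finish the case $p\in[m]$ through the semi-invariance of $\lD_{[m]}$. One simplification you overlooked: once $p\in[m]$ you have $p\ne 0$, so $p\notin[q]$ forces $q<p\le m$ and hence $q\in[m]$ automatically --- your subcase $q\notin[m]$ is vacuous --- and then $A\in E_{[m]}\b E_{[m]}^*$ with $\x{tr}_{E_{[m]}}A=\x{const}\.(z|e_q)=0$, so $A$ is a commutator sum and annihilates $\lD_{[m]}$ by semi-invariance, which is exactly the paper's two-line conclusion and is equivalent to your ``lowering operators kill the determinant'' computation.
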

\begin{proof} Let $x\in E_{pj}$ and $y\in E_{jq}.$ Lemma \ref{z} implies $A:=x\b y^*=z\b e_q^*,$ where 
$z:=\{xy^*e_q\}\in E_{pq}.$ If $p\notin[m]$ then $AE=\{ze_q^*E\}\ic E_{[m]}^\perp$ (since $p$ does not match $q$). Therefore 
\er{32} follows from \er{33}. If $p\in[m]$ then $p\ne 0$ and $p\notin[q]$ means $q<p.$ Therefore $q\in[m]$ and 
$z\in E_{pq}\ic E_{[m]}.$ It follows that $A\in E_{[m]}\b E_{[m]}^*.$ Since
$$\x{tr}_{E_{[m]}}\ A=\x{const}\.(z|e_q)=0$$ 
it follows that $A$ is a commutator sum in $E_{[m]}\b E_{[m]}^*.$ Now \er{32} follows from semi-invariance of 
$\lD_{[m]}.$
\end{proof}

Now consider the "creation operators" in $\h\kL.$

\begin{lemma}\label{B} Let $p\ne0$ and $q\notin[p]$. Then $E_{pj}\b E_{jq}^*$ is spanned by linear transformations in 
$U\b_0 U^*,\ U\b V^*$ and $W\b_0 W^*$ for the Peirce decomposition \er{25} under $c$ (the subscript 0 means vanishing trace).
\end{lemma}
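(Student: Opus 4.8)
The statement to be established is Lemma~\ref{B}: for $p\ne 0$ and $q\notin[p]$ (so $p<q$, or $q=0<p$), the space $E_{pj}\b E_{jq}^*$ lies in the span of transformations of type $U\b_0 U^*$, $U\b V^*$, and $W\b_0 W^*$ for the Peirce decomposition \er{25} under $c=e_{[\l]}$. The plan is to parametrize a general element of $E_{pj}\b E_{jq}^*$ using Lemma~\ref{z}, rewrite it in terms of the single "matrix unit" block $E_{pq}$, and then check case-by-case — according to where the indices $p,q$ sit relative to $[\l]$ — into which Peirce slot of $\h\kL$ under $c$ the transformation falls.

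\textbf{Step 1: reduce to rank-one blocks.} Fix $x\in E_{pj}$, $y\in E_{jq}$ and set $A:=x\b y^*$. Since $p\ne 0$ we may invoke the second identity of Lemma~\ref{z} (with the roles of the indices arranged so that $p$ is the "matching" index): $A=x\b y^*=\{e_q y^* x\}\b e_q^*$ when $q\ne 0$, putting $z:=\{e_q y^* x\}\in E_{pq}$, so $A=z\b e_q^*$; when $q=0$ one instead uses $A=e_p\b\{yx^*e_p\}^*$ with the left slot in $E_{p0}\subseteq E_c^1$ or $E_c^0$ according to whether $p\in[\l]$. In either case it suffices to treat $A=z\b e_q^*$ with $z\in E_{pq}$ arbitrary (and, in the exceptional $q=0$ case, $A=e_p\b w^*$ with $w\in E_{p0}$), because the span of such rank-one-block transformations contains $E_{pj}\b E_{jq}^*$.

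\textbf{Step 2: place $z\b e_q^*$ in a Peirce slot under $c$.} Recall that under $c=e_{[\l]}$ we have $U=E_c^2=\S_{i,j\in[\l]}E_{ij}$, $V=E_c^1=\S_{i\in[\l],\,j\notin[\l]}E_{ij}$, $W=E_c^0=\S_{i,j\notin[\l]}E_{ij}$, and that $\h\kL=E\b E^*$ decomposes compatibly: $X\b Y^*$ sends $E_c^\mu$ into $E_c^{\mu+\nu-?}$ by the Peirce multiplication rules. Since $q\notin[p]$ forces $p<q$ (the case $q=0$ being handled separately), either (a) $q\le\l$, whence $p<q\le\l$ so $z\in E_{pq}\subseteq U$ and $e_q\in U$, giving $A=z\b e_q^*\in U\b U^*$ with $\x{tr}\,A=\x{const}\cdot(z|e_q)=0$ because $z\in E_{pq}$ with $p\ne q$, i.e. $A\in U\b_0 U^*$; or (b) $q>\l$ and $p\le\l$, whence $z\in E_{pq}\subseteq V$ and $e_q\in W$, so $A=z\b e_q^*$ is of type $V\b W^*$ — but this is the \emph{adjoint} of a $W\b V^*$-type map, and we want $U\b V^*$; here one reuses Lemma~\ref{z} again the other way, $z\b e_q^*=e_p\b\{e_p z^* e_q\}^*$ (first identity), with $e_p\in U$ and $\{e_p z^* e_q\}\in E_{pq}\cap\{e_p\cdot^*e_q\}$-image; a short check with the multiplication rules shows this lands in $U\b V^*$; or (c) $q>\l$ and $p>\l$, whence $z\in E_{pq}\subseteq W$, $e_q\in W$, so $A\in W\b W^*$ with $\x{tr}\,A=0$ as in case (a), i.e. $A\in W\b_0 W^*$. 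The $q=0$ case is subsumed in (a)/(c) depending on whether $p\le\l$ via the identity $A=e_p\b\{yx^*e_p\}^*$.

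\textbf{Main obstacle.} The delicate point is case~(b): a priori $z\b e_q^*$ with $z\in E_{pq}\subseteq V$, $e_q\in W$ looks like an element of $V\b W^*$, which is \emph{not} on the allowed list — the allowed list $\{U\b_0U^*,\ U\b V^*,\ W\b_0 W^*\}$ is deliberately the "raising or trace-zero" part of $\h\kL$ under $c$, and the content of the lemma is exactly that the creation operators $E_{pj}\b E_{jq}^*$ with $q\notin[p]$ never produce the "lowering" component $V\b W^*$ (nor $W\b V^*$, nor the trace part of $V\b V^*$, etc.). Resolving this requires the first identity of Lemma~\ref{z} to rewrite $z\b e_q^*=e_p\b\{e_pz^*e_q\}^*$, trading the $E_{pq}$-block for an $E_{pp}=\Cl e_p$-block on the left, which moves the transformation from $V\b W^*$ into $U\b(\text{something})^*$, and then a careful index chase through $\{E_{ij}E_{jm}^*E_{mn}\}\subseteq E_{in}$ to confirm the right slot is $V$ rather than $U$ or $W$. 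I expect verifying precisely that $\{e_p z^* e_q\}\in V$ (equivalently, that it has no $U$- or $W$-component), using $p\le\l<q$, to be the one computation that needs genuine care; everything else is a routine application of Lemma~\ref{z} and the Peirce calculus.
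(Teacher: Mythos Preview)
Your proof is correct and follows the same strategy as the paper --- reduce via Lemma~\ref{z} to a rank-one block and then do a case analysis on where $p,q$ sit relative to $[\l]$. However, you have made life harder for yourself by choosing the representation $A=z\b e_q^*$ with $z\in E_{pq}$, which is exactly what creates the ``main obstacle'' in your case~(b). The paper instead writes $A=e_p\b z^*$ with $z=\{yx^*e_p\}\in E_{pq}$ from the start (this is licensed by $p\ne 0$ alone); then the three admissible cases $p,q\in[\l]$; $p,q\notin[\l]$; $p\in[\l],q\notin[\l]$ give $U\b_0 U^*$, $W\b_0 W^*$, $U\b V^*$ respectively with no second application of Lemma~\ref{z}, and the fourth case $p\notin[\l],\ q\in[\l]$ is simply excluded because it would force $1\le q\le\l<p$ and hence $q\in[p]$. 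Your detour in~(b) --- applying the first identity of Lemma~\ref{z} to swap $z\b e_q^*$ back to $e_p\b\{e_pz^*e_q\}^*$ --- does work, since $\{e_pz^*e_q\}\in E_{pq}\subset V$, but it just undoes your initial choice.

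Two minor slips: your phrase ``since $p\ne 0$ we may invoke\ldots $A=\{e_qy^*x\}\b e_q^*$'' is misattributed --- that identity needs $q\ne 0$, not $p\ne 0$; and the $q=0$ case, handled via $A=e_p\b\{yx^*e_p\}^*$, actually falls under the patterns of your cases~(b)/(c), not~(a)/(c), since $0\notin[\l]$ forces $E_{p0}\subset V$ when $p\in[\l]$ and $E_{p0}\subset W$ when $p\notin[\l]$.
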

\begin{proof} Let $A=x\b y^*$ with $x\in E_{pj}$ and $y\in E_{jq}.$ Since $p\ne 0$ Lemma \ref{z} implies $A:=x\b y^*=e_p\b z^*,$
where $z=\{yx^*e_p\}\in E_{pq}.$ There are three cases: If both $p,q\in[\l]$ then $e_p\in U$ and $z\in U.$ Therefore 
$A\in U\b U^*.$ If both $p,q\notin[\l]$ then $e_p\in W$ and $z\in W.$ Therefore $A\in W\b W^*.$ In both cases the relation 
$(e_p|z)=0$ implies vanishing trace. If $p\in[\l]$ and $q\notin[\l]$ then $e_p\in U$ and $z\in V.$ Therefore $A\in U\b V^*.$ Finally, if $p\notin[\l]$ and $q\in[\l]$ then $1\le q\le\l<p$ since $p\ne 0.$ Thus $q\in[p],$ showing that this case cannot occur. 
\end{proof}

\begin{lemma}\label{C} The following commutation relations hold:
$$[U\b U^*,U\b V^*]\ic U\b V^*,\q[W\b W^*,U\b V^*]\ic U\b V^*,\q[U\b U^*,W\b W^*]=0.$$
\end{lemma}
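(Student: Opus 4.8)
All three relations are statements about brackets of triple multiplication operators, so the natural tool is the Jordan triple identity \er{6}, which expresses $[u\b v^*,z\b w^*]$ as $\{uv^*z\}\b w^*-z\b\{wu^*v\}^*$. The strategy is to take generic generators of each of the subspaces $U\b U^*=E_c^2\b (E_c^2)^*$, $V^*$-type operators $U\b V^*=E_c^2\b(E_c^1)^*$, and $W\b W^*=E_c^0\b(E_c^0)^*$, apply \er{6}, and then read off which Peirce space (with respect to $c$) the resulting terms land in, using the fact that the Peirce spaces $E_c^\alpha$ are Jordan subtriples and that $\{E_c^\alpha (E_c^\beta)^* E_c^\gamma\}\subseteq E_c^{\alpha-\beta+\gamma}$ (interpreting any index outside $\{0,1,2\}$ as giving zero). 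This last multiplication rule for the Peirce decomposition under a single tripotent is standard Jordan theory and is implicit in the Peirce multiplication rules quoted just before Lemma~\ref{z}.

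\textbf{First relation.} For $a\b b^*\in U\b U^*$ and $u\b v^*\in U\b V^*$ (so $a,b,u\in E_c^2$ and $v\in E_c^1$), formula \er{6} gives
$$[a\b b^*,u\b v^*]=\{ab^*u\}\b v^*-u\b\{vb^*a\}^*.$$
The first term has $\{ab^*u\}\in\{E_c^2(E_c^2)^*E_c^2\}\subseteq E_c^2=U$, so it lies in $U\b V^*$. For the second term, $\{vb^*a\}\in\{E_c^1(E_c^2)^*E_c^2\}\subseteq E_c^{1-2+2}=E_c^1=V$, so $u\b\{vb^*a\}^*\in U\b V^*$ as well. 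Hence $[U\b U^*,U\b V^*]\subseteq U\b V^*$.

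\textbf{Second relation.} For $s\b t^*\in W\b W^*$ ($s,t\in E_c^0$) and $u\b v^*\in U\b V^*$, \er{6} gives
$$[s\b t^*,u\b v^*]=\{st^*u\}\b v^*-u\b\{vs^*t\}^*.$$
Now $\{st^*u\}\in\{E_c^0(E_c^0)^*E_c^2\}\subseteq E_c^{0-0+2}=E_c^2=U$, and $\{vs^*t\}\in\{E_c^1(E_c^0)^*E_c^0\}\subseteq E_c^{1-0+0}=E_c^1=V$, so both terms lie in $U\b V^*$, giving $[W\b W^*,U\b V^*]\subseteq U\b V^*$.

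\textbf{Third relation.} For $a\b b^*\in U\b U^*$ and $s\b t^*\in W\b W^*$, \er{6} gives
$$[a\b b^*,s\b t^*]=\{ab^*s\}\b t^*-s\b\{ta^*b\}^*.$$
Here $\{ab^*s\}\in\{E_c^2(E_c^2)^*E_c^0\}\subseteq E_c^{2-2+0}=E_c^0$, but also $s\in E_c^0$ forces this triple product to vanish: more directly, $\{ab^*s\}\in\{E_c^2(E_c^2)^*E_c^0\}=\{0\}$ because the Peirce rules prohibit a $2$-$2$-$0$ match (equivalently the exponent would need to be reached by a legal sequence of Peirce multiplications, and $E_c^2\cdot E_c^0$ already vanishes). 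Similarly $\{ta^*b\}\in\{E_c^0(E_c^2)^*E_c^2\}=\{0\}$. Hence the bracket is zero, proving $[U\b U^*,W\b W^*]=0$.

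\textbf{Main obstacle.} The only subtlety is making precise, and consistent with the paper's conventions, the vanishing claims $\{E_c^2(E_c^2)^*E_c^0\}=\{0\}$ and $\{E_c^2(E_c^0)^*E_c^2\}$-type statements, since the paper has so far stated Peirce multiplication rules only in the joint-frame form $\{E_{ij}E_{jm}^*E_{mn}\}\subseteq E_{in}$ rather than in the single-tripotent form. I would handle this by noting that the single-tripotent Peirce decomposition under $c=e_{[\l]}$ is the coarsening $E_c^2=\sum_{i,j\in[\l]}E_{ij}$, $E_c^1=\sum_{i\in[\l],j\notin[\l]}E_{ij}$, $E_c^0=\sum_{i,j\notin[\l]}E_{ij}$ already recorded in the paper, so the needed rules follow immediately from the joint Peirce rules: a $2$-$2$-$0$ triple product requires indices of the form $E_{ij}E_{jm}^*E_{mn}$ with $i,j\in[\l]$ and $m,n\notin[\l]$, which is impossible since $j$ cannot lie in both $[\l]$ and its complement. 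This is a short bookkeeping remark, not a genuine difficulty; once it is in place all three relations are one-line applications of \er{6}.
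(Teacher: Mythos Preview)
Your proof is correct and follows exactly the same route as the paper: apply the Jordan triple identity \er{6} to generators of each pair of subspaces and read off the Peirce type of the two resulting triple products. Two very minor points: in the first relation your second term should be $u\b\{va^*b\}^*$ rather than $u\b\{vb^*a\}^*$ (a harmless slip, since both lie in $U\b V^*$), and in the second relation the paper records the sharper fact $\{st^*u\}=0$ (via $\{E_c^0(E_c^0)^*E_c^2\}=\{E_c^2(E_c^0)^*E_c^0\}=0$), though your weaker containment $\{st^*u\}\in U$ is already enough for the stated inclusion.
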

\begin{proof} Let $u,u_1,u_2\in U,\ v\in V$ and $w_1,w_2\in W.$ Then
$$[u_1\b u_2^*,u\b v^*]=\{u_1u_2^*u\}\b v^*-u\b\{vu_1^*u_2\}^*$$
with $\{u_1u_2^*u\}\in U$ and $\{vu_1^*u_2\}\in V.$ Similarly,
$$[w_1\b w_2^*,u\b v^*]=\{w_1w_2^*u\}\b v^*-u\b\{vw_1^*w_2\}^*$$
with $\{w_1w_2^*u\}=0$ and $\{vw_1^*w_2\}\in V.$ Finally,
$$[u_1\b u_2^*,w_1\b w_2^*]=\{u_1u_2^*w_1\}\b w_2^*-w_1\b\{w_2u_1^*u_2\}^*$$
with $\{u_1u_2^*w_1\}=0$ and $\{w_2u_1^*u_2\}=0.$
\end{proof}

\begin{proposition}\label{D} $\PL_E^\ll$ is spanned by terms
\be{47}Y_1^\dl\:Y_s^\dl X_1^\dl \:X_t^\dl Z_1^\dl\:Z_r^\dl N^\ll,\ee
where $s,t,r\ge 0$ and $Y_i\in U\b V^*,\ X_j\in U\b_0 U^*,\ Z_k\in W\b_0 W^*.$
\end{proposition}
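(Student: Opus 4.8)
The plan is to realize $\PL_E^\ll$ as the cyclic module $U(\h\kL)\cdot N^\ll$ generated by the conical polynomial $N^\ll$, and then to trim, using a triangular decomposition of $\h\kL$ adapted both to the frame $e_1,\;,e_r$ and to the tripotent $c=e_{[\l]}$, the spanning set of monomials to just those built from the three distinguished operator types. First I would recall that $\PL_E^\ll$ is an irreducible $\h K$-module and that $N^\ll$ spans the line of conical (highest-weight) vectors in it (this is the fact quoted from \cite{U1} that $\PL_E^\ll$ is the linear span of the $N^\ll$'s for the various frames); since $K$ is compact with complexification $\h K$ and Lie algebra $\h\kL=E\b E^*$, the linear span of the $K$-orbit of $N^\ll$, which equals $U(\h\kL)\cdot N^\ll$, is therefore all of $\PL_E^\ll$.

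Next I would organize $\h\kL$ by the joint Peirce decomposition. By the Peirce multiplication rules \cite{L} a product $a\b b^*$ with $a,b$ in joint Peirce spaces that share no index vanishes, so $\h\kL$ is spanned by the operators $E_{pj}\b E_{jq}^*$ with $0\le p,j,q\le r$. According to the relation between $p$ and $q$ these fall into three mutually exclusive classes that together exhaust $\h\kL$: the \emph{annihilation operators} ($q\ne0$, $p\notin[q]$), which by Proposition \ref{A} kill every minor $N_m$ and hence, since $N^\ll$ is a monomial in the $N_m$ and $A^\dl$ is a derivation, kill $N^\ll$; the \emph{weight-zero operators} (type $(p,p)$, $0\le p\le r$), which act on each $N_m$ — and therefore on $N^\ll$ — by a scalar, by the semi-invariance of the Jordan determinants $\lD_{[m]}$; and the \emph{creation operators} ($p\ne0$, $q\notin[p]$), which by Lemma \ref{B} lie in $U\b_0 U^*\op U\b V^*\op W\b_0 W^*$. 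Choosing a Poincaré--Birkhoff--Witt basis of $U(\h\kL)$ in which the creation operators precede the weight-zero operators, which in turn precede the annihilation operators, I expand an arbitrary element of $U(\h\kL)$ as a sum of monomials $C\.H\.A$ with $C,H,A$ products of creation, weight-zero, resp. annihilation operators; applied to $N^\ll$, the rightmost factor of $A$ already annihilates $N^\ll$ unless $A$ is empty, and $H$ then merely rescales $N^\ll$. Hence $\PL_E^\ll$ is spanned by the vectors $C_1^\dl\:C_n^\dl N^\ll$ with each $C_i\in U\b_0 U^*\op U\b V^*\op W\b_0 W^*$, and by multilinearity we may take each $C_i$ in a single one of the three summands.

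It then remains to sort the factors into the prescribed order — all $U\b V^*$-factors first, then $U\b_0 U^*$, then $W\b_0 W^*$ — using the commutation relations of Lemma \ref{C}: an element of $U\b_0 U^*$ can be moved to the right past an element of $U\b V^*$ at the cost of a commutator term again lying in $U\b V^*$ (hence with one fewer operator), likewise for $W\b_0 W^*$ past $U\b V^*$, while an element of $W\b_0 W^*$ commutes exactly with an element of $U\b_0 U^*$. A double induction, on the length $n$ of the monomial and on the number of out-of-order adjacent pairs in it, then rewrites every $C_1^\dl\:C_n^\dl N^\ll$ as a linear combination of terms $Y_1^\dl\:Y_s^\dl X_1^\dl\:X_t^\dl Z_1^\dl\:Z_r^\dl N^\ll$ of the asserted shape, which proves the proposition.

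The step I expect to be the main obstacle is the middle claim of the second paragraph: that every weight-zero operator $E_{pj}\b E_{jp}^*$ — in particular the ones coming from the index $0$ in the non-tube case, such as $E_{0j}\b E_{j0}^*$ — acts on each minor $N_m$ by a scalar. The point is to use the Peirce multiplication rules to check that such an operator commutes with the Peirce projection $P_{[m]}$ and restricts to an element of the structure Lie algebra of $E_{[m]}$, so that the semi-invariance of $\lD_{[m]}$ applies; this is a routine but somewhat delicate index-matching argument (cf. \cite{U1}). By contrast the reordering step is elementary bookkeeping once Lemma \ref{C} is available, and the exhaustion and disjointness of the three index classes is immediate from the defining inequalities.
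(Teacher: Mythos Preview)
Your proposal is correct and follows essentially the same approach as the paper: the tripartition of $\h\kL$ into creation, weight-zero, and annihilation operators indexed by the Peirce components, the elimination of the latter two via Proposition \ref{A} and the fact $\DL^\dl N^\ll\ic\Cl N^\ll$ (which the paper cites from \cite{U1} and you correctly flag as the delicate step), and the final reordering via Lemma \ref{C} are all identical to the paper's argument. The only cosmetic difference is that you invoke a PBW ordering on all of $\h\kL$ to bring the annihilation and weight-zero factors to the right in one stroke, whereas the paper first restricts to positive root spaces (which lie in $\AL\op\DL$) and then proves the commutation $[\AL,\DL]\ic\AL$ to push the $\DL$-factors rightward; your variant is slightly more economical but uses the same ingredients.
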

\begin{proof} There exists a Cartan subalgebra $\hL\ic\h\kL=E\b E^*$ containing
$$\hL_-:=\Cl\<e_j\b e_j^*:\ 1\le j\le r\>$$
such that the root decomposition
$$\h\kL=\hL\op\S_\la\h\kL_\la$$
has $N^\ll$ as the highest weight vector \cite{U1}. Then $\PL_E^\ll$ is spanned by terms $A_1^\dl\:A_n^\dl N^\ll,$ where 
$n\ge 0$ and $A_i\in\h\kL_\la$ for roots $\la>0.$ By \cite[Theorem 1.7]{U1} 
$$\AL:=\S_{p\ne 0,\ q\notin[p]}\S_j E_{pj}\b E_{jq}^*=\S_{\la>0,\ \la|_{\hL_-}\ne 0}\h\kL_\la$$  
and 
$$\DL:=\S_{p=0}^r\S_j E_{pj}\b E_{jp}^*=\hL\op\S_{\la|_{\hL_-}=0}\h\kL_\la.$$
Thus the positive root spaces $\h\kL_\la$ belong to $\AL\op\DL.$ We claim that
$$[\AL,\DL]\ic\AL.$$ 
In fact, let $x\in E_{pj},y\in E_{jq},u\in E_{ab},v\in E_{ba}.$ Then $x\b y^*=e_p\b z^*$ for some $z\in E_{pq}.$ Hence
$$[x\b y^*,u\b v^*]=[e_p\b z^*,u\b v^*]=\{e_pz^*u\}\b v^*-u\b\{ve_p^*z\}^*.$$
If $\{e_pz^*u\}\ne 0$ then $q$ must match $a$ or $b.$ Assume $q=a.$ Then  
$\{e_pz^*u\}\b v^*\in E_{pb}\b E_{bq}^*\ic\AL.$ Similarly, if $\{ve_p^*z\}\ne 0$ then $p$ must match $a$ or $b.$ Assume $p=a.$ Then $u\b\{ve_p^*z\}^*\in E_{pb}\b E_{bq}^*\ic\AL.$ This proves the claim.

Combining \cite[Theorem 1.7 and Lemma 3.4]{U1} one obtains
$$\DL^\dl N^\ll\ic\Cl N^\ll.$$
It follows that $\PL_E^\ll$ is spanned by terms $A_1^\dl\:A_n^\dl N^\ll,$ where $n\ge 0$ and $A_i\in\AL.$ By Lemma \ref{B} $\AL$ is spanned by transformations of the form $X\in U\b U^*,\ Y\in U\b V^*$ and $Z\in W\b W^*.$ Now the required ordering \er{47} follows from Lemma \ref{C} since
$$Z^\dl X^\dl=X^\dl Z^\dl+[Z^\dl,X^\dl]=X^\dl Z^\dl+[Z,X]^\dl=X^\dl Z^\dl,$$   
$$Z^\dl Y^\dl=Y^\dl Z^\dl+[Z^\dl,Y^\dl]=Y^\dl Z^\dl+[Z,Y]^\dl,$$   
$$X^\dl Y^\dl=Y^\dl X^\dl+[X^\dl,Y^\dl]=Y^\dl X^\dl+[X,Y]^\dl$$
with $[Z,X]=0,\ [Z,Y]\in U\b V^*$ and $[X,Y]\in U\b V^*.$
\end{proof}

{\bf Step 2} concerns vector fields in $U\b U^*.$ Put
$$\ll':=(\ll_1-\ll_{\l+1},\;,\ll_\l-\ll_{\l+1},0^{(r-\l)})$$
and $\h\ll^*=(\ll_{\l+1}^{(\l)},\ll_{\l+1},\;,\ll_r).$ Then
$$N^\ll=N^{\ll'}\ N^{\h\ll^*}.$$

\begin{lemma}\label{F} Let $X\in U\b U^*$ and $f\in\PL_U.$ Then
$$X^\dl(f\oc P_U)=(X|_U^\dl f)\oc P_U.$$
More generally, for $X_1,\;,X_t\in U\b U^*,$
$$X_1^\dl\:X_t^\dl(f\oc P_U)=(X_1|_U^\dl\:X_t|_U^\dl f)\oc P_U$$
\end{lemma}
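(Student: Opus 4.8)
The statement is a compatibility of the differential operators $X^\dl$ (for $X\in U\b U^*$) with the ``restriction to $U$'' operation $f\mapsto f\oc P_U$. The plan is to reduce everything to the first (single-$X$) assertion and then iterate. First I would unwind the definition of $X^\dl$: for any $A\in\h\kL$ and any polynomial $g$ on $E$ one has $(A^\dl g)(z)=g'(z)Az$, the derivative of $g$ in the direction $Az$. Applying this to $g=f\oc P_U$ gives $(X^\dl(f\oc P_U))(z)=(f\oc P_U)'(z)\,Xz=f'(P_Uz)\,P_U(Xz)$ by the chain rule, since $P_U$ is linear. So the whole claim comes down to the identity $P_U(Xz)=(X|_U)(P_Uz)$ for all $z\in E$, i.e.\ that $X\in U\b U^*$ maps $E$ into $U$ and kills the complementary Peirce spaces $V=E_c^1$ and $W=E_c^0$, and that its action on $U$ is exactly $X|_U$.

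The key step, therefore, is the Peirce-calculus fact that a triple multiplication operator $u_1\b u_2^*$ with $u_1,u_2\in U=E_c^2$ satisfies $(u_1\b u_2^*)E\ie U$ and $(u_1\b u_2^*)|_V=(u_1\b u_2^*)|_W=0$. This is an immediate consequence of the Peirce multiplication rules recalled in Section~5: writing $E=\sum_{0\le i\le j\le r}E_{ij}$ jointly with $c=e_{[\l]}$, one has $U=\sum_{i,j\in[\l]}E_{ij}$, $V=\sum_{i\in[\l],\,j\notin[\l]}E_{ij}$, $W=\sum_{i,j\notin[\l]}E_{ij}$, and $\{E_{ij}E_{jm}^*E_{mn}\}\ie E_{in}$ with the product vanishing whenever the indices fail to match. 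Decomposing $u_1,u_2$ into their $E_{ij}$-components (all with indices in $[\l]$) and an arbitrary $z$ likewise, every surviving term $\{u_1u_2^*z\}$ forces the ``$z$-side'' index to lie in $[\l]$, hence lands in $U$; in particular $z\in V$ or $z\in W$ contributes nothing. Since $U\b U^*$ is spanned by such $u_1\b u_2^*$, linearity gives $P_U\oc X=X|_U\oc P_U$ on all of $E$, and the single-operator identity $X^\dl(f\oc P_U)=(X|_U^\dl f)\oc P_U$ follows.

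For the iterated statement I would argue by induction on $t$. The point is that $X_1^\dl\cdots X_{t-1}^\dl(f\oc P_U)=g\oc P_U$ by the induction hypothesis, where $g=X_1|_U^\dl\cdots X_{t-1}|_U^\dl f\in\PL_U$; one more application of the base case (with $X_t$ and the polynomial $g$) yields $X_t^\dl(g\oc P_U)=(X_t|_U^\dl g)\oc P_U$, which is the claim for $t$. (One should note that $X^\dl$ preserves $\PL_U\oc P_U$, so the induction stays inside the right class of polynomials — this is exactly what the base case tells us.)

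\textbf{Anticipated obstacle.} There is no serious obstacle here; the only thing requiring care is the bookkeeping in the Peirce-rule computation showing $P_U\oc(u_1\b u_2^*)=(u_1\b u_2^*)|_U\oc P_U$ — specifically, checking that when $u_1\in E_{ab}$, $u_2\in E_{cd}$ with $a,b,c,d\in[\l]$ and $z\in E_{ij}$, the term $\{u_1u_2^*z\}$ is nonzero only if the matching forces $\{i,j\}$ to intersect $[\l]$ in a way that puts the result in $U$, and in that case reproduces the action of $u_1\b u_2^*$ restricted to $U$. This is routine given the multiplication rules quoted from \cite{L}, but it is the one place where one must be attentive to the index matching; everything else is the chain rule and an induction.
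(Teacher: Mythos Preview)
Your overall strategy --- chain rule, then the commutation $P_U\circ X=X|_U\circ P_U$, then induction on $t$ --- is exactly the paper's. But your justification of the commutation relation contains a false claim. You assert that for $u_1,u_2\in U$ the operator $u_1\b u_2^*$ ``kills the complementary Peirce spaces $V$ and $W$''. This is true for $W$ (no index can match), but it is \emph{false} for $V$: by the Peirce rule $\{E_c^2\,E_c^{2*}\,E_c^1\}\subset E_c^1$, so $(u_1\b u_2^*)V\subset V$, not $\{0\}$. In the matrix model with $u_1=\begin{pmatrix}a_1&0\\0&0\end{pmatrix}$, $u_2=\begin{pmatrix}a_2&0\\0&0\end{pmatrix}$ and $z=\begin{pmatrix}0&v_0\\v^0&0\end{pmatrix}\in V$ one computes $(u_1\b u_2^*)z=\begin{pmatrix}0&a_1a_2^*v_0\\v^0a_2^*a_1&0\end{pmatrix}\in V$, which is nonzero in general. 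Your index-matching sketch (``the matching forces $\{i,j\}$ to intersect $[\l]$ in a way that puts the result in $U$'') goes wrong precisely here: when $z\in E_{pq}$ with $p\in[\l]$, $q\notin[\l]$, the match occurs at $p$, and the output lies in $E_{aq}\subset V$, not in $U$.

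The fix is immediate and is what the paper actually uses: one only needs that $X\in U\b U^*$ \emph{preserves} the Peirce decomposition $E=U\oplus V\oplus W$ (i.e.\ $XU\subset U$, $XV\subset V$, $XW\subset W$), which follows from $\{E_c^2E_c^{2*}E_c^\alpha\}\subset E_c^\alpha$. This already gives $P_UXz=P_UXP_Uz=X|_U P_Uz$, and then your chain-rule computation and induction go through unchanged.
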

\begin{proof} Since $X$ preserves the Peirce decomposition \er{25} it follows that $P_UXz=P_UX P_Uz=X|_UP_Uz.$ Therefore 
$$X^\dl(f\oc P_U)z=(f\oc P_U)'(z)Xz=f'(P_Uz)P_UXz=f'(P_Uz)X|_U P_Uz=(X|_U^\dl f)P_Uz.$$
\end{proof}

Consider the subgroup $\h K_U:=\ <\exp U\b U^*>\ \ic\h K.$ In the matrix case \er{26} $\h K_U$ consists of the transformations
$$k\bb u{v_0}{v^0}w=\bb a001\bb u{v_0}{v^0}w\bb{\la}001=\bb{au\la}{a v_0}{v^0\la}w.$$  

\begin{lemma}\label{G}
$$(J_U^{\ll'}\oc P_U)N^{\h\ll^*}\ic J^\ll.$$
\end{lemma}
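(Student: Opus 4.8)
The plan is to show that each generator of the product $(J_U^{\ll'}\oc P_U)\,N^{\h\ll^*}$ lies in $J^\ll$, by producing it via creation operators acting on $N^\ll$ and invoking Theorem \ref{i} (equivalently, Proposition \ref{D}). Since $J_U^{\ll'}$ is, by Theorem \ref{i} applied to the triple $U$, spanned by $K_U$-components of type $\ge\ll'$, and since $\PL_U^{\ll'}$ is generated inside $\PL_U$ by the conical polynomial $N_U^{\ll'}$ for varying frames of $U$, it suffices to treat a spanning set. By Lemma \ref{F}, creation operators $X\in U\b U^*$ act on $f\oc P_U$ as $(X|_U^\dl f)\oc P_U$, so the $\h\kL_U$-module generated by $N_U^{\ll'}\oc P_U$ inside $\PL_E$ is exactly $\PL_U^{\ll'}\oc P_U$; thus $J_U^{\ll'}\oc P_U$ is spanned by terms $(X_1|_U^\dl\cdots X_t|_U^\dl N_U^{\ll'})\oc P_U$ together with the lower-order analogues obtained from other frames. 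Because $N^{\h\ll^*}$ is, by construction, supported on the complementary Peirce data, one has $N^{\ll'}\oc P_U$ replaced by the genuine factor $N^{\ll'}$ only after multiplying by $N^{\h\ll^*}$; the key algebraic identity I would use is $N^\ll=N^{\ll'}N^{\h\ll^*}$ stated just above the lemma.

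The core step is then to verify that for $X\in U\b U^*$ one has
\[
X^\dl\big((f\oc P_U)\,N^{\h\ll^*}\big)=\big(X^\dl(f\oc P_U)\big)N^{\h\ll^*}+(f\oc P_U)\,X^\dl N^{\h\ll^*},
\]
and that the last term vanishes, i.e. $X^\dl N^{\h\ll^*}=0$ for all $X\in U\b U^*$. This is where I expect the main obstacle to be. To prove it I would argue as in Proposition \ref{A}: $N^{\h\ll^*}$ is a product of minors $N_m$ with $m\ge\l+1$ (since $\h\ll^*=(\ll_{\l+1}^{(\l)},\ll_{\l+1},\ldots,\ll_r)$ is constant on $[\l]$, the factors $N_m^{\h\ll^*_m-\h\ll^*_{m+1}}$ for $m<\l$ disappear, leaving only $N_\l^{\,\h\ll^*_\l-\h\ll^*_{\l+1}}=N_\l^0=1$ and the higher minors), so each surviving $N_m$ is $\lD_{[m]}\oc P_{[m]}$ with $m\ge\l+1$. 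A transformation $X\in U\b U^*=E_{[\l]}\b_0 E_{[\l]}^*$ has trace zero on $E_{[\l]}$ and preserves the Peirce grading, hence restricts to a trace-zero element of $E_{[m]}\b E_{[m]}^*$ for every $m\ge\l$; by semi-invariance of the Jordan determinant $\lD_{[m]}$ (the displayed formula \er{18} and its infinitesimal form \er{33}), a trace-zero derivation annihilates $\lD_{[m]}$, hence $X^\dl N_m=0$, and by the product rule $X^\dl N^{\h\ll^*}=0$.

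Granting $X^\dl N^{\h\ll^*}=0$, induction on $t$ gives
\[
X_1^\dl\cdots X_t^\dl\big((N_U^{\ll'}\oc P_U)\,N^{\h\ll^*}\big)
=\big(X_1^\dl\cdots X_t^\dl(N_U^{\ll'}\oc P_U)\big)N^{\h\ll^*}
=\big((X_1|_U^\dl\cdots X_t|_U^\dl N_U^{\ll'})\oc P_U\big)N^{\h\ll^*},
\]
using Lemma \ref{F} for the last equality and the fact that the $X_i$ commute past each other modulo $U\b V^*$-terms which also annihilate $N^{\h\ll^*}$ — more cleanly, one applies the identity with each intermediate product in place of $N_U^{\ll'}$. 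The left-hand side lies in $J^\ll$ because $X_i\in U\b U^*\ic\AL$ and Proposition \ref{D} exhibits $\PL_E^\ll$ (hence, by iterating creation operators, all of $J^\ll=\S_{\lm\ge\ll}\PL_E^\lm$) as the $\AL$-span of $N^\ll=N^{\ll'}N^{\h\ll^*}$; concretely, starting from $N^\ll$ itself one has $N^{\ll'}\oc P_U\cdot N^{\h\ll^*}$ for the trivial word, and each creation operator keeps us inside $J^\ll$ since $J^\ll$ is $\h\kL$-stable under $\AL$ (it is the sum of all $\PL_E^\lm$, $\lm\ge\ll$, and $\AL$ raises signatures). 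Since $J_U^{\ll'}$ is spanned (over all frames of $U$) by such iterated-derivative images of conical polynomials $N_U^{\ll'}$, and these identities are $K_U$-equivariant, it follows that $(J_U^{\ll'}\oc P_U)N^{\h\ll^*}\ic J^\ll$, as claimed. The one point requiring care is the interchange of the $X_i$ with factors in $U\b V^*$ arising from Proposition \ref{D}'s normal form; I would handle this by checking, again via Lemma \ref{C} and a trace argument, that $U\b V^*$ also annihilates $N^{\h\ll^*}$ (indeed $\{vu^*e_p\}$-type outputs land in Peirce blocks $E_{pj}$ with $j\notin[\l]$, which do not contribute to the minors $N_m$, $m\ge\l+1$, after the relevant matching — this is the residual computation I would not grind through here).
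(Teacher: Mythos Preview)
Your approach is the Lie-algebra version of the paper's group-based argument and is essentially correct, but it is far more roundabout than what the paper does. The paper's proof is three lines: since $\PL_U^{\ll'}$ is spanned by vectors $h=N_U^{\ll'}\oc k|_U$ with $k\in\h K_U$, and since $k|_U\,P_U=P_U\,k$ (because $k$ preserves the Peirce decomposition), $N_U^{\ll'}\oc P_U=N^{\ll'}$, and $N^{\h\ll^*}\oc k=N^{\h\ll^*}$, one obtains directly
\[
(h\oc P_U)\,N^{\h\ll^*}=(N^{\ll'}\oc k)\,(N^{\h\ll^*}\oc k)=(N^{\ll'}N^{\h\ll^*})\oc k=N^\ll\oc k\in\PL_E^\ll\ic J^\ll.
\]
The invariance $N^{\h\ll^*}\oc k=N^{\h\ll^*}$ is precisely the integrated form of your key claim $X^\dl N^{\h\ll^*}=0$; working at the group level avoids all the bookkeeping with iterated derivatives and commutators.

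Two points to clean up in your write-up. First, the identification $U\b U^*=E_{[\l]}\b_0 E_{[\l]}^*$ is false: the trace-zero part is a proper subspace, and your semi-invariance argument for $X^\dl N_m=0$ genuinely needs $X\in U\b_0 U^*$. This is harmless in the end (the trace part acts by a scalar on each $\PL_U^\mu$, so it does not enlarge the span), but it should be stated correctly. Second, the reason $X_1^\dl\cdots X_t^\dl N^\ll\in J^\ll$ is simply that $\PL_E^\ll$ is a $\h\kL$-submodule; there is no need to invoke Proposition \ref{D} or ``signature-raising'', and your final paragraph about $U\b V^*$-operators annihilating $N^{\h\ll^*}$ is irrelevant to this lemma. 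Finally, note that both your argument and the paper's, as written, actually treat only $h\in\PL_U^{\ll'}$ rather than all of $J_U^{\ll'}$; this is what is needed in the application (Proposition \ref{H}), since $\ML_{U,c}J_U^{\ll'}=\ML_{U,c}\PL_U^{\ll'}$.
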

\begin{proof} Let $h\in J_U^{\ll'}.$ We may assume that $h\in\PL_U^{\ll'}$ and moreover that $h=N_U^{\ll'}\oc k|_U$ for some 
$k\in\h K_U.$ Then $N_U^{\ll'}\oc P_U=N^{\ll'}$ and $N^{\h\ll^*}\oc k=N^{\h\ll^*}.$ Therefore 
$$(h\oc P_U)N^{\h\ll^*}=((N_U^{\ll'}\oc k|_U)\oc P_U)\ N^{\h\ll^*}=((N_U^{\ll'}\oc P_U)\oc k)\ (N^{\h\ll^*}\oc k)$$
$$=(N^{\ll'}\oc k)\ (N^{\h\ll^*}\oc k)=(N^{\ll'}N^{\h\ll^*})\oc k=N^\ll\oc k\in J^\ll.$$
\end{proof}

\begin{proposition}\label{H} Let $X_1,\;,X_t\in U{\b}_0 U^*.$ Then there exists a constant $C$ such that
$$X_1^\dl\:X_t^\dl N^\ll-C N^\ll\in\ML_c J^\ll.$$
\end{proposition}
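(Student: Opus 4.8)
The goal is to show that applying a string of trace-zero vector fields $X_1^\dl\:X_t^\dl$ from $U\b_0U^*$ to the conical polynomial $N^\ll$ produces $N^\ll$ itself, up to a scalar multiple and an error lying in $\ML_cJ^\ll$. The natural strategy is to use the factorization $N^\ll=N^{\ll'}\,N^{\h\ll^*}$ together with Lemma~\ref{F}, which tells us that vector fields in $U\b U^*$ act only on the $\PL_U$-factor: since $N^{\ll'}=N_U^{\ll'}\oc P_U$ depends only on the $U$-variable while $N^{\h\ll^*}$ will turn out to be ``transverse'' to $U\b U^*$, a Leibniz-type argument should let us pull the differentiation inside, reducing the whole computation to one intrinsic to the Peirce $2$-space $U=E_c^2$. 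Concretely, I would first record that $X^\dl N^{\h\ll^*}$ lies in $\Cl\,N^{\h\ll^*}$-times-lower-order-in-$U$ (or even just check $X^\dl$ annihilates or rescales $N^{\h\ll^*}$) so that iterated application of the $X_i^\dl$ to the product $N^{\ll'}N^{\h\ll^*}$ equals $(X_1|_U^\dl\:X_t|_U^\dl N_U^{\ll'})\oc P_U$ times $N^{\h\ll^*}$, plus terms where at least one $X_i^\dl$ hit $N^{\h\ll^*}$.

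\textbf{Reduction to the tube-type / full-rank case in $U$.} Having localized the problem to $U$, the claim becomes: for $X_1|_U,\;,X_t|_U\in U\b_0U^*$ acting on the determinant-power polynomial $N_U^{\ll'}$ (which is, up to the factor $N^{\h\ll^*}$, a conical polynomial for a partition supported entirely in the ``rectangular block'' of width $\l$), one has $X_1|_U^\dl\:X_t|_U^\dl N_U^{\ll'}=C\,N_U^{\ll'}$ for a scalar $C$. But $U=E_c^2$ is a Jordan \emph{algebra} with unit $c$, and $N_U^{\ll'}$ is (a power product of) principal minors of this algebra; the group $\h K_U=\langle\exp U\b U^*\rangle$ acts on $N_U^{\ll'}$ by the determinant character via the semi-invariance \er{18}, so $U\b_0U^*$, being the trace-zero part of the Lie algebra, acts by the \emph{derivative of that character}, which is the trace functional — hence kills the top-weight vector and scales it. More carefully: $N_U^{\ll'}$ is a highest-weight vector for $\h K_U$, the trace-zero subalgebra sits inside the span of the positive/negative root spaces plus the traceless Cartan part, and the Cartan part acts on a weight vector by a scalar while the root-space part, when it does not annihilate, produces genuinely lower-weight terms. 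Since $U\b_0U^*$ spans (Cartan-traceless)$\,\op\,$(root spaces), iterating gives $C\,N_U^{\ll'}$ modulo a sum of $\h K_U$-translates of conical polynomials $N_U^\lm$ with $\lm>\ll'$ in the $U$-dominance order, i.e. modulo $\ML_{U,0}J_U^{\ll'}$.

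\textbf{Transporting the error back to $E$.} The final step is to check that every error term lands in $\ML_cJ^\ll$. An error term in $U$ has the shape $(g\oc P_U)\,N^{\h\ll^*}$ with $g\in\ML_{U,0}J_U^{\ll'}$, i.e. $g$ is a product of a linear form vanishing at $0\in U$ (equivalently vanishing at $c$, after composing with $P_U$ and translating) with an element of $J_U^{\ll'}$; by Lemma~\ref{G} we have $(J_U^{\ll'}\oc P_U)N^{\h\ll^*}\ic J^\ll$, and the extra linear factor, pulled back via $P_U$, vanishes at $c$ and therefore supplies the $\ML_c$-factor. The terms where some $X_i^\dl$ differentiated $N^{\h\ll^*}$ are handled the same way: such a term is $N^{\ll'}$ times $(\,\text{something of weight}>\h\ll^*\,)$, again a product of a linear form vanishing at $c$ with an element of $J^\ll$, using \er{12} to see that raising the weight stays inside $J^\ll$ while the linear shift sits in $\ML_c$.

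\textbf{Main obstacle.} The routine parts are the Leibniz bookkeeping and the invocation of Lemma~\ref{G}. The genuine content — and the step I expect to require the most care — is the assertion that $U\b_0U^*$ acts on the highest-weight vector $N_U^{\ll'}$ by a scalar modulo $\ML_{U,0}J_U^{\ll'}$; this is where one must pin down that the traceless vector fields generate exactly the Cartan-traceless-plus-root-space complement and that the Cartan action on the highest weight is the (scalar) value $\langle\ll',\cdot\rangle$ of the weight on the traceless Cartan, while crucially the \emph{root-space} contributions, though nonzero in general, always raise the weight and hence fall into the ideal generated by strictly larger partitions. Controlling the non-commutativity when several $X_i^\dl$ are composed — ensuring that commutators $[X_i,X_j]\in U\b U^*$ again either scale or raise the weight — is the technical heart, and will likely be organized by induction on $t$ using Lemma~\ref{C}-style commutation relations internal to $U\b U^*$.
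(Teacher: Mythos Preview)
There is a genuine gap at the ``transport'' step. You place the error in $\ML_{U,0}J_U^{\ll'}$ and assert that composing with $P_U$ converts ``vanishing at $0\in U$'' into ``vanishing at $c$''. But $P_Uc=c$, so $(g\oc P_U)(c)=g(c)$; there is no translation, and a function with $g(0)=0$ need not have $g(c)=0$. What is actually required is $\ML_{U,c}J_U^{\ll'}$, and your weight argument does not produce it: applying the $X_i|_U^\dl$ to $N_U^{\ll'}$ stays inside the irreducible $\PL_U^{\ll'}$ (it does \emph{not} move to partitions $\lm>\ll'$ as you suggest), so the output is just some other vector of $\PL_U^{\ll'}$ with no a priori vanishing at $c$. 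Relatedly, $\h K_U$ does not act on $N_U^{\ll'}$ by a character unless $\ll'$ is a multiple of $(1^{(\l)})$; in general $N_U^{\ll'}$ is only a highest-weight vector, so the ``derivative of that character'' reasoning is not available.

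The paper replaces all of your weight analysis by a one-line commutative-algebra observation. Since $\x{rank}(c)=\l=\x{rank}(U)$, the point $c$ is \emph{regular} for $J_U^{\ll'}$, so by Proposition~\ref{b} (applied inside $U$) the localization $J_U^{\ll'}/\ML_{U,c}J_U^{\ll'}$ is one-dimensional via evaluation at $c$. Hence for any $f\in J_U^{\ll'}$ one has $f-f(c)\,N_U^{\ll'}\in J_U^{\ll'}\ui\ML_{U,c}=\ML_{U,c}J_U^{\ll'}$ (using $N_U^{\ll'}(c)=1$); take $f=X_1|_U^\dl\:X_t|_U^\dl N_U^{\ll'}$ and $C=f(c)$. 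Then $\ML_{U,c}\oc P_U\ic\ML_c$ and Lemma~\ref{G} give $(h\oc P_U)N^{\h\ll^*}\in\ML_cJ^\ll$. Finally, the Leibniz step is cleaner than you expect: each trace-zero $X_i$ is a commutator sum in $U\b U^*\ic E_{[m]}\b E_{[m]}^*$ for $m>\l$, so $X_i^\dl N_m=0$ by semi-invariance of $\lD_{[m]}$, whence $X_i^\dl N^{\h\ll^*}=0$ exactly and $X_1^\dl\:X_t^\dl N^\ll=(X_1^\dl\:X_t^\dl N^{\ll'})\,N^{\h\ll^*}$ with no cross terms to handle.
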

\begin{proof} Since $\x{rank}(c)=\l=\x{rank}(U)$ it follows that $c$ is a regular point for $J_U^{\ll'}.$ 
Therefore the maximal ideal $\ML_{U,c}\ic\PL_U$ at $c\in U$ satisfies $J_U^{\ll'}+\ML_{U,c}=\PL_U.$ Thus for every 
$f\in J_U^{\ll'}$ we have
$$f-f(c)N_U^{\ll'}\in J_U^{\ll'}\ui\ML_{U,c}=\ML_{U,c}J_U^{\ll'},$$
using \cite[p. 6]{AM} in the last equation. In particular there exist a constant $C$ and $h\in\ML_{U,c}J_U^{\ll'}$ such that 
$$X_1|_U^\dl\:X_t|_U^\dl N_U^{\ll'}=C\ N_U^{\ll'}+h.$$
Applying Lemma \ref{F} we obtain
$$X_1^\dl\:X_t^\dl N^{\ll'}=X_1^\dl\:X_t^\dl(N_U^{\ll'}\oc P_U)=(X_1|_U^\dl\:X_t|_U^\dl N_U^{\ll'})\oc P_U$$
$$=(C\ N_U^{\ll'}+h)\oc P_U=C\ N^{\ll'}+h\oc P_U.$$
Since $\x{tr}_U X_i=0$ it follows that $X_i$ is a commutator sum in $U\b U^*$ and therefore also in 
$E_{[m]}\b E_{[m]}^*$ for all $m>\l.$ Therefore $X_i^\dl N_m=0.$ It follows that
$$X_1^\dl\:X_t^\dl N^\ll=X_1^\dl\:X_t^\dl(N^{\ll'}N^{\h\ll^*})=(X_1^\dl\:X_t^\dl N^{\ll'})N^{\h\ll^*}$$
$$=(C N^{\ll'}+h\oc P_U)N^{\h\ll^*}=C N^\ll+(h\oc P_U)N^{\h\ll^*}.$$
Since $P_Uc=c,$ we have $\ML_{U,c}\oc P_U\ic\ML_c.$ With Lemma \ref{G} it follows that $(h\oc P_U)N^{\h\ll^*}\in\ML_c J^\ll.$
\end{proof}

{\bf Step 3} concerns vector fields in $U\b V^*$ and is the most difficult. We need an analogue of "Cramer's rule" in a Jordan algebra setting. For any Jordan triple $E$ put 
$$Q_xy:=\f12\{xy^*x\}$$ 
and consider the {\bf Bergman endomorphism} $B(x,y)$ on $E$ given by
\be{43}B(x,y)z=z-\{xy^*z\}+Q_xQ_yz.\ee
In the matrix case $E=\Cl^{r\xx s}$ this has the form
$$B(x,y)z=(1_r-xy^*)z(1_s-y^*x).$$
For a unital $J^*$-triple $E$ with unit element $e$ and involution $Q_ez=z^*$ the linear transformation
$$P_z:=Q_zQ_e$$
is called the {\bf quadratic representation} of $z\in E.$ An element $z\in E$ is called invertible if $P_z$ is invertible. In this case, the inverse $z^{-1}$ is defined by $z^{-1}:=P_z^{-1}z.$ For square matrices $E=\Cl^{r\xx r}$ we have $P_zw=zwz$ and $P_z^{-1}z=z^{-1}zz^{-1}=z^{-1}$ is the usual inverse.

The Bergman endomorphism $B(x,y)$ defined in \er{43} belongs to $\h K$ if $\lD(x,y)\ne 0.$ Since $B(x,y)^*=B(y,x)$ and 
$(x\b y^*)^*=y\b x^*$ it follows that $(Q_xQ_y)^*=Q_yQ_x.$ In particular,
$$P_z^*=(Q_zQ_e)^*=Q_eQ_z=Q_eQ_zQ_eQ_e=Q_{Q_ez}Q_e=Q_{z^*}Q_e=P_{z^*}.$$
Let $(z|\lz)$ denote the $K$-invariant inner product normalized by $(e|e)=r.$ At the unit element $e$ the Jordan determinant 
$\lD_e$ has the derivative
$$\lD_e'(e)u=(u|e).$$
For square matrices, the right hand side is the trace of $u.$ If $z\in E$ has strictly positive real part then $P_z^{1/2}$ belongs to the structure group $\h K.$ It follows that
$$\lD_e\oc P_z^{1/2}=\lD_e(z)\lD_e$$
since $\lD_e(z)\lD_e(u)=\lD_e(P_z^{1/2}e)\lD_e(u)=\lD_e(P_z^{1/2}u)=(\lD_e\oc P_z^{1/2})(u)$ for all $u.$ Taking the derivative at $e$ yields
$$\lD_e'(z)(P_z^{1/2}u)=(\lD_e\oc P_z^{1/2})'(e)u=\lD_e(z)\lD_e'(e)u=\lD_e(z)(u|e).$$
By analytic continuation this implies "Cramer's rule"
$$\lD_e'(z)v=\lD_e(z)(P_z^{-1/2}v|e)=\lD_e(z)(v|P_{z^*}^{-1/2}e)=\lD_e(z)\ (v|z^{-*})$$
whenever $z$ is invertible. 

In the following we fix $1\le\l<n\le r$ and put
$$n^+:=\begin{cases}n+1&n<r\\0&n=r\end{cases}$$
The second case arises only for domains not of tube type. Define
$$E_{[m]n^+}:=\S_{i=1}^m E_{in^+}.$$

\begin{proposition}\label{J} Let $1\le m\le n$ and $y\in E_{[m]n^+}.$ Then 
$$N_m'(z)\{e_{[m]}\{e_{[m]}(P_{[m]}z)^*y\}^*z\}=(z|y)\ N_m(z).$$
\end{proposition}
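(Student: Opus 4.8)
The plan is to apply the chain rule \er{33} together with the Jordan-theoretic Cramer's rule established above in order to transfer the problem onto the $K$-invariant inner product, and then to reduce it to the statement that two explicit triple-multiplication operators are mutually inverse on the Peirce $1$-space of $e_{[m]}$.

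Put $e:=e_{[m]}$, $U:=E_{[m]}=E_e^2$, $w:=P_{[m]}z$, and $\lx:=\{e\{ew^*y\}^*z\}$, the vector to which $N_m'(z)$ is applied in the statement. Since the conjugate-linear dependence of $\{ew^*y\}$ on $w$ is undone once this vector is placed into the conjugate-linear slot of the outer triple product, $\lx$ is a holomorphic polynomial in $z$, homogeneous of degree $2$; hence both sides of the asserted equality are homogeneous holomorphic polynomials in $z$ of degree $m+1$, and it suffices to prove it on the dense open set where $N_m(z)\ne0$, i.e. where $w$ is invertible in $U$. Assume this. Because $N_m=\lD_{[m]}\oc P_{[m]}$ with $P_{[m]}$ linear, \er{33} gives $N_m'(z)\lx=\lD_{[m]}'(w)(P_{[m]}\lx)$, and Cramer's rule for the unital $J^*$-triple $U$ (unit $e$, determinant $\lD_{[m]}$) gives $\lD_{[m]}'(w)v=\lD_{[m]}(w)(v|w^{-*})$ for $v\in U$. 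Since $\lD_{[m]}(w)=N_m(z)$ and since $\lx-P_{[m]}\lx\in E_e^1\op E_e^0$ is orthogonal to $w^{-*}\in E_e^2$, we obtain $N_m'(z)\lx=N_m(z)\,(\lx|w^{-*})$. Transferring the triple products across the inner product by the adjoint relation $(a\b b^*)^*=b\b a^*$ (adjoint with respect to the inner product, applied with $a=e$, $b=\{ew^*y\}$) then yields
$$(\lx|w^{-*})=\big(z\ \big|\ \{\{ew^*y\}\,e^*\,w^{-*}\}\big),$$
so the proposition reduces to the Jordan identity $\{\{ew^*y\}\,e^*\,w^{-*}\}=y$, valid for all $y\in E_e^1$ and all $w$ invertible in $U$.

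To prove this identity, use the symmetry of the triple product in its two outer arguments to rewrite its left side as $\{w^{-*}\,e^*\,\{e\,w^*\,y\}\}=(w^{-*}\b e^*)(e\b w^*)y$. It therefore suffices to show that $e\b w^*$ and $w^{-*}\b e^*$ restrict to mutually inverse linear automorphisms of $E_e^1$ whenever $w$ is invertible in $U$; this is a statement purely about the action of the Jordan algebra $U$ on its Peirce $1$-module $E_e^1$. One verifies it from the joint Peirce multiplication rules together with Lemma \ref{z} (needed to handle the index $0$ that occurs when $E$ is not of tube type): $e\b w^*$ and $w^{-*}\b e^*$ implement the action of $w$ and of its Jordan inverse on $E_e^1$, so their composite is the identity. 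In the matrix model $E=\Cl^{r\xx s}$, with $e=e_{[m]}$ the rank-$m$ partial isometry, one has $E_e^1\ap\Cl^{m\xx(s-m)}\op\Cl^{(r-m)\xx m}$, and these two operators act there as left/right multiplication by $w^*$ and by $w^{-*}=(w^*)^{-1}$, which makes the claim transparent. Having established the identity where $N_m\ne0$, one extends it to all of $E$ by continuity, both sides being polynomials.

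The main obstacle is precisely this last identity $\{\{ew^*y\}\,e^*\,w^{-*}\}=y$: it is the one place where the invertibility of $w$ enters in an essential way, and the careful bookkeeping in the joint Peirce decomposition — especially the index $0$ in the non-tube case $n^+=0$, where one must argue via Lemma \ref{z} rather than through tube-type Jordan-algebra determinant identities — is the delicate point. One should also observe that the hypothesis $y\in E_{[m]n^+}$ is used only through the inclusion $E_{[m]n^+}\ic E_{e_{[m]}}^1$ (valid since $n^+\notin[m]$), so the argument actually proves the statement for all $y\in E_{e_{[m]}}^1$.
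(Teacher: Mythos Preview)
Your argument follows essentially the same route as the paper's: restrict to the dense open set where $w=P_{[m]}z$ is invertible in $U=E_{[m]}$, apply the Jordan-theoretic Cramer rule $N_m'(z)\lx=N_m(z)\,(\lx|w^{-*})$, shift the triple product across the $K$-invariant inner product, and reduce the whole statement to the single Jordan identity $\{\{e_{[m]}w^*y\}\,e_{[m]}^*\,w^{-*}\}=y$ for $y\in E_{e_{[m]}}^1$. Your observation that only $y\in E_{e_{[m]}}^1$ is used is correct.

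The one place where your write-up is weaker than the paper is the justification of that last identity. You assert that $e\b w^*$ and $w^{-*}\b e^*$ are mutually inverse on $E_e^1$ because they ``implement the action of $w$ and of its Jordan inverse,'' citing the Peirce rules and Lemma~\ref{z} and checking the matrix case. But Jordan algebras are not associative, so the phrase ``their composite is the identity'' is not automatic from any module picture, and Lemma~\ref{z} by itself does not deliver it. The paper handles this point concretely: from the Peirce vanishing $\{xy^*e_{[m]}\}=0$ (valid for all $x\in E_{[m]}$ and $y\in E_{e_{[m]}}^1$) a two-line application of the Jordan triple identity \er{6} yields the operator identity $\{e_{[m]}x^*y\}\b e_{[m]}^*=y\b x^*$ (equation~\er{48}); evaluating this at $w^{-*}$ turns your ``mutually inverse'' claim into $\{y\,w^*\,w^{-*}\}=\{y\,e_{[m]}^*\,e_{[m]}\}=y$, which is the form the paper then invokes. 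In other words, the cleanest way to fill the gap in your sketch is exactly the derivation of \er{48} --- so your proof and the paper's are the same proof, with the paper supplying the explicit Jordan-triple computation that you summarise in one sentence.
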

\begin{proof} For $x\in E_{[m]}$ the Peirce multiplication rules imply $\{xy^*e_{[m]}\}=0.$ Hence the Jordan triple identity yields
$$y\b x^*=\{ye_{[m]}^*e_{[m]}\}\b x^*-e_{[m]}\b\{xy^*e_{[m]}\}^*=[y\b e_{[m]}^*,e_{[m]}\b x^*]$$
$$=-[e_{[m]}\b x^*,y\b e_{[m]}^*]=-\{e_{[m]}x^*y\}\b e_{[m]}^*+y\b\{e_{[m]}e_{[m]}^*x\}^*
=-\{e_{[m]}x^*y\}\b e_{[m]}^*+2y\b x^*.$$
Therefore
\be{48}\{e_{[m]}x^*y\}\b e_{[m]}^*=y\b x^*.\ee
Now suppose that $P_{[m]}z$ has maximal rank $m,$ so that $e_{[m]}$ is a supporting tripotent of $P_{[m]}z.$ Let 
$(P_{[m]}z)^{-*}$ denote the inverse of $(P_{[m]}z)^*=Q_{e_{[m]}}P_{[m]}z$ in $E_{[m]}.$ Putting 
$x=(P_{[m]}z)^*$ in \er{48} and evaluating at $(P_{[m]}z)^{-*}$ yields
$$\{\{e_{[m]}(P_{[m]}z)^*y\}e_{[m]}^*(P_{[m]}z)^{-*}\}=\{y(P_{[m]}z)^*(P_{[m]}z)^{-*}\}=\{ye_{[m]}^*e_{[m]}\}=y.$$
By density, we may assume that $z\in E$ has $P_{[m]}z\in E_{[m]}$ of maximal rank $m.$ Applying Cramer's rule to the determinant $\lD_{[m]}$ of $E_{[m]}$ we obtain for $\lx\in E$
$$N_m'(z)\lx=\lD_{[m]}'(P_{[m]}z)P_{[m]}\lx=\lD_{[m]}(P_{[m]}z)\ (P_{[m]}\lx|(P_{[m]}z)^{-*})
=N_m(z)\ (\lx|(P_{[m]}z)^{-*}).$$ 
Putting $\lx=\{e_{[m]}\{e_{[m]}(P_{[m]}z)^*y\}^*z\}$ and using associativity of $(\.|\.)$  we obtain 
$$N_m'(z)\{e_{[m]}\{e_{[m]}(P_{[m]}z)^*y\}^*z\}=N_m(z)\ (\{e_{[m]}\{e_{[m]}(P_{[m]}z)^*y\}^*z\}|(P_{[m]}z)^{-*})$$
$$=N_m(z)\ (z|\{\{e_{[m]}(P_{[m]}z)^*y\}e_{[m]}^*(P_{[m]}z)^{-*}\})=N_m(z)(z|y)$$
\end{proof} 

Let $R_u$ denote the representation 
$$R_uv:=\{uc^*v\}$$ 
of $u\in U=E_{[\l]}$ acting on $v\in E_{[\l]n^+}.$ For matrices $E=\Cl^{r\xx s}$ these are the transformations
$$R_{\bb u000}\bb0{v_0}{v^0}0=\bb u000\bb0{v_0}{v^0}0+\bb0{v_0}{v^0}0\bb u000=\bb0{uv_0}{v^0u}0.$$

\begin{proposition}\label{M} Let $\l\le n\le r$ and $v\in E_{[\l]n^+}.$ Then we have for $\l<m\le n$
\be{38}(z|R_u^{-*}v)\ N_m(z)=N_m'(z)\{cv^*z\}+\S_{i=\l+1}^n N_m'(z)\{e_i\{e_i(P_{[n]}z)^*(R_u^{-*}v)\}^*z\}.\ee
\end{proposition}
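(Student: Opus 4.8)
The plan is to derive \er{38} by combining Cramer's rule for the Jordan minor $N_m$ with the identity from Proposition \ref{J}, after carefully splitting the action of $R_u$ according to the joint Peirce decomposition. First I would observe that the element $w:=R_u^{-*}v$ lies in $E_{[\l]n^+}\ic E_{[n]n^+}$, so Proposition \ref{J} applies with $y=w$ and gives, for each $\l<m\le n$ (note $e_{[m]}$ contains $c=e_{[\l]}$ since $\l<m$),
$$N_m'(z)\{e_{[m]}\{e_{[m]}(P_{[m]}z)^*w\}^*z\}=(z|w)\ N_m(z).$$
So the right-hand side of \er{38} is already of the correct shape $(z|R_u^{-*}v)N_m(z)$ provided I can identify the left-hand side of this last display with $N_m'(z)\{cv^*z\}+\S_{i=\l+1}^n N_m'(z)\{e_i\{e_i(P_{[n]}z)^*w\}^*z\}$. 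Thus the whole proposition reduces to a Peirce-bookkeeping identity inside the tangent vector $\{e_{[m]}\{e_{[m]}(P_{[m]}z)^*w\}^*z\}$.

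The key algebraic step is to expand $e_{[m]}=c+(e_{\l+1}+\dots+e_m)$ and track which Peirce components survive. Writing $w\in E_{[\l]n^+}=\S_{i\le\l}E_{in^+}$, the inner bracket $\{e_{[m]}(P_{[m]}z)^*w\}$ must be analyzed: since $e_j\b e_j^*$ for $j\le\l$ acts on $E_{in^+}$ and $P_{[m]}z\in E_{[m]}=\S_{i,j\le m}E_{ij}$, the Peirce multiplication rules force only certain index matchings. The point is that the ``$c$-part'' of $e_{[m]}$ produces exactly the term $\{cv^*z\}$ after using $R_u^{-*}$ to undo the $R_u$-twist (here one uses $R_u w=R_u R_u^{-*}v$ and the fact that $R_u$ acts invertibly on $E_{[\l]n^+}$, together with a relation of the form $\{c(\cdot)^*\cdot\}$ against $R_u$ that reduces $(P_{[m]}z)^*w$-type expressions back to $v$), while the ``$e_i$-parts'' for $\l<i\le m$ produce the summands $\{e_i\{e_i(P_{[n]}z)^*w\}^*z\}$. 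One should also check that replacing $P_{[m]}z$ by $P_{[n]}z$ in those summands is harmless, since the extra Peirce blocks $E_{ij}$ with $m<i\le n$ are killed by the $e_i\b e_i^*$ projections for $i\le m$; and that no terms with $i>m$ appear because $e_{[m]}$ only involves $e_1,\dots,e_m$ — so the sum over $i$ effectively runs up to $m$, and the extra terms $\l<i\le n$ with $i>m$ in \er{38} must be shown to vanish when paired against $N_m'(z)$, again by the trace/commutator argument used in Proposition \ref{A} and Lemma \ref{z} (those $e_i\b e_i^*$ with $i>m$ give transformations lying in $E_{[m]}^\perp$-directions or traceless in $E_{[m]}\b E_{[m]}^*$, hence annihilate $N_m$).

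I would organize the computation as: (1) apply Cramer's rule $N_m'(z)\lx=N_m(z)(\lx|(P_{[m]}z)^{-*})$ as in the proof of Proposition \ref{J}, reducing everything to the scalar identity
$$(\{cv^*z\}+\S_{i=\l+1}^n\{e_i\{e_i(P_{[n]}z)^*w\}^*z\}\ \big|\ (P_{[m]}z)^{-*})=(z|w);$$
(2) move $(P_{[m]}z)^{-*}$ through the outer brackets using associativity of $(\cdot|\cdot)$ under the triple product, turning the left side into $(z|\{cv^*(P_{[m]}z)^{-*}\}+\S_i\{\{e_i(P_{[n]}z)^*w\}e_i^*(P_{[m]}z)^{-*}\})$; (3) show the second argument equals $w$ by the Peirce identity \er{48} (applied with $e_{[m]}$, and its index-split version with the individual $e_i$) together with $\{e_{[\l]}(P_{[m]}z)^*w\}$-manipulations and the definition $w=R_u^{-*}v$, so that the $c$-term contributes $\{e_{[\l]}e_{[\l]}^*w\}$-type pieces and the $e_i$-terms contribute $\{e_ie_i^*w\}$-pieces, summing to $\{e_{[m]}e_{[m]}^*w\}$; and finally (4) note $\{e_{[m]}e_{[m]}^*w\}=w$ since $w\in E_{[\l]n^+}\ic E_{e_{[m]}}^1$ is in the Peirce-1 space of $e_{[m]}$ (as $\l\le m$ and $n^+\notin[m]$). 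The main obstacle I anticipate is step (3): getting the twist by $R_u$ to interact correctly with the Peirce projections so that exactly $v$ reappears in the $c$-term while the $R_u^{-*}v=w$ survives in the higher terms — this is where one must use the precise form $R_uv=\{uc^*v\}$ and the commutation of $R_u$ with the relevant $e_i\b e_i^*$, and it is easy to be off by which tripotent ($c$ versus $e_{[n]}$ versus $e_i$) sits inside each bracket.
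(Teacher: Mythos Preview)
Your plan is essentially the paper's approach: apply Proposition~\ref{J} with $y=w:=R_u^{-*}v$, then split $e_{[m]}=c+\sum_{i=\l+1}^m e_i$ and show the $c$-piece gives $\{cv^*z\}$ while the remaining pieces give the $e_i$-sum, and finally extend the sum from $m$ to $n$. Two points where the paper is sharper than your sketch:

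\emph{The $c$-term.} You say the $c$-part ``produces exactly $\{cv^*z\}$ after using $R_u^{-*}$ to undo the twist,'' and later in step~(3) you describe it as ``$\{e_{[\l]}e_{[\l]}^*w\}$-type pieces.'' The paper makes this one clean line: by Peirce rules only the $U$-block of $P_{[m]}z$ survives in $\{c(P_{[m]}z)^*y\}$, so $\{c(P_{[m]}z)^*y\}=\{cu^*y\}=R_u^*y=v$ (recall $R_u=\{uc^*\,\cdot\,\}$ so its adjoint is $\{cu^*\,\cdot\,\}$). That single identity is the heart of the computation, and it is worth stating it explicitly rather than leaving it as ``a relation of the form $\{c(\cdot)^*\cdot\}$ against $R_u$.'' With this in hand one gets directly $\{e_{[m]}(P_{[m]}z)^*y\}=v+\sum_{i=\l+1}^m\{e_i(P_{[n]}z)^*y\}$, and then the outer splitting $e_{[m]}\b v^*=c\b v^*$, $e_{[m]}\b\{e_i\ldots\}^*=e_i\b\{e_i\ldots\}^*$ follows from orthogonality of the $e_j$. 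Your reorganization via Cramer's rule (steps (1)--(4)) is unnecessary: Proposition~\ref{J} already packages Cramer's rule, and unpacking it again only obscures the argument.

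\emph{Extending the sum to $n$.} You invoke a ``trace/commutator argument as in Proposition~\ref{A}'' to kill the terms with $m<i\le n$. The paper's reason is simpler and purely Peirce-theoretic: for such $i$ one has $\{e_i\{e_i(P_{[n]}z)^*y\}^*z\}\in\sum_k E_{ik}\subset E_{[m]}^\perp$, so $N_m'(z)$ annihilates it since $N_m$ depends only on $P_{[m]}z$.
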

\begin{proof} Let $u=P_Uz=P_{[\l]}z$ and assume that $N_\l(z)=\lD_c(u)\ne 0.$ Then $y=R_u^{-*}v\in E_{[\l]n^+}.$ 
If $z_{ab}\in E_{ab}$ with $a,b\in[m]$ then $\{cz_{ab}^*y\}\ne 0$ implies $a,b\in[\l]$ since $a$ or $b$ cannot match $n^+.$ It follows that
$$\{c(P_{[m]}z)^*y\}=\{c(P_{[\l]}z)^*y\}=\{cu^*y\}=\{u^*c^*y\}=R_u^*y=v.$$ 
Now let $\l<i\le m.$ If $z_{ab}\in E_{ab},$ with $a,b\in[n]$ and $\{e_iz_{ab}^*y\}\ne 0,$ then both $a,b$ cannot match $n^+$ and hence one index $b\in[\l].$ Since $i>\l$ it follows that the other index $a=i.$ Thus both indices $a,b\in[m]$ and hence 
\be{39}\{e_i(P_{[n]}z)^*y\}=\{e_i(P_{[m]}z)^*y\}\in E_{in^+}.\ee
Therefore
\be{90}\{e_{[m]}(P_{[m]}z)^*y\}=\{c(P_{[m]}z)^*y\}+\S_{i=\l+1}^m\{e_i(P_{[m]}z)^*y\}
=v+\S_{i=\l+1}^m\{e_i(P_{[n]}z)^*y\}.\ee
If $\l<i\le m$ then $v\in E_{[\l]n^+}\ic E^{e_i}.$ Hence $e_i\b v^*=0$ and
\be{41}e_{[m]}\b v^*=e_{[\l]}\b v^*=c\b v^*.\ee
If $j\in[m]$ with $j\ne i$ then $\{e_i(P_{[n]}z)^*y\}\in E_{in^+}\ic E^{e_j}$ and hence 
$e_j\b\{e_i(P_{[n]}z)^*y\}^*=0.$ Therefore
\be{37}e_{[m]}\b\{e_i(P_{[n]}z)^*y\}^*=e_i\b\{e_i(P_{[n]}z)^*y\}^*.\ee
Applying Proposition \ref{J} together with \er{10},\er{41} and \er{37} we obtain 
$$(z|y)\ N_m(z)=N_m'(z)\{e_{[m]}\{e_{[m]}(P_{[m]}z)^*y\}^*z\}=N_m'(z)\{e_{[m]}(v+\S_{i=\l+1}^m\{e_i(P_{[n]}z)^*y\})^*z\}$$
$$=N_m'(z)\{e_{[m]}v^*z\}+\S_{i=\l+1}^m N_m'(z)\{e_{[m]}\{e_i(P_{[n]}z)^*y\}^*z\}$$
$$=N_m'(z)\{cv^*z\}+\S_{i=\l+1}^m N_m'(z)\{e_i\{e_i(P_{[n]}z)^*y\}^*z\}$$
$$=N_m'(z)\{cv^*z\}+\S_{i=\l+1}^n N_m'(z)\{e_i\{e_i(P_{[n]}z)^*y\}^*z\}.$$
In the last step we use that for $m<i\le n$ we have $\{e_i\{e_i(P_{[n]}z)^*y\}^*z\}\in\S_{k}E_{ik}\ic E_{[m]}^\perp.$ and hence 
$N_m'(z)\{e_i\{e_i(P_{[n]}z)^*y\}^*z\}=0.$
\end{proof}

The transformation $R_u$ on $U$ has determinant $\lD_c(u)^\ld=N_\l(z)^\ld,$ where $u=P_cz$ and
$$\ld:=\dim E_{in^+}=\begin{cases}a&n<r\\b&n=r\end{cases}.$$
Computing $R_u^{-1}$ with (the usual) Cramer's rule it follows that for fixed $v$ there exists a "cofactor polynomial" 
$\lQ^v_\lx(z)$ depending linearly on $\lx\in E$ such that
\be{49}\lQ^v_\lx(z)=N_\l(z)^\ld\ (\lx|R_u^{-*}v)\ee
for all $z$ with $N_\l(z)\ne 0$ (a dense open subset of $E$). We apply this construction to $\lx=z$ and to 
$\lx=\{v_i^\la e_i^*(P_{[n]}z)\},$ where $\l<i\le n$ and $v_i^\la$ is an orthonormal basis of $E_{in^+}.$ 

\begin{lemma}\label{L} The "cofactor polynomials" $\lQ^v_z(z)$ and $\lQ^v_{\{v_i^\la e_i^*(P_{[n]}z)\}}(z)$ vanish at 
$c=e_{[\l]}$ and hence belong to $\ML_c.$
\end{lemma}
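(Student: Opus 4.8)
\emph{Plan of proof.} The plan is to evaluate the defining identity for the cofactor polynomials at the single point $z=c=e_{[\l]}$, where all the relevant quantities degenerate. First I would record three elementary facts. Since $c\in U=E_{[\l]}$ we have $P_cc=P_{[\l]}c=c$; since $[\l]\ic[n]$ we likewise have $P_{[n]}c=c$; and therefore $N_\l(c)=\lD_{[\l]}(P_{[\l]}c)=\lD_{[\l]}(e_{[\l]})=1$ by the normalization of the Jordan determinant of $E_{[\l]}.$ In particular $N_\l(c)^\ld=1\ne0,$ so $c$ lies in the open set $\{z:N_\l(z)\ne0\}$ on which the identity $\lQ^v_\lx(z)=N_\l(z)^\ld\,(\lx|R_u^{-*}v),$ $u=P_cz,$ holds, and hence this identity may be used at $z=c$ directly without any limiting argument.

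The next step is to identify the operator $R_c.$ For $v\in E_{[\l]n^+}=\S_{i=1}^\l E_{in^+}$ the joint Peirce space $E_{in^+}$ lies in the Peirce $1$-space $E_c^1$ of $c=e_{[\l]},$ because exactly one of the indices $i,n^+$ belongs to $[\l]$; here one uses that $n^+\notin[\l],$ which holds since $n\ge\l+1$ (so $n^+=n+1\ge\l+2$ when $n<r,$ and $n^+=0$ when $n=r$). Consequently $R_cv=\{cc^*v\}=v,$ i.e. $R_c=\x{id}$ on $E_{[\l]n^+},$ so $R_c^{-*}=\x{id}$ and the identity above collapses to $\lQ^v_\lx(c)=(\lx|v)$ for every $\lx\in E.$

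It then remains to substitute the two choices of $\lx$ and evaluate at $z=c.$ Taking $\lx=z$ gives, at $z=c,$ the value $(c|v),$ which vanishes because $c\in E_c^2$ and $v\in E_c^1$ are orthogonal with respect to the $K$-invariant inner product. Taking $\lx=\{v_i^\la e_i^*(P_{[n]}z)\}$ with $\l<i\le n,$ and using $P_{[n]}c=c,$ gives at $z=c$ the value $(\{v_i^\la e_i^*c\}|v),$ so it suffices to show $\{v_i^\la e_i^*c\}=0$; writing $c=\S_{k=1}^\l e_k$ this amounts to $\{v_i^\la e_i^*e_k\}=0$ for each $k\le\l,$ which follows from the Peirce multiplication rules: with $v_i^\la\in E_{in^+},$ $e_i\in E_{ii},$ $e_k\in E_{kk}$ and $i>\l\ge k$ (so $i\ne k$ and $n^+\notin[\l]$), there is no admissible matching of the indices and the triple product vanishes. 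Hence the second cofactor polynomial also takes the value $(0|v)=0$ at $z=c,$ and both polynomials therefore lie in $\ML_c.$

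The only point that will require genuine care is this last step: checking, through the Peirce multiplication rules, that $\{v_i^\la e_i^*e_k\}$ really vanishes for every $k\le\l,$ together with the index bookkeeping ($n^+\notin[\l]$) that forces $E_{in^+}\ic E_c^1$ and hence $R_c=\x{id}.$ Everything else is a direct substitution into the already-established formula for $\lQ^v_\lx,$ evaluated at the point $c$ where $N_\l$ does not vanish.
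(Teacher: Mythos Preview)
Your proposal is correct and follows essentially the same argument as the paper: evaluate the defining identity \er{49} at $z=c$, using $N_\l(c)=1$ and $R_c=\x{id}$ to reduce $\lQ^v_\lx(c)$ to $(\lx|v)$, then observe that $(c|v)=0$ by Peirce orthogonality and that $\{v_i^\la e_i^*c\}=0$ for $i>\l$ by the Peirce multiplication rules. Your write-up is simply more explicit about the index bookkeeping (why $n^+\notin[\l]$, why $E_{in^+}\ic E_c^1$) than the paper's terse version.
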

\begin{proof} Since $N_\l(c)=1$ and $R_c=\x{id}$ \er{49} implies 
$$\lQ^v_\lx(c)=N_\l(c)^\ld\ (\lx|R_c^{-*}v)=(\lx|v)$$
for $\lx$ evaluated at $z=c.$ For $\lx=z$ evaluating at $z=c$ yields $(c|v)=0$ since $v\in E_{[\l]n^+}\ic U^\perp.$ For 
$\lx=\{v_i^\la e_i^*(P_{[n]}z)\},$ with $\l<i\le n,$ evaluating at $z=c$ yields $\lx=\{v_i^\la e_i^*(P_{[n]}c)\}
=\{v_i^\la e_i^*c\}=0$ since $i>\l.$
\end{proof}

\begin{lemma}\label{I} If $Y\in U\b V^*$ then $Yc=0.$
\end{lemma}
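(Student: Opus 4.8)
The plan is to read this identity off directly from the Peirce multiplication rules, the only preparatory observation being that $c$ lies in its own Peirce $2$-space. Indeed, the defining relation $\{cc^*c\}=2c$ says precisely that $c$ is a $2$-eigenvector of $c\b c^*,$ i.e. $c\in E_c^2=U.$

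First I would reduce to an elementary operator by linearity: write $Y$ as a finite sum of terms $u\b v^*$ with $u\in U=E_c^2$ and $v\in V=E_c^1,$ so that $Yc$ is the corresponding sum of triple products $\{uv^*c\}.$ Then I would invoke the Peirce multiplication rule for the single tripotent $c,$
$$\{E_c^i(E_c^j)^*E_c^k\}\ic E_c^{i-j+k},$$
valid with the convention $E_c^m=\{0\}$ for $m\notin\{0,1,2\}.$ Applying it with $i=2$ (for $u$), $j=1$ (for $v$) and $k=2$ (for $c$) gives $\{uv^*c\}\in E_c^{3}=\{0\},$ hence $Yc=0.$

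There is essentially no obstacle here; the only point that needs care is the weight bookkeeping, namely keeping $c$ and $u$ in the $2$-space and $v$ in the $1$-space. If one prefers to argue inside the joint Peirce decomposition set up earlier, one can instead expand $c=e_{[\l]}$ and $\{uv^*c\}=\S_{m\in[\l]}\{uv^*e_m\}$ and note that, since $u$ lies in a block $E_{ab}$ with $a,b\in[\l],$ the factor $v$ lies in a block $E_{pq}$ with exactly one index outside $[\l],$ and $e_m\in E_{mm}$ with $m\in[\l],$ no index matching is possible and every summand vanishes by the Peirce rules. I would present the one-line version via the single-tripotent Peirce decomposition.
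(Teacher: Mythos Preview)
Your proof is correct and is essentially the paper's own argument: the paper also reduces to $Y=u\b v^*$ and invokes the Peirce multiplication rules to conclude $Yc=\{uv^*c\}\in\{UV^*U\}=\{0\},$ which is exactly your computation $\{E_c^2(E_c^1)^*E_c^2\}\ic E_c^3=\{0\}$ written in the notation $U=E_c^2,\ V=E_c^1.$
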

\begin{proof} Let $Y=u\b v^*$ with $u\in U$ and $v\in V.$ The Peirce multiplication rules imply 
$Yc=\{uv^*c\}\in\{UV^*U\}=\{0\}.$
\end{proof}

\begin{proposition}\label{P} Let $Y\in U\b V^*.$ Then $Y^\dl N^\ll\in\ML_c J^\ll.$
\end{proposition}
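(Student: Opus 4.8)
The plan is to prove $Y^\dl N^\ll\in\ML_cJ^\ll$ by expanding the vector-field action over the factors of $N^\ll$ and feeding each factor into the Cramer's-rule machinery of Proposition \ref{M}. By bilinearity it suffices to take $Y=u\b v^*$ with $u\in U$ and $v\in V=E_c^1$; decomposing $V=\S_{n=\l}^r E_{[\l]n^+}$ into its joint Peirce pieces I may assume $v\in E_{[\l]n^+}$ for one fixed $n$ with $\l\le n\le r$. Conjugating $Y$ by a suitable element of $\h K_U=\<\exp(U\b U^*)\>$ moves $u$ over the dense set of maximal-rank elements of $U$ while replacing $N^\ll$ by a conical polynomial attached to a frame still aligned with $c$, so it is enough to treat $Y=c\b v^*$. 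I note at the outset that $Yc=0$ by Lemma \ref{I}, whence $Y^\dl g(c)=g'(c)(Yc)=0$ for every polynomial $g$; in particular $Y^\dl N^\ll$ already lies in $\ML_c$, and the real content is the sharpening to $\ML_cJ^\ll$.

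Next I write $N^\ll=\P_{m=1}^r N_m^{k_m}$ with $k_m:=\ll_m-\ll_{m+1}$ and use that $Y^\dl$ is a derivation of $\PL_E$:
$$Y^\dl N^\ll=\S_{m=1}^r k_m\,(Y^\dl N_m)\,\f{N^\ll}{N_m},\qquad (Y^\dl N_m)(z)=N_m'(z)\{cv^*z\}=\lD_{[m]}'(P_{[m]}z)\,P_{[m]}\{cv^*z\},$$
the $m$-th summand being understood as $0$ when $k_m=0$. The Peirce multiplication rules give $N_m'(z)\{cv^*z\}=0$ for $m>n$ (this is the vanishing used at the end of the proof of Proposition \ref{M}), so only $m\le n$ occurs. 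For the minors with $\l<m\le n$ I substitute the identity of Proposition \ref{M}; collecting the resulting terms and using $N_m\cdot(N^\ll/N_m)=N^\ll$ together with $\S_{m=\l+1}^n k_m=\ll_{\l+1}-\ll_{n+1}$, the leading contributions combine into $(\ll_{\l+1}-\ll_{n+1})\,(z\,|\,R_u^{-*}v)\,N^\ll$, and the correction contributions into sums, over $\l+1\le i\le n$, of terms $N_m'(z)\{e_i\{e_i(P_{[n]}z)^*(R_u^{-*}v)\}^*z\}\,(N^\ll/N_m)$. I then multiply through by the common denominator $N_\l(z)^\ld$, $\ld=\dim E_{in^+}$ (so that $N_\l(z)^\ld=\det R_u$): the coefficient of $N^\ll$ becomes precisely the cofactor polynomial $\lQ^v_z(z)=N_\l(z)^\ld(z\,|\,R_u^{-*}v)$, and the coefficients coming from the correction terms become the cofactor polynomials $\lQ^v_{\{v_i^\la e_i^*(P_{[n]}z)\}}(z)$; by Lemma \ref{L} each of these vanishes at $c$, i.e. lies in $\ML_c$. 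Combining this with the $m\le\l$ part of the Leibniz sum — whose summands carry the factor $Y^\dl N_m$, itself in $\ML_c$ — I would identify $N_\l(z)^\ld\cdot Y^\dl N^\ll$ with an element of $\ML_c$ times a polynomial which, by the Peter--Weyl description of $J^\ll$ in Theorem \ref{i}, lies in $J^\ll$. Since $N_\l(c)=1$ and $N_\l$ is a non-zerodivisor in $\PL_E$, dividing by $N_\l(z)^\ld$ gives $Y^\dl N^\ll\in\ML_cJ^\ll$.

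The hard part is the bookkeeping in the previous paragraph. The individual Leibniz summands do not separately lie in $J^\ll$ — only the total sum does — so one cannot argue term by term; the proof must carry the powers of $N_\l$ coherently across the three families of minors ($m\le\l$, $\l<m\le n$, $m>n$) and their attached correction terms, and extract a single factor from $\ML_c$ by means of the cofactor polynomials of Lemma \ref{L} while leaving behind a genuine element of the ideal $J^\ll$. The nested-bracket correction terms $\{e_i\{e_i(P_{[n]}z)^*y\}^*z\}$ produced by Proposition \ref{M} are the most delicate — this is exactly why the second family $\lQ^v_{\{v_i^\la e_i^*(P_{[n]}z)\}}$ of cofactor polynomials was isolated in Lemma \ref{L} — and the initial reduction to $u=c$ also needs care, since neither $\h K_U$ nor $K_U$ fixes the tripotent $c$.
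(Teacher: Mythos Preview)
Your outline assembles the right ingredients (Proposition \ref{M}, the cofactor polynomials of Lemma \ref{L}, the ``$N_\l(c)=1$'' division trick), but the part you yourself flag as ``hard bookkeeping'' is a genuine gap, and the paper resolves it by an organizational device you are missing. If you Leibniz-expand $Y^\dl N^\ll$ over \emph{all} minors $N_m$, the summands carry factors $N^\ll/N_m$ which for $m\le\l$ (and likewise for the correction terms coming from Proposition \ref{M}) have signature $<\ll$ and hence do not lie in $J^\ll$; you cannot then pair an $\ML_c$-factor with a $J^\ll$-factor term by term, and there is no evident way to recombine them.

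The paper's cure is to factor $N^\ll=N^{\ll'}N^{\h\ll^*}$ with $\ll'=(\ll_1-\ll_{\l+1},\ldots,\ll_\l-\ll_{\l+1},0^{(r-\l)})$ and $\h\ll^*=(\ll_{\l+1}^{(\l)},\ll_{\l+1},\ldots,\ll_r)$, and to work with $Y^\dl N^{\h\ll^*}$, whose Leibniz expansion involves only $N_m$ with $m>\l$. After applying Proposition \ref{M}, the correction terms reassemble as $(e_i\b v_i^{\la*})^\dl N^{\h\ll^*}$, and the key algebraic identity
\[
N^{2\ll'}(A^\dl N^{\h\ll^*})=N^{\ll'}(A^\dl N^\ll)-(A^\dl N^{\ll'})N^\ll\in J^\ll\qquad(A\in\h\kL)
\]
shows that multiplying by $N^{2\ll'}$ forces every such term into $J^\ll$. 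One obtains $N_\l^\ld\,N^{2\ll'}(Y^\dl N^{\h\ll^*})\in\ML_cJ^\ll$, then divides by $N_\l^\ld$ (since $N_\l(c)=1$ and the expression already lies in $J^\ll$). Finally
\[
N^{\ll'}(Y^\dl N^\ll)=(Y^\dl N^{\ll'})\,N^\ll+N^{2\ll'}(Y^\dl N^{\h\ll^*})\in\ML_cJ^\ll,
\]
using $Y^\dl N^{\ll'}\in\ML_c$ (from $Yc=0$), and one divides once more by $N^{\ll'}$ using $N^{\ll'}(c)=1$. This two-stage multiply--divide (by $N_\l^\ld$, then by $N^{\ll'}$) is exactly the coherent bookkeeping you were looking for, and it hinges on the splitting $N^\ll=N^{\ll'}N^{\h\ll^*}$.

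A smaller point: your reduction to $Y=c\b v^*$ via conjugation by $\h K_U$ is shaky, since $\h K_U$ does not fix $c$ and hence does not preserve $\ML_c$. The paper obtains this reduction directly from the second identity of Lemma \ref{z}: for $u\in E_{pj}$ and $v\in E_{jn^+}$ with $p,j\in[\l]$ one has $u\b v^*=e_p\b\{vu^*e_p\}^*$, and then $e_p\b w^*=c\b w^*$ for $w\in E_{pn^+}$ since $e_{p'}\b w^*=0$ whenever $p'\in[\l]\setminus\{p\}$.
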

\begin{proof} By Lemma \ref{z} we may assume that $Y=c\b v^*$ for some $v\in E_{[\l]n^+}$ with $\l\le n\le r.$ Multiplying the identity \er{38} by $N_\l(z)^\ld$ and applying the definition of $\lQ^v_z(z)$ and $\lQ^v_{\{v_i^\la e_i^*(P_{[n]}z)\}}(z)$ one obtains 
\be{44}N_\l(z)^\ld\ (c\b v^*)^\dl N_m
=\lQ^v_z(z)\ N_m(z)-\S_{i=\l+1}^n\S_\la\lQ^v_{\{v_i^\la e_i^*(P_{[n]}z)\}}(z)\ (e_i\b v_i^{\la*})^\dl N_m\ee
whenever $\l<m\le n.$ In case $n<r$ we have $(c\b v^*)^\dl N_m=0$ if $\l<n<m\le r$ since $n^+\le m$ and hence 
$c,v\in E_{[m]}$ with $(c|v)=0$ (for $n=r$ the condition is empty). Therefore \er{44} implies
$$N_\l^\ld\ \f{(c\b v^*)^\dl N^{\h\ll^*}}{N^{\h\ll^*}}
=N_\l^\ld\ \S_{m=\l+1}^r(\ll_m-\ll_{m+1})\f{(c\b v^*)^\dl N_m}{N_m}
=N_\l^\ld\ \S_{m=\l+1}^n(\ll_m-\ll_{m+1})\f{(c\b v^*)^\dl N_m}{N_m}$$
$$=\S_{m=\l+1}^n(\ll_m-\ll_{m+1})\(\lQ^v_z
-\S_{i=\l+1}^n\S_\la\lQ^v_{\{v_i^\la e_i^*(P_{[n]}z)\}}\f{(e_i\b v_i^{\la*})^\dl N_m}{N_m}\)$$
$$=C\.\lQ^v_z-\S_{i=\l+1}^n\S_\la\lQ^v_{\{v_i^\la e_i^*(P_{[n]}z)\}}\S_{m=\l+1}^n(\ll_m-\ll_{m+1})\f{(e_i\b v_i^{\la*})^\dl N_m}{N_m}.$$
Here the constant 
$$C=\S_{m=\l+1}^n(\ll_m-\ll_{m+1})=\begin{cases}\ll_{\l+1}-\ll_{n^+}&n<r\\\ll_{\l+1}&n=r\end{cases}.$$ 
On the other hand, if $\l<i\le n$ then
$$\f{(e_i\b v_i^{\la*})^\dl N^{\h\ll^*}}{N^{\h\ll^*}}=\S_{m=\l+1}^r(\ll_m-\ll_{m+1})\f{(e_i\b v_i^{\la*})^\dl N_m}{N_m}
=\S_{m=\l+1}^n(\ll_m-\ll_{m+1})\f{(e_i\b v_i^{\la*})^\dl N_m}{N_m}$$
since $(e_i\b v_i^{\la*})^\dl N_m=0$ if $i\le n<m.$ It follows that
$$N_\l(z)^\ld\ (c\b v^*)^\dl N^{\h\ll^*}=C\lQ^v_z(z)\ N^{\h\ll^*}
-\S_{i=\l+1}^n\S_\la\lQ^v_{\{v_i^\la e_i^*(P_{[n]}z)\}}(e_i\b v_i^{\la*})^\dl N^{\h\ll^*}$$
and therefore
\be{46}N_\l^\ld N^{2\ll'}(Y^\dl N^{\h\ll^*})=N^{2\ll'}\(C\lQ^v_z\ N^{\h\ll^*}-\S_{i=\l+1}^n
\S_\la\lQ^v_{\{v_i^\la e_i^*(P_{[n]}z)\}}(e_i\b v_i^{\la*})^\dl N^{\h\ll^*}\)$$
$$=C\.N^{\ll'}\lQ^v_z\ N^\ll-\S_{i=\l+1}^n\S_\la\lQ^v_{\{v_i^\la e_i^*(P_{[n]}z)\}}N^{2\ll'}(e_i\b v_i^{\la*})^\dl N^{\h\ll^*}.\ee
For any $A\in\h\kL$ we have $A^\dl N^\ll=A^\dl(N^{\ll'}N^{\h\ll^*})=(A^\dl N^{\ll'})N^{\h\ll^*}+N^{\ll'}(A^\dl N^{\h\ll^*})$
and therefore
$$N^{2\ll'}(A^\dl N^{\h\ll^*})=N^{\ll'}(A^\dl N^\ll)-(A^\dl N^{\ll'})N^\ll\in J^\ll.$$ 
This implies 
$$N^{2\ll'}(e_i\b v_i^{\la*})^\dl N^{\h\ll^*}\in J^\ll$$ 
and \er{46} and Lemma \ref{L} imply $N_\l^\ld N^{2\ll'}(Y^\dl N^{\h\ll^*})\in\ML_c J^\ll.$ Since $N_\l(c)=1$ it follows that
$$N^{2\ll'}(Y^\dl N^{\h\ll^*})\in\ML_c J^\ll.$$
We have $Y^\dl N^{\ll'}\in\ML_c$ since $Yc=0$ by Lemma \ref{I}. Therefore the identity
$$N^{\ll'}(Y^\dl N^\ll)=N^{\ll'}\((Y^\dl N^{\ll'})N^{\h\ll^*}+N^{\ll'}(Y^\dl N^{\h\ll^*})\)
=(Y^\dl N^{\ll'})N^\ll+N^{2\ll'}(Y^\dl N^{\h\ll^*})$$
shows that $N^{\ll'}(Y^\dl N^\ll)\in\ML_c J^\ll.$ Since $N^{\ll'}(c)=1$ this completes the proof.
\end{proof}

{\bf Step 4} constructs an equivariant cross-section to the (surjective) map \er{23}. We need a "compression formula" for Jordan determinants which in the matrix case amounts to the well-known relation
$$\det\bb u{v_0}{v^0}w=\det(u)\det(w-v^0u^{-1}v_0)$$
for block-matrices, with the $(\l\xx\l)$-submatrix $u$ invertible. If $E$ is of tube type (i.e. $r=s$ in the matrix case) its Jordan algebra determinant $N(z)$ satisfies
\be{27}N(B(x,y)z)=\lD(x,y)^2\ N(z).\ee
This follows from the fact that $B(x,y)$ belongs to $\h K$ whenever $\lD(x,y)\ne 0.$ Using the decomposition \er{25} define the open dense subset
$$\lO:=\{z=u+v+w\in E:\ u\in U\ \mbox{invertible}\}$$
and a rational map $\lo:\lO\to W$ by
$$\lo(z):=w-Q_vu^{-*}=w-\f12\{vu^{-1}v\}.$$
Here $u^{-1}$ denotes the inverse of $u\in U$ and $u^{-*}\in U$ denotes the inverse of $u^*$ for the involution 
$u^*=Q_cu$ on $U.$ Consider the subgroup $\h K_W:=\ <\exp W\b W^*>$ of $\h K.$ In the matrix case \er{26} $\h K_W$ consists of the transformations
$$k\bb u{v_0}{v^0}w=\bb 100d\bb u{v_0}{v^0}w\bb100{\ld}=\bb u{v_0\ld}{dv^0}{dw\ld}.$$ 
Any $k\in\h K_W$ satisfies $k(u+v+w)=u+kv+kw,$ with $kv\in V$ and $kw\in W.$ Therefore $\h K_W$ leaves $\lO$ invariant and
$$\lo(kz)=kw-Q_{kv}u^{-*}=k(w-Q_v u^{-*})=k|_W\lo(z).$$

\begin{lemma}\label{S} Let $z=u+v+w\in E$ with $u$ invertible. Then we have the "compression formula"
\be{29}N_m(z)=\lD_c(u)\ N_{m-\l}^W(\lo(z))\ee
for $\l<m\le r.$ Here 
$$N_{m-\l}^W=N_{e_{[m]}-c}^W=N_{e_{\l+1}+\;+e_m}^W$$ 
denotes the $(m-\l)$-th minor relative to $W.$
\end{lemma}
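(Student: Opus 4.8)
The plan is to reduce the statement to the case $m=r$ with $E$ of tube type, and there to produce a single explicit element of the structure group $\h K$ that carries $z$ to $u+\lo(z)$ while preserving the Jordan determinant. For the reduction, note that $N_m(z)=\lD_{[m]}(P_{[m]}z)$ depends only on the Peirce-$2$ component of $z$, which lies in the tube-type Jordan subalgebra $E_{[m]}=E_{e_{[m]}}$; inside $E_{[m]}$ the element $c=e_{[\l]}$ is an idempotent of rank $\l<m$ with Peirce-$2$ space $U$, whose determinant is $\lD_c$, and Peirce-$0$ space $E_{[m]}\ui W=E_{e_{[m]}-c}$, whose determinant is $N_{m-\l}^W$. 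One has to check only that the rational map $\lo$ restricts compatibly: by the Peirce multiplication rules every term of $Q_vu^{-*}$ containing a joint Peirce block with an index outside $[m]$ cannot be matched against $u^{-*}\in U$ and hence vanishes after projecting to $E_{[m]}\ui W$. Thus we may assume $E$ is a Jordan algebra with unit $e$ and determinant $N=N_r$, $c=e_{[\l]}$ an idempotent of rank $\l$, and $U=E_c^2,\ V=E_c^1,\ W=E_c^0$.

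I would first record the elementary factorization $N(u+w)=N_U(u)\,N_W(w)$ for $u\in U,\ w\in W$: here $U\op W$ is a Jordan subalgebra of $E$ with unit $e=c+(e-c)$, equal to the direct sum of the ideals $U$ (unit $c$) and $W$ (unit $e-c$), and the generic norm of such a subalgebra is the restriction of $N$ and factors over the direct sum, cf.\ \cite{FK2,N}. Now fix $z=u+v+w$ with $u$ invertible in $U$. Then $R_u\colon V\to V$, $R_ux:=\{uc^*x\}$, is invertible (it preserves the $c$-Peirce grading and acts on $V$ as right multiplication by $u$ in the $c$-homotope), so there is a unique $a\in V$ with $\{uc^*a\}=v$. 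Put $g:=B(a,c)=I-(a\b c^*)+Q_aQ_c$ as in \er{43}. By the Peirce rules $a\b c^*$ sends $E_c^2\to E_c^1\to E_c^0\to 0$ and $Q_aQ_c$ sends $E_c^2$ into $E_c^0$ and kills $E_c^1\op E_c^0$, so $g=I+N_0$ with $N_0$ nilpotent; hence $g$ is invertible, $\lD(a,c)\ne 0$ and $g\in\h K$. Being unipotent, $g$ lies in the kernel of every character of $\h K$, in particular of the semi-invariance character $h\qi N(he)$ of \er{18} extended to $\h K$, so $N(gy)=N(y)$ for all $y\in E$.

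Next, using $\{ac^*u\}=v$, $\{ac^*w\}=0$, $\{ac^*v\}\in W$ and $Q_aQ_cz=Q_a(Q_cu)\in W$, one reads off from \er{43} that
$$gz=u+\big(w-\{ac^*v\}+Q_aQ_cz\big),$$
i.e.\ the $U$-component of $z$ is unchanged and its $V$-component is annihilated. Substituting $v=\{uc^*a\}$ and $Q_cz=Q_cu$ and simplifying by the Jordan triple identity \er{6} and the $c$-Peirce relations — this is the coordinate-free incarnation of the block-matrix Schur-complement identity recalled above — gives $\{ac^*v\}-Q_aQ_cz=Q_vu^{-*}$, i.e.\ $gz=u+\lo(z)$. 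Combining the three facts just established,
$$N_r(z)=N(z)=N(gz)=N\big(u+\lo(z)\big)=N_U(u)\,N_W(\lo(z))=\lD_c(u)\,N_{r-\l}^W(\lo(z)),$$
which, together with the reduction to $m=r$, is \er{29}.

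The main obstacle is the Schur-complement identity $\{ac^*v\}-Q_aQ_cz=Q_vu^{-*}$ in the last step: transparent in matrix coordinates, it requires in the general Jordan setting a careful manipulation of the quadratic operators $Q$ and of the $c$-Peirce relations. The remaining ingredients — the reduction step, the block factorization, and the unipotence of $B(a,c)$ — are routine.
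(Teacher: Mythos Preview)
Your outline is correct and would yield a valid proof, but the paper bypasses precisely the step you flag as ``the main obstacle'' by a different choice of Bergman operator.

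Like you, the paper works inside the tube-type subalgebra $E_{[m]}$ and produces an element of the structure group that sends $z$ to $u+\lo(z)$; the block factorization $N_m(u+\lo(z))=\lD_c(u)\,N^W_{m-\l}(\lo(z))$ and the unipotence argument are common to both proofs. The difference is that the paper takes $g=B(v,u^{-*})$ rather than your $g=B(a,c)$ with $a=R_u^{-1}v$. With this choice the Peirce rules give $\{v(u^{-*})^*w\}=0$ and $Q_{u^{-*}}(v+w)=0$, while the two remaining facts $\{v(u^{-*})^*u\}=v$ and $Q_{u^{-*}}u=u^{-*}$ are standard properties of Jordan inverses in $U$ (the second is just $P_{x^{-1}}x=x^{-1}$). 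One then reads off
\[
B(v,u^{-*})(u+v+w)=u+(w-Q_vu^{-*})=u+\lo(z)
\]
directly from the definition of $\lo$. The Schur-complement identity $\{ac^*v\}-Q_aQ_cu=Q_vu^{-*}$ that your choice of $g$ forces you to establish simply never appears. For the determinant, the paper invokes \er{27} in the form $N_m(B(v,u^{-*})z)=\lD(v,u^{-*})^2\,N_m(z)$ together with $\lD(v,u^{-*})=1$, which is your unipotence observation in another guise.

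So your route is genuine but longer: the missing identity is true (indeed, it is forced by the fact that both $B(a,c)$ and $B(v,u^{-*})$ are unipotent, kill the $V$-component of $z$, and fix its $U$-component, hence must agree on $z$), but the paper's choice of Bergman operator makes proving it unnecessary.
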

\begin{proof} The Peirce multiplication rules imply $Q_vu^{-*}\in W.$ Therefore
$$B(v,u^{-*})(u+v+w)=u+(w-Q_vu^{-*})=u+\lo(z)$$
has no $V$-component. Moreover, $\lD(v,u^{-*})=1.$ Applying \er{27} to the unital $J^*$-triple $E_{[m]}$ we obtain
$$N_m(z)=N_m(u+v+w)=\lD(v,u^{-*})^2\ N_m(u+v+w)$$
$$=N_m(B(v,u^{-*})(u+v+w))=N_m(u+\lo(z))=\lD_c(u)\ N_{m-\l}^W(\lo(z)).$$ 
\end{proof}

For $\la=(\la_{\l+1},\;,\la_r)\in\Nl_+^{r-\l}$ put 
$$\h\la:=(\la_{\l+1}^{(\l)},\la_{\l+1},\;,\la_r)\in\Nl_+^r$$ 
and denote by
$$N_W^\la=\P_{m=\l+1}^r(N_{m-\l}^W)^{\la_m-\la_{m+1}}$$
the conical polynomial on $W$ relative to $\la.$ If $u=P_Uz$ is invertible, then \er{29} implies
$$N^{\h\la}(z)=\P_{m=\l+1}^r N_m^{\la_m-\la_{m+1}}(z)$$
$$=\P_{m=\l+1}^r\lD_c(u)^{\la_m-\la_{m+1}}(N_{m-\l}^W(\lo(z)))^{\la_m-\la_{m+1}}=\lD_c(u)^{\la_{\l+1}}\ N_W^\la(\lo(z)).$$

\begin{corollary}\label{E} Let $z=u+v+w\in\lO$ and $Z_1,\;,Z_r\in W\b W^*.$ Then
$$(Z_1^\dl\:Z_r^\dl N^{\h\la})(z)=\lD_c(u)^{\la_{\l+1}}(Z_1|_W^\dl\:Z_r|_W^\dl N_W^\la)(\lo(z)).$$
\end{corollary}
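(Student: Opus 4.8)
The plan is to differentiate the factorization $N^{\h\la}(z)=\lD_c(P_U z)^{\la_{\l+1}}\,N_W^\la(\lo(z))$ on $\lO$ proved just above, using two facts: that every vector field $Z^\dl$ with $Z\in W\b W^*$ annihilates the determinant factor $\lD_c(P_U z)^{\la_{\l+1}}$, and that on the composite $N_W^\la\oc\lo$ it acts through the chain rule and the $\h K_W$-equivariance of $\lo$ already recorded in the text. Since $\lo$ is holomorphic on the dense open set $\lO$, it is enough to prove the asserted identity there.

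First I would check that $P_U Zz=0$ for all $Z\in W\b W^*$ and $z\in E$: writing $z=u+v+w$ as in \er{25}, the Peirce multiplication rules \cite{L} show that $\{WW^*E\}$ has no $E_c^2$-component (in the joint Peirce decomposition a product $\{E_{ab}E_{cd}^*E_{ef}\}$ with $a,b,c,d\notin[\l]$ lands in some $E_{gh}$ with $g\notin[\l]$, hence in $V\op W$). Arguing as in Lemma \ref{F}, it follows that $Z^\dl(\lD_c\oc P_U)(z)=\lD_c'(P_U z)\,P_U Zz=0$, so $Z^\dl$ also annihilates $\lD_c(P_U z)^{\la_{\l+1}}$. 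By the Leibniz rule, applied once for each of $Z_1,\;,Z_r$, this yields
$$Z_1^\dl\:Z_r^\dl\(\lD_c(P_U z)^{\la_{\l+1}}g\)=\lD_c(P_U z)^{\la_{\l+1}}\(Z_1^\dl\:Z_r^\dl g\)$$
for any $g$ holomorphic on $\lO$, because every term in which some $Z_i^\dl$ differentiates the determinant factor vanishes.

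Next I would take $g=N_W^\la\oc\lo$. For $Z\in W\b W^*$ one has $\exp(tZ)\in\h K_W$, so differentiating the equivariance relation $\lo(\exp(tZ)z)=\exp(tZ)|_W\,\lo(z)$ at $t=0$ gives $\lo'(z)(Zz)=(Z|_W)\,\lo(z)$; combined with the chain rule this is $Z^\dl(h\oc\lo)=(Z|_W^\dl h)\oc\lo$ for every $h\in\PL_W$, and iterating over $Z_1,\;,Z_r$ produces $Z_1^\dl\:Z_r^\dl(N_W^\la\oc\lo)=(Z_1|_W^\dl\:Z_r|_W^\dl N_W^\la)\oc\lo$. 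Combining this with the previous display and the factorization of $N^{\h\la}$ gives the corollary. The only point requiring care is bookkeeping: $\lo$ is merely rational, so all the identities live on $\lO$, where $\lo$ and $N_W^\la\oc\lo$ are holomorphic; and the statement $P_U\{WW^*E\}=0$ must be checked against the joint Peirce relations. I expect that Peirce-index bookkeeping to be the main (and essentially the only) obstacle — everything else is the chain rule together with the equivariance of $\lo$ already at hand.
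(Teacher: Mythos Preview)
Your argument is correct and is essentially the paper's own proof: the paper differentiates the identity $N^{\h\la}(k_t z)=\lD_c(u)^{\la_{\l+1}}\,N_W^\la(k_t|_W\lo(z))$ along the one-parameter group $k_t=\exp(tZ)\in\h K_W$ and then iterates, which amounts exactly to your two ingredients (that $P_U Zz=0$ kills the determinant factor, and that the equivariance $\lo(k_t z)=k_t|_W\lo(z)$ gives $Z^\dl(h\oc\lo)=(Z|_W^\dl h)\oc\lo$). Your version just makes the Leibniz/chain-rule bookkeeping explicit, whereas the paper packages it as differentiation of the group action; the Peirce check $P_U\{WW^*E\}=0$ is the infinitesimal form of the paper's observation that $k|_U=\mathrm{id}$ for $k\in\h K_W$.
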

\begin{proof} Each $k\in\h K_W$ leaves $\lO$ invariant and satisfies $k|_U=\x{id}.$ Therefore Lemma \ref{S} implies
$$N^{\h\la}(kz)=\lD_c(u)^{\la_{\l+1}}\ N_W^\la(\lo(kz))=\lD_c(u)^{\la_{\l+1}}\ N_W^\la(k|_W\lo(z)).$$
Taking a 1-parameter group $k_t=\exp(tZ)$ with $Z\in W\b W^*$ this implies
$$(Z^\dl N^{\h\la})(z)=\f{d}{dt}|_{t=0}N^{\h\la}(k_tz)=\lD_c(u)^{\la_{\l+1}}\ \f{d}{dt}|_{t=0}N_W^\la(k_t|_W\lo(z))
=\lD_c(u)^{\la_{\l+1}}\ (Z|_W^\dl N_W^\la(\lo(z)).$$
Iterating this relationship, the assertion follows.
\end{proof}

\begin{proposition}\label{U} For each $\ll\in\Nl_+^r$ there is a unique linear map $\Lt_\ll:\PL_W^{\ll^*}\to\PL_E^\ll$ with
$$\Lt_\ll N_W^{\ll^*}=N^\ll,$$
which for all $\lf\in\PL_W^{\ll^*}$ has the cross-section property
\be{30}\lp_c^{\ll^*}(\Lt_\ll\lf)=\lf\ee
and the invariance property
\be{28}(\Lt_\ll\lf)\oc k=\Lt_\ll(\lf\oc k|_W)\ee
under $k\in\h K_W.$ 
\end{proposition}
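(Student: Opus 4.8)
The plan is to build $\Lt_\ll$ by leveraging the multiplicity-free structure: since $\PL_E^\ll$ is an irreducible $K$-module and, more importantly, since $N^\ll$ is the highest weight vector while $N_W^{\ll^*}$ is the corresponding highest weight vector for the $K_W$-action, I would first observe that $\PL_W^{\ll^*}$ is irreducible under $\h K_W$ and spanned by the $\h K_W$-translates $N_W^{\ll^*}\oc k|_W$. So I would \emph{attempt to define} $\Lt_\ll$ on this spanning set by
$$\Lt_\ll(N_W^{\ll^*}\oc k|_W):=N^\ll\oc k,\qquad k\in\h K_W,$$
and then show this is well-defined, i.e. that whenever a linear combination $\S_i c_i\,(N_W^{\ll^*}\oc k_i|_W)$ vanishes in $\PL_W^{\ll^*}$, the corresponding combination $\S_i c_i\,(N^\ll\oc k_i)$ vanishes in $\PL_E^\ll$. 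Once well-definedness is established, linearity, the formula $\Lt_\ll N_W^{\ll^*}=N^\ll$, and the equivariance \er{28} are all immediate from the construction, and uniqueness follows because $\{N_W^{\ll^*}\oc k|_W\}$ spans the domain.

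The key to well-definedness is the compression/restriction mechanism from Corollary \ref{E} together with Proposition \ref{D}. I would argue as follows. By Proposition \ref{D}, every element of $\PL_E^\ll$ is a linear combination of terms $Y_1^\dl\cdots Z_r^\dl N^\ll$ with $Y_i\in U\b V^*$, $X_j\in U\b_0 U^*$, $Z_k\in W\b_0 W^*$. Modulo $\ML_c J^\ll$, Propositions \ref{H} and \ref{P} kill the $X$- and $Y$-factors, so that the class of $f\in\PL_E^\ll$ in $\y J_c^\ll$ is represented by a combination of pure $Z$-terms $Z_1^\dl\cdots Z_r^\dl N^\ll$. Now apply $\lp_c^{\ll^*}$: since $N^\ll=N^{\ll'}N^{\h\ll^*}$ with $N^{\ll'}|_W$ a nonzero constant and $\h\ll^*$ of the special form to which Corollary \ref{E} applies, one gets that $\lp_c^{\ll^*}(Z_1^\dl\cdots Z_r^\dl N^\ll)$ is (a constant multiple of) $Z_1|_W^\dl\cdots Z_r|_W^\dl N_W^{\ll^*}$, i.e. exactly the image under the corresponding iterated vector field acting on $N_W^{\ll^*}$ inside $\PL_W^{\ll^*}$. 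In other words, the map $\PL_E^\ll\to\PL_W^{\ll^*}$ given by $f\qi\lp_c^{\ll^*}f$ intertwines the $\AL$-module structure (modulo annihilation of the $X,Y$ directions) with the $W\b W^*$-action on $\PL_W^{\ll^*}$, and in particular sends $N^\ll\oc k\mapsto(\text{const})\,N_W^{\ll^*}\oc k|_W$ for $k\in\h K_W$. This last fact is precisely what is needed: a vanishing combination $\S_i c_i N_W^{\ll^*}\oc k_i|_W=0$ pulls back, via a right inverse of this intertwiner, to show $\S_i c_i N^\ll\oc k_i$ has trivial image, and then irreducibility of $\PL_E^\ll$ (the span of that combination being $K$-invariant) forces it to be zero — or more directly, one checks $\lp_c^{\ll^*}$ is injective on the relevant span after the reductions above.

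Concretely, I would organize the proof in these steps. \textbf{Step A:} record that $\PL_W^{\ll^*}$ is spanned by $\{N_W^{\ll^*}\oc k|_W:k\in\h K_W\}$ (this is the analogue for $W$ of the statement that $\PL_E^\ll$ is spanned by conical polynomials, using that $N_W^{\ll^*}$ is a highest weight vector). \textbf{Step B:} prove the intertwining identity: for $k\in\h K_W$,
$$\lp_c^{\ll^*}(N^\ll\oc k)=c_0\,(N_W^{\ll^*}\oc k|_W),$$
with $c_0\ne0$ independent of $k$. For $k=\exp(tZ)$, $Z\in W\b W^*$, this is Corollary \ref{E} combined with $\lp_c(N^{\ll'})=N^{\ll'}(c)=1$ and the fact that $\lo(c)=0$, so $\lp_c(N^{\h\ll^*})(w)=N^{\h\ll^*}(c+w)$ restricts correctly; the general $k\in\h K_W$ follows since $\h K_W$ is generated by such one-parameter groups and by $\h K_W$-equivariance of $\lp_c^{\ll^*}$ — indeed from $\KL_{h\lz}$-type transformation of $\lp_c$ under centralizers, or directly from the commuting-diagram computation in \er{21}. \textbf{Step C:} conclude: define $\Lt_\ll$ on the spanning set by $N_W^{\ll^*}\oc k|_W\mapsto N^\ll\oc k$; Step B shows $\lp_c^{\ll^*}\circ\Lt_\ll=c_0\cdot\mathrm{id}$ on the spanning set (rescale $c_0$ to $1$), hence $\Lt_\ll$ is forced to be injective on its image and \emph{a fortiori} any linear relation among the spanning vectors on the $W$-side is respected on the $E$-side, giving well-definedness; linearity, $\Lt_\ll N_W^{\ll^*}=N^\ll$, \er{30}, and \er{28} are then all automatic, and uniqueness holds since the spanning set determines $\Lt_\ll$.

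The main obstacle is \textbf{Step B}, establishing that $\lp_c^{\ll^*}(N^\ll\oc k)$ is a \emph{nonzero} constant multiple of $N_W^{\ll^*}\oc k|_W$ uniformly in $k\in\h K_W$ — i.e. that the normal projection followed by lowest-$K_W$-type projection really does carry the $\h K_W$-orbit of $N^\ll$ onto the $\h K_W$-orbit of $N_W^{\ll^*}$ without collapse. The nonvanishing is the delicate part: one must verify $\lp_c^{\ll^*}(N^\ll)\ne0$, which unwinds to checking that the $W$-restriction of $N^\ll$, namely $N^{\ll'}(c)\cdot N^{\h\ll^*}|_W$, has nonzero $K_W$-component of type $\ll^*$ — and by Lemma \ref{q} and the compression formula \er{29}/Corollary \ref{E} this restriction equals (a positive constant times) $N_W^{\ll^*}$, which is itself the highest weight vector of $\PL_W^{\ll^*}$ and hence certainly has nonzero $\ll^*$-component. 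Handling the passage from one-parameter subgroups to all of $\h K_W$ cleanly — so as not to need a separate surjectivity argument — requires care with the equivariance of $\lp_c$ under the part of the structure group fixing $c$, for which the diagram \er{21} and the transformation rule $\KL_\lz\oc h^*=\KL_{h\lz}$ (adapted to conical polynomials) are the right tools.
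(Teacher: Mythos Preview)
Your overall plan---define $\Lt_\ll$ on the spanning set $\{N_W^{\ll^*}\oc k|_W:k\in\h K_W\}$ by sending it to $N^\ll\oc k$, then verify well-definedness---is exactly the paper's approach. But your argument for well-definedness has the implication running backwards. Your Step B establishes $\lp_c^{\ll^*}(N^\ll\oc k)=c_0\,(N_W^{\ll^*}\oc k|_W)$, which says $\lp_c^{\ll^*}\oc T=c_0\,S$ where $T(k)=N^\ll\oc k$ and $S(k)=N_W^{\ll^*}\oc k|_W$ (viewed as linear maps from the free span on $\h K_W$). That yields $\ker T\ic\ker S$. For well-definedness of $\Lt_\ll$ you need the \emph{reverse} inclusion $\ker S\ic\ker T$: a relation $\sum_i c_i\,S(k_i)=0$ must force $\sum_i c_i\,T(k_i)=0$. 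Your Step C asserts this follows ``a fortiori'' from having a left inverse, but a left inverse of $\Lt_\ll$ gives injectivity, not well-definedness; and the appeal to irreducibility of $\PL_E^\ll$ under $K$ fails because the span $\<N^\ll\oc k:k\in\h K_W\>$ is only $\h K_W$-invariant, not $K$-invariant.

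The paper closes this gap not via $\lp_c^{\ll^*}$ but by using the compression formula \er{29} \emph{pointwise}: for $z$ in the dense open set $\lO$ where $u=P_Uz$ is invertible, one has $N^{\h\ll^*}(kz)=\lD_c(u)^{\ll_{\l+1}}\,N_W^{\ll^*}(k|_W\,\lo(z))$ for all $k\in\h K_W$. Thus $\sum_i c_i\,N_W^{\ll^*}\oc k_i|_W=0$ implies $\sum_i c_i\,N^{\h\ll^*}(k_iz)=0$ on $\lO$, hence on all of $E$ by density. This gives a well-defined map $\Lt_{\ll^*}:\PL_W^{\ll^*}\to\PL_E^{\h\ll^*}$, and then $\Lt_\ll\lf:=N^{\ll'}\,(\Lt_{\ll^*}\lf)$ lands in $\PL_E^\ll$. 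The detour through Propositions \ref{D}, \ref{H}, \ref{P} is unnecessary here; the compression formula alone does the job, and the cross-section and equivariance properties then follow immediately from $\lp_c N^{\ll'}=1$ and $\lp_c^{\ll^*}N^{\h\ll^*}=N_W^{\ll^*}$.
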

\begin{proof} Let $u=P_Uz$ be invertible. Any $k\in\h K_W$ satisfies $P_U(kz)=P_Uz=u$ and $\lo(kz)=k|_W\lo(z).$ It follows that
$$N^{\h\la}(kz)=\lD_c(P_U(kz))^{\la_{\l+1}}\ N_W^\la(\lo(kz))=\lD_c(u)^{\la_{\l+1}}N_W^\la(k|_W\lo(z)).$$
Therefore a relation $\S_i C_i\ N_W^\la\oc k_i|_W=0,$ for constants $C_i$ and $k_i\in\h K_W,$ implies 
$$\S_i C_i\ N^{\h\la}(k_iz)=\lD_c(u)^{\la_{\l+1}}\S_i C_i\ N_W^\la(k_i|_W(\lo(z))=0$$ 
on a dense open subset of $E$ and hence on all of $E.$ By irreducibility under $K_W,$ every $\lf\in\PL_W^\la$ has the form 
$\lf=\S_i C_i\ N_W^\la\oc k_i|_W.$ Thus there is a well-defined linear map $\Lt_\la:\PL_W^\la\to\PL_E^{\h\la}$ given by
$$\lf=\S_i C_i\ N_W^\la\oc k_i|_W\qi\Lt_\la\lf:=\S_i C_i\ N^{\h\la}\oc k_i.$$
Then
$$\Lt_\la N_W^\la=N^{\h\la}$$
and we have the invariance property
\be{28}(\Lt_\la\lf)\oc k=\Lt_\la(\lf\oc k|_W)\ee
for all $k\in\h K_W,$ as follows with
$$\lf\oc k|_W=(\S_i C_i\ N_W^\la\oc k_i|_W)\oc k|_W=\S_i C_i\ N_W^\la\oc(k_i|_W k|_W)=\S_i C_i\ N_W^\la\oc(k_ik)|_W$$
and
$$(\Lt_\la\lf)\oc k=\S_i C_i\ (N^{\h\la}\oc k_i)\oc k=\S_i C_i\ N^{\h\la}\oc(k_i k).$$
Since $\lp_c^\la N^{\h\la}=N_W^\la$ this also yields the cross-section property
\be{30}\lp_c^\la(\Lt_\la\lf)=\lf\ee
for all $\lf\in\PL_W^\la.$ Since $N^{\ll'}$ is invariant under $\h K_W$ and $\lp_c N^{\ll'}=1,$ the required cross-section 
$\Lt_\ll$ is defined by
$$\Lt_\ll\lf:=N^{\ll'}(\Lt_{\ll^*}\lf).$$
\end{proof}

{\bf Step 5} completes the proof of Theorem \ref{w}.

\begin{proposition}\label{E} $\PL_E^\ll$ is spanned by terms
$$(Y_1^\dl\:Y_s^\dl X_1^\dl\:X_t^\dl N^\ll)\oc k$$
where $s,t\ge 0,$ $Y_i\in U\b V^*,\ X_j\in U\b U^*$ and $k\in\h K_W.$
\end{proposition}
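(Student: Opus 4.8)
The plan is to deduce the claim directly from Proposition \ref{D}, which already gives that $\PL_E^\ll$ is spanned by the vectors $Y_1^\dl\:Y_s^\dl X_1^\dl\:X_t^\dl Z_1^\dl\:Z_r^\dl N^\ll$ with $Y_i\in U\b V^*$, $X_j\in U\b_0 U^*$ and $Z_k\in W\b_0 W^*$. What remains is to absorb the string $Z_1^\dl\:Z_r^\dl$ into a single composition $\oc k$ with $k\in\h K_W=\ <\exp W\b W^*>$, paying for it by conjugating the surviving $Y$- and $X$-derivatives; the conjugation will keep the $Y$'s in $U\b V^*$ and fix the $X$'s (which lie in $U\b_0 U^*\ic U\b U^*$), exactly what the statement permits.

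I would first isolate two formal facts. (i) Since $\h K_W=\ <\exp W\b W^*>$, the orbit span $M:=\<N^\ll\oc k:\ k\in\h K_W\>$ contains $N^\ll$ and is invariant under $f\mapsto f\oc\exp(tZ)$ for all $Z\in W\b W^*$, $t\in\Cl$; differentiating at $t=0$ (legitimate since $M$ is finite-dimensional, hence closed) it is invariant under every $Z^\dl$ with $Z\in W\b W^*$. As $W\b_0 W^*\ic W\b W^*$, each vector $Z_1^\dl\:Z_r^\dl N^\ll$ from Proposition \ref{D} lies in $M$, i.e.\ equals a finite sum $\S_i c_i\ N^\ll\oc k_i$ with $c_i\in\Cl$ and $k_i\in\h K_W$. (ii) For $A\in\h\kL$ and $k\in\h K$ one has the conjugation identity $A^\dl(f\oc k)=\bigl((kAk^{-1})^\dl f\bigr)\oc k$: evaluated at $z$, both sides equal $f'(kz)\,(kAk^{-1})(kz)$, and $kAk^{-1}$ again lies in $\h\kL=E\b E^*$.

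The one place needing care is how conjugation by $k\in\h K_W$ acts on $U\b U^*$ and $U\b V^*$. Since $W\b W^*$ annihilates the Peirce $2$-space $U=E_c^2$ (the vanishing $\{WW^*U\}=0$ from the Peirce multiplication rules, of the same kind used in Lemma \ref{C}) and maps $V=E_c^1$ and $W$ into themselves, the group $\h K_W$ fixes $U$ pointwise and preserves $V$ and $W$; it is also stable under $k\mapsto k^*$, hence under $k\mapsto k^{-*}$, because $(W\b W^*)^*=W\b W^*$. Therefore, for $k\in\h K_W$,
$$k(u_1\b u_2^*)k^{-1}=(ku_1)\b(k^{-*}u_2)^*=u_1\b u_2^*,\q k(u\b v^*)k^{-1}=u\b(k^{-*}v)^*\in U\b V^*,$$
so conjugation by $k$ is the identity on $U\b U^*$ and stabilizes $U\b V^*$.

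Assembling the argument: in a spanning vector of Proposition \ref{D}, rewrite $Z_1^\dl\:Z_r^\dl N^\ll$ by (i) and move each $k_i$ to the right through the $Y$- and $X$-derivatives by iterating (ii). This gives
$$Y_1^\dl\:Y_s^\dl X_1^\dl\:X_t^\dl Z_1^\dl\:Z_r^\dl N^\ll=\S_i c_i\,\bigl((k_iY_1k_i^{-1})^\dl\:(k_iY_sk_i^{-1})^\dl X_1^\dl\:X_t^\dl N^\ll\bigr)\oc k_i,$$
where $k_iY_jk_i^{-1}\in U\b V^*$ and the $X_j\in U\b_0 U^*\ic U\b U^*$ are unchanged, so every summand has the asserted form; hence $\PL_E^\ll$ is contained in the span of those terms. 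The reverse inclusion is immediate, as $\PL_E^\ll$ is an irreducible (hence $\h K$- and $\h\kL$-stable) submodule of $\PL_E$, so it is preserved by each $A^\dl$ ($A\in\h\kL$) and each $\oc k$ ($k\in\h K$), whence $(Y_1^\dl\:Y_s^\dl X_1^\dl\:X_t^\dl N^\ll)\oc k\in\PL_E^\ll$. The only genuine work is the bookkeeping in the third paragraph; beyond Proposition \ref{D} nothing new is needed.
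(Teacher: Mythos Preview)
Your proof is correct, and the key step (i)---that the $\h K_W$-orbit span $M=\<N^\ll\oc k:\ k\in\h K_W\>\ic\PL_E^\ll$ is automatically invariant under every $Z^\dl$ with $Z\in W\b W^*$, by differentiating the group action in a finite-dimensional space---is a genuine simplification over the paper's argument. The paper instead passes to $\PL_W^{\ll^*}$: it uses irreducibility of $\PL_W^{\ll^*}$ under $\h K_W$ to write $Z_1|_W^\dl\:Z_r|_W^\dl N_W^{\ll^*}$ as a combination of translates $N_W^{\ll^*}\oc k|_W$, and then lifts back to $\PL_E^\ll$ via the cross-section $\Lt_\ll$ of Proposition~\ref{U}, using its equivariance $\Lt_\ll(\lf\oc k|_W)=(\Lt_\ll\lf)\oc k$ and the intertwining $\Lt_\ll\oc Z|_W^\dl=Z^\dl\oc\Lt_\ll$. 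Your route avoids Proposition~\ref{U} entirely and needs no irreducibility statement; the paper's detour, on the other hand, ties the spanning set explicitly to the image of $\Lt_\ll$, which is what is actually exploited later in Lemma~\ref{Q} (the identification $\<N^\ll\oc\h K_W\>=\Lt_\ll\PL_W^{\ll^*}$). The concluding conjugation bookkeeping---$k(u\b v^*)k^{-1}\in U\b V^*$ and $k(u_1\b u_2^*)k^{-1}=u_1\b u_2^*$ for $k\in\h K_W$---is the same in both proofs; you spell it out more carefully than the paper does, correctly invoking $\{WW^*U\}=0$, $\{WW^*V\}\ic V$ and the $*$-stability of $W\b W^*$ to ensure that $k^{-*}$ also fixes $U$ and preserves $V$.
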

\begin{proof} By Proposition \ref{D} it suffices to consider terms
$$Y_1^\dl\:Y_s^\dl X_1^\dl\:X_t^\dl Z_1^\dl\:Z_r^\dl N^\ll$$
where $s,t,r\ge 0$ and $Y_i\in U\b V^*,\ X_j\in U\b_0 U^*$ and $Z_k\in W\b_0 W^*.$ Since $\h K_W$ acts irreducibly on 
$\PL_W^{\ll^*}$ it follows that
$$Z_1|_W^\dl\:Z_r|_W^\dl N_W^{\ll^*}\in\ <N_W^{\ll^*}\oc k|_W:\ k\in\h K_W>.$$
The cross-section $\Lt_\ll$ defined in Proposition \ref{U} satisfies $\Lt_\ll N_W^{\ll^*}=N^\ll,\ \Lt_\ll\oc Z|_W^\dl=Z^\dl\oc\Lt_\ll$ and $\Lt_\ll(\lf\oc k|_W)=(\Lt_\ll\lf)\oc k.$ Therefore
$$Z_1^\dl\:Z_r^\dl N^\ll=Z_1^\dl\:Z_r^\dl(\Lt_\ll N_W^{\ll^*})=\Lt_\ll(Z_1|_W^\dl\:Z_r|_W^\dl N_W^{\ll^*})$$ 
is a linear combination of $\Lt_\ll(N_W^{\ll^*}\oc k|_W)=(\Lt_\ll N_W^{\ll^*})\oc k=N^\ll\oc k$ for $k\in\h K_W.$ The proof is concluded by noting that for $k\in\h K_W$ we have
$$X^\dl(f\oc k)=(X^\dl f)\oc k$$
if $X\in U\b U^*$ and
$$Y^\dl(f\oc k)=((kYk^{-1})^\dl f)\oc k$$
if $Y\in U\b V^*,$ with $kYk^{-1}\in U\b V^*.$ This follows from
$$Y^\dl(f\oc k)(z)=(f\oc k)'(z)Yz=f'(kz)kYz=f'(kz)(kYk^{-^1})kz=((kYk^{-1})^\dl f)(kz).$$
\end{proof}
 
\begin{lemma}\label{Q} 
$$\PL_E^\ll\ic\<N^\ll\oc\h K_W\> +\ML_c J^\ll=\Lt_\ll\PL_W^{\ll^*}+\ML_c J^\ll.$$
\end{lemma}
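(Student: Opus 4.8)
The plan is to show the two containments and the equality separately, drawing on the spanning set just established in Proposition~\ref{E}. First I would recall that Proposition~\ref{E} says $\PL_E^\ll$ is spanned by the vectors
$$(Y_1^\dl\:Y_s^\dl X_1^\dl\:X_t^\dl N^\ll)\oc k,\q Y_i\in U\b V^*,\ X_j\in U\b U^*,\ k\in\h K_W,$$
where (after splitting off the trace part) the $X_j$ may be taken in $U\b_0 U^*$ plus a multiple of the identity; the identity contributes only a scalar multiple of $N^\ll$, so without loss of generality $X_j\in U\b_0 U^*$. The strategy is to run the differential operators from the inside out: Proposition~\ref{H} tells us $X_1^\dl\:X_t^\dl N^\ll\eq C\,N^\ll\pmod{\ML_c J^\ll}$ for some constant $C$, and then Proposition~\ref{P} (together with the derivation property of each $Y_i^\dl$ and the fact that $\ML_c J^\ll$ is stable under multiplication by polynomials and under the $Y_i^\dl$ acting as derivations on $J^\ll$) kills each remaining $Y_i^\dl$ modulo $\ML_c J^\ll$. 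Hence every spanning vector of $\PL_E^\ll$ lies in $\Cl\,N^\ll\oc k+\ML_c J^\ll\ic\<N^\ll\oc\h K_W\>+\ML_c J^\ll$, giving the first inclusion.

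The middle term equals $\Lt_\ll\PL_W^{\ll^*}+\ML_c J^\ll$ by Proposition~\ref{U}: by construction $\Lt_\ll N_W^{\ll^*}=N^\ll$ and $\Lt_\ll(\lf\oc k|_W)=(\Lt_\ll\lf)\oc k$, so the linear span $\<N^\ll\oc k:\ k\in\h K_W\>$ is exactly $\Lt_\ll$ applied to $\<N_W^{\ll^*}\oc k|_W:\ k\in\h K_W\>$, and the latter is all of $\PL_W^{\ll^*}$ by $K_W$-irreducibility. This is just an identification of sets, requiring no new computation.

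For the reverse inclusion $\Lt_\ll\PL_W^{\ll^*}+\ML_c J^\ll\ic\PL_E^\ll+\ML_c J^\ll$ — which is all we need, since the lemma asserts $\PL_E^\ll\ic(\text{RHS})$ and the RHS is then visibly contained in $\PL_E^\ll+\ML_c J^\ll$ — one notes $\Lt_\ll\lf\in\PL_E^\ll$ for every $\lf\in\PL_W^{\ll^*}$ directly from the definition of $\Lt_\ll$ in Proposition~\ref{U} (it is built from $N^\ll\oc k$, $k\in\h K_W$, each of which lies in $\PL_E^\ll$, times the $\h K_W$-invariant factor $N^{\ll'}$, which is absorbed since $N^{\ll'}N^{\h\ll^*}=N^\ll$). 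Thus the three expressions coincide.

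The one point that needs a little care — and what I expect to be the only real obstacle — is bookkeeping the "modulo $\ML_c J^\ll$" reductions through the nested derivations: one must check that after replacing $X_1^\dl\:X_t^\dl N^\ll$ by $C\,N^\ll+h$ with $h\in\ML_c J^\ll$ (Proposition~\ref{H}), applying the outer operators $Y_1^\dl\:Y_s^\dl$ keeps $Y_1^\dl\:Y_s^\dl h$ inside $\ML_c J^\ll$. This is where one uses that each $Y_i\in U\b V^*$ satisfies $Y_ic=0$ (Lemma~\ref{I}), so $Y_i^\dl$ maps $\ML_c$ into $\ML_c$, hence maps $\ML_c J^\ll$ into $\ML_c J^\ll+\ML_c(Y_i^\dl J^\ll)\ic\ML_c J^\ll$ (using that $Y_i^\dl$ preserves the ideal $J^\ll$, since $J^\ll$ is $\h K$-invariant by Theorem~\ref{i}). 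Once this is observed, the proof is a routine assembly of Propositions~\ref{H},~\ref{P},~\ref{E} and~\ref{U}.
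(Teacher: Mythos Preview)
Your proposal is correct and follows essentially the same route as the paper: reduce via the spanning set of Proposition~\ref{E}, apply Proposition~\ref{H} to collapse the $X$-string to $C\,N^\ll$ modulo $\ML_c J^\ll$, and then use Proposition~\ref{P} together with $Yc=0$ (Lemma~\ref{I}) to absorb the $Y$-string into $\ML_c J^\ll$. Your explicit remark that $Y^\dl J^\ll\ic J^\ll$ by $\h K$-invariance is a useful clarification the paper leaves implicit; conversely, your paragraph on a ``reverse inclusion'' and the claim that ``the three expressions coincide'' is superfluous---the lemma only asserts the inclusion $\PL_E^\ll\ic\text{RHS}$ together with the identification $\<N^\ll\oc\h K_W\>=\Lt_\ll\PL_W^{\ll^*}$, both of which you have already established.
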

\begin{proof} Since $\ML_c\oc\h K_W=\ML_c$ it suffices by Proposition \ref{E} to show that 
\be{45}g:=Y_1^\dl\:Y_s^\dl X_1^\dl\:X_t^\dl N^\ll\in\<N^\ll\>+\ML_c J^\ll,\ee 
where $s,t\ge 0$ and $Y_i\in U\b V^*,\ X_j\in U\b U^*.$ By Lemma \ref{H} there exist a constant $C$ and $h\in\ML_c J^\ll$ such that $X_1^\dl\:X_t^\dl N^\ll=C N^\ll+h.$ This proves \er{45} if $s=0.$ Now let $s\ge 1.$ Proposition \ref{P} implies $Y^\dl N^\ll\in\ML_c J^\ll$ for all $Y\in U\b V^*.$ With $Y^\dl(pq)=(Y^\dl p)q+p(Y^\dl q)$ we also have 
$$Y^\dl(\ML_c J^\ll)\ic\ML_c J^\ll$$
since $Yc=0$ implies $(Y^\dl p)(c)=p'(c)Yc=0$ for all $p\in\PL_E$ and hence $Y^\dl\PL_E\ic\ML_c.$ Therefore 
$Y^\dl h\in\ML_c J^\ll$ and hence $g=Y_1^\dl\:Y_s^\dl(C N^\ll+h)\in\ML_c J^\ll.$
\end{proof}

\begin{proposition}\label{R} $\x{ker}(\lp_c^{\ll^*})\ic\ML_c J^\ll.$
\end{proposition}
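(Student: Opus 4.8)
The plan is to deduce Proposition \ref{R} quickly from three facts already in place: the spanning statement of Lemma \ref{Q}, the cross-section $\Lt_\ll$ of Proposition \ref{U}, and the inclusion $\ML_c J^\ll\ic\ker(\lp_c^{\ll^*})$ of Proposition \ref{y}. The only extra ingredient needed is one elementary observation about the shape of $J^\ll$ relative to the point $c$, and once that is isolated the argument is purely formal.

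First I would record that $\PL_E=\Cl\.1+\ML_c$, since every polynomial $p$ is the sum of the constant $p(c)$ and the polynomial $p-p(c)\in\ML_c$. As $J^\ll$ is by definition the ideal generated by $\PL_E^\ll$, i.e. $J^\ll=\PL_E\.\PL_E^\ll$ (the linear span of all products $pq$ with $p\in\PL_E$ and $q\in\PL_E^\ll$), splitting each $p$ as above and using that $\ML_c$ is an ideal gives $J^\ll=\PL_E^\ll+\ML_c\.\PL_E^\ll$, whence
$$J^\ll=\PL_E^\ll+\ML_c J^\ll .$$
Feeding in Lemma \ref{Q}, namely $\PL_E^\ll\ic\Lt_\ll\PL_W^{\ll^*}+\ML_c J^\ll$, and noting $\Lt_\ll\PL_W^{\ll^*}\ic\PL_E^\ll\ic J^\ll$, this upgrades to the decomposition
$$J^\ll=\Lt_\ll\PL_W^{\ll^*}+\ML_c J^\ll .$$

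Given this, the proof of the Proposition is immediate. Let $f\in\ker(\lp_c^{\ll^*})$ and write $f=\Lt_\ll\lf+h$ with $\lf\in\PL_W^{\ll^*}$ and $h\in\ML_c J^\ll$. Applying $\lp_c^{\ll^*}$ and using the cross-section identity $\lp_c^{\ll^*}(\Lt_\ll\lf)=\lf$ of Proposition \ref{U} together with $\lp_c^{\ll^*}h=0$ from Proposition \ref{y}, I get $0=\lp_c^{\ll^*}f=\lf$, so $\lf=0$ and $f=h\in\ML_c J^\ll$, as claimed.

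I do not expect a real obstacle inside Proposition \ref{R} itself: all the hard work is already absorbed into Lemma \ref{Q} (and hence into Steps 1--4 of the proof of Theorem \ref{w}), and the only mildly nonobvious point here is the identity $J^\ll=\PL_E^\ll+\ML_c J^\ll$, which is a one-line consequence of $\PL_E=\Cl\.1+\ML_c$. Combined with Proposition \ref{y} this yields $\ker(\lp_c^{\ll^*})=\ML_c J^\ll$, and together with the surjectivity of $\lp_c^{\ll^*}$ (which follows from the existence of the section $\Lt_\ll$) it finishes the proof of Theorem \ref{w}: $\y J_c^\ll=J^\ll/\ML_c J^\ll\xrightarrow[\ap]{\lp_c^{\ll^*}}\PL_W^{\ll^*}$.
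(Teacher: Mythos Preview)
Your proof is correct and follows essentially the same route as the paper: both arguments combine Lemma~\ref{Q}, the cross-section $\Lt_\ll$ of Proposition~\ref{U}, and Proposition~\ref{y} to reduce any $f\in\ker(\lp_c^{\ll^*})$ to an element of $\ML_c J^\ll$. Your presentation is in fact somewhat cleaner: by first isolating the elementary identity $J^\ll=\PL_E^\ll+\ML_c J^\ll$ and then invoking Lemma~\ref{Q} you obtain the decomposition $J^\ll=\Lt_\ll\PL_W^{\ll^*}+\ML_c J^\ll$ directly, whereas the paper expands $f$ against an explicit basis $(\Lt_\ll\lf_i)\iu(p_j)$ of $\PL_E^\ll$, then uses Lemma~\ref{Q} to place each $p_j$ in $\ML_c J^\ll$ and appeals once more to Theorem~\ref{r} to kill the cross terms---work that your formulation absorbs into a single application of Proposition~\ref{y}.
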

\begin{proof} Let $(\lf_i)_{i\in I}$ be a basis of $\PL_W^{\ll^*}.$ Then $\Lt_\ll\lf_i\in\PL_E^\ll$ satisfies 
$\lp_c^{\ll^*}\Lt_\ll\lf_i=\lf_i$ and $(\Lt_\ll\lf_i)$ are linearly independent. Let $(p_j)_{j\in J}$ be a basis of 
$\ker(\lp_c^{\ll^*})\ui\PL_E^\ll.$ Then $(\Lt_\ll\lf_i)_{i\in I}\iu(p_j)_{j\in J}$ form a basis of $\PL_E^\ll.$ Now let 
$f\in\x{ker}(\lp_c^{\ll^*})\ic J^\ll.$ By definition of $J^\ll$ there exist $f_i\in\ML_c,\ a_i\in\Cl$ and $q_j\in\PL_E$ such that
\be{46}f=\S_{i\in I}(f_i+a_i)(\Lt_\ll\lf_i)+\S_{j\in J}q_jp_j=\S_{i\in I}f_i(\Lt_\ll\lf_i)+\S_{j\in J}q_jp_j
+\S_{i\in I}a_i(\Lt_\ll\lf_i).\ee
By Lemma \ref{Q} each $p_j$ can be written as $p_j=\Lt_\ll\lq_j+h_j,$ where $\lq_j\in\PL_W^{\ll^*}$ and $h_j\in\ML_c J^\ll.$ Since $p_j\in\ker(\lp_c^{\ll^*})$ and $\ML_c J^\ll\ic\ker(\lp_c^{\ll^*})$ by Proposition \ref{y} it follows that
$$0=\lp_c^{\ll^*}p_j=\lp_c^{\ll^*}(\Lt_\ll\lq_j)+\lp_c^{\ll^*}h_j=\lp_c^{\ll^*}(\Lt_\ll\lq_j)=\lq_j.$$
Therefore $p_j=h_j\in\ML_c J^\ll.$ By Theorem \ref{r} $\lp_c\Lt_\ll\lf_i$ has only signatures $\ge\ll^*.$ Since $\lp_c f_i$ vanishes at $0\in W$, it follows that $(\lp_c f_i)(\lp_c\Lt_\ll\lf_i)$ has only signatures $>\ll^*.$ The same holds for $\lp_c p_j$ and hence for $(\lp_cq_j)(\lp_cp_j)$ since $p_j\in\x{ker}(\lp_c^{\ll^*}).$ With $\lp_c^{\ll^*}\Lt_\ll\lf_i=\lf_i$ it follows from \er{46} that
$$0=\lp_c^{\ll^*}f=\S_{i\in I}\((\lp_c f_i)(\lp_c\Lt_\ll\lf_i)\)^{\ll^*}
+\S_{j\in J}\((\lp_c q_j)(\lp_c p_j)\)^{\ll^*}+\S_{i\in I}a_i(\lp_c^{\ll^*}\Lt_\ll\lf_i)=\S_{i\in I}a_i\lf_i.$$
Since $(\lf_i)_{i\in I}$ are linearly independent, we have $a_i=0$ for all $i\in I.$ With \er{46} we obtain $f\in\ML_c J^\ll$ since $\Lt_\ll\lf_i\in J^\ll$ and $p_j\in\ML_c J^\ll.$ 
\end{proof}

\end{document}